\documentclass[12pt,reqno]{amsart} 

\usepackage{float} 


\usepackage{epsfig,amssymb,amsmath,version}
\usepackage{amssymb,version,graphicx,fancybox,mathrsfs}

\usepackage{url,hyperref}
\usepackage{subfigure}
\usepackage{color}
\usepackage{stmaryrd}
\usepackage{multirow}
\usepackage{booktabs,siunitx}
\usepackage{booktabs}
\usepackage{multicol,epstopdf}
\usepackage{xypic}
\usepackage[]{graphicx}
\usepackage[all]{xy}
\usepackage{tikz,ifthen}
\usetikzlibrary{positioning, math, decorations.markings, arrows.meta}
\usetikzlibrary{calc,shapes,cd}
\usetikzlibrary{knots} 
\usepgfmodule{decorations}
\allowdisplaybreaks
\setlength{\columnsep}{0.1cm}

\tikzset{knotarrow/.pic={ \draw[edge, <-] (0,0) -- +(-.001,0);}}

\tikzset{edge/.style={line width=0.8}}
\tikzset{wall/.style={very thick}}

\tikzset{->-/.style n args={2}{decoration={markings, mark=at position #1 with {\arrow{#2}}}, postaction={decorate}}} 

\ExplSyntaxOn
\cs_new_eq:NN \ifstreqF \str_if_eq:nnF
\cs_new_eq:NN \ifstreqTF \str_if_eq:nnTF
\ExplSyntaxOff

\tikzset{-o-/.code 2 args={\ifstreqF{#2}{} 
{\ifstreqTF{#2}{>}
   {\pgfkeysalso{decoration={markings,mark=at position #1 with {\arrow[scale=0.8]{#2}}}
                    ,postaction={decorate}}
    }
   {\ifstreqTF{#2}{<}
       {\pgfkeysalso{decoration={markings,mark=at position #1 with {\arrow[scale=0.8]{#2}}}
                    ,postaction={decorate}}
        }
       {\pgfkeysalso{decoration={markings,
                    mark=at position #1 with
                    {\draw[black, fill={#2}] circle[radius=2pt];}}
                    ,postaction={decorate}}
        }
     }
  }}}

\textwidth=15.4cm
\textheight=21.6cm
\setlength{\oddsidemargin}{0.9cm}
\setlength{\evensidemargin}{0.9cm}

\allowdisplaybreaks[4]

\newtheorem{theorem}{Theorem}[section]
\newtheorem{lemma}[theorem]{Lemma}

\newtheorem{corollary}[theorem]{Corollary}

\newtheorem{rem}[theorem]{Remark}
\newtheorem{conjecture}[theorem]{Conjecture}

\definecolor{ligreen}{rgb}{0.0, 0.3, 0.0}

\definecolor{darkblue}{rgb}{0.0, 0.0, 0.55}

\definecolor{anti-flashwhite}{rgb}{0.55, 0.57, 0.68}

\def\cP{\mathcal P}

\def\pr{\mathrm{pr}}

\def\ot{\otimes}
\def\buu{{\mathbf u}}


\def\cR{\mathcal R}

\def\MN {(M,\mathcal{N})}
\def\cN {\mathcal{N}}

\newcommand{\beq}{\begin{equation}}
\newcommand{\eeq}{\end{equation}}


\graphicspath{{./Figures/} } 

\begin{document}

\title{Stated $SL_n$-skein modules, roots of unity, and TQFT}

\author[Zhihao Wang]{Zhihao Wang}
\address{Zhihao Wang, School of Physical and Mathematical Sciences, Nanyang Technological University, 21 Nanyang Link Singapore 637371}
\email{ZHIHAO003@e.ntu.edu.sg}
\address{University of Groningen, Bernoulli Institute, 9700 AK Groningen, The Netherlands}
\email{wang.zhihao@rug.nl}

\keywords{Skein theory, $SL_n$, roots of unity, spin structure, splitting map, TQFT}

 \maketitle


\begin{abstract}

For a pb surface $\Sigma$, two positive integers $m,n$ with $m\mid n$, and two invertible elements $v,\epsilon$ in a commutative domain $R$ satisfying $\epsilon^{2m} = 1$,  
we construct an $R$-linear isomorphism between the stated $SL_n$-skein algebras $S_n(\Sigma,v)$ and $S_n(\Sigma,\epsilon v)$, which restricts to an algebra isomorphism between subalgebras of $S_n(\Sigma,v)$ and $S_n(\Sigma,\epsilon v)$.  
Using this linear isomorphism, we prove that the splitting map $\Theta_{c}:S_n(\Sigma,v)\rightarrow S_n(\text{Cut}_c(\Sigma),v)$ for the pb surface $\Sigma$ and the ideal arc $c$ is injective when $v^{2m} = 1$ and $m\mid n$.  

We generalize Barrett's work \cite{barrett1999skein} to the $SL_n$-skein space and the stated $SL_n$-skein space.  
As an application, we prove that the splitting map for marked 3-manifolds is always injective when the quantum parameter $v=-1$.  

Let $(M,\mathcal{N})$ be a connected marked 3-manifold with $\mathcal{N}\neq\emptyset$, and let $(M,\mathcal{N}')$ be obtained from $(M,\mathcal{N})$ by adding an extra marking. When $v^4 =1$, we prove that the $R$-linear map from $S_n(M,\mathcal{N},v)$ to $S_n(M,\mathcal{N}',v)$ induced by the embedding $(M,\mathcal{N})\rightarrow (M,\mathcal{N}')$ is injective and  
\[
S_n(M,\mathcal{N}',v) \simeq S_n(M,\mathcal{N},v)\otimes_{R} O_{q_v}(SL_n),
\]
where $O_{q_v}(SL_n)$ is the quantization of the coordinate ring of $SL_n$. This result confirms that the splitting map for  
$S_n(M,\mathcal{N},v)$ is always injective.  

We formulate the stated $SL_n$-TQFT theory, which generalizes the stated $SL_2$-TQFT theory developed in \cite{CL2022TQFT}.



\end{abstract}

\tableofcontents{}

\def \M {M,\mathcal{N}}

\section{Introduction}
In this paper, we will work with a commutative domain $R$ with an invertible element $v$, working as the quantum parameter. When we talk about a 3-manifold, we always mean an oriented  3-manifold with  a chosen
Riemannian metric.

 A {\bf marked  3-manifold} is  a pair $\MN$, where $M$ is a   3-manifold, and $\cN$ is a submanifold of $\partial M$ consisting of oriented open intervals such that there is no intersection between the closure of any two components of $\cN$. 
  Note that we allow $\cN$ to be empty.

  For a marked 3-manifold $\MN$ and a positive integer $n$, L{\^e} and Sikora defined the stated $SL_n$-skein module $S_n(\M,v)$ of $\MN$ \cite{le2021stated}, which is
  an $R$-module and quantized by the quantum parameter $v$, please refer to subsection \ref{sub111} for the definition of $S_n(\M,v)$. The classical limit of the stated $SL_n$-skein module is well-studied in \cite{S2001SLn,wang2023stated}.
  The stated $SL_n$-skein module is
   the stated version for the $SU_n$-skein theory defined by Sikora \cite{sikora2005skein}, and is also a generalization for the classical stated skein module \cite{le2018triangular}.

A {\bf punctured bordered surface} (or a {\bf pb surface}) is obtained from a compact surface by removing a finite set of points such that every boundary component is an open interval.
  
  When a marked 3-manifold $\MN$ is the thickening of a pb surface $\Sigma$, the stated $SL_n$-skein module $S_n(\M,v)$ has an algebra structure given by stacking the stated $n$-webs.
  So when $\MN$ is the thickening of a pb surface $\Sigma$, we will call the stated $SL_n$-skein module of $\MN$ as the stated  
    $SL_n$-skein algebra of $\Sigma$, denoted as $S_n(\Sigma,v)$.
  
  The stated $SL_n$-skein theory plays a key role in establishing the quantum trace map, an algebra embedding from the (stated) $SL_n$-skein algebra into a quantum torus \cite{bonahon2011quantum,le2018triangular,leY}. This map is instrumental in understanding the center and representation theory of the (stated) $SL_n$-skein algebra \cite{representation2,unicity,yu2023center}.

  For a pb surface $\Sigma$, positive  integers $m,n$ with $m\mid n$, and an element $\epsilon\in R$ with $\epsilon^{2m} = 1$, we construct an $R$-linear isomorphism $\varphi_{\epsilon}:S_n(\Sigma,v)\rightarrow S_n(\Sigma,\epsilon v)$, which restricts to an algebra isomorphism from a subalgebra  of $S_n(\Sigma,v)$ to a subalgebra of $S_n(\Sigma,\epsilon v)$.

\begin{theorem}\label{1}
	Let $\Sigma$ be a pb surface, let $m,n$ be two positive integers with $m\mid n$, and let $\epsilon\in R$ with $\epsilon^{2m} = 1$. Then there exists an $R$-linear isomorphism $\varphi_{\epsilon}:S_n(\Sigma,v)\rightarrow S_n(\Sigma,\epsilon v)$. 
\end{theorem}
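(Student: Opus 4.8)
The plan is to realize $\varphi_{\epsilon}$ as an explicit rescaling of the basis of stated $n$-webs. Recall from L\^e--Sikora that $S_n(\Sigma,v)$ is the quotient of the free $R$-module on isotopy classes of stated $n$-webs in $\Sigma\times(0,1)$ by the defining skein relations (crossing resolutions, the twist/framing relation, removal of trivial components, and the boundary state relations), and that the underlying set of generators does not depend on $v$. Thus $S_n(\Sigma,v)$ and $S_n(\Sigma,\epsilon v)$ are quotients of the \emph{same} free module, and the relation coefficients are, up to $v$-independent scalars, monomials $a_i v^{d_i}$. Given an integer-valued weight $w$ on stated $n$-webs, I would set $\tilde\varphi(\alpha)=\epsilon^{w(\alpha)}\alpha$ on the free module and demand that $\tilde\varphi$ carry the relation submodule of $S_n(\Sigma,v)$ into that of $S_n(\Sigma,\epsilon v)$. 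For a relation $\alpha_0=\sum_i a_i v^{d_i}\alpha_i$ this amounts exactly to the congruences
\[
d_i\equiv w(\alpha_i)-w(\alpha_0)\pmod{2m}\qquad\text{for all }i,
\]
since $\epsilon^{2m}=1$ forces $a_i(\epsilon v)^{d_i}=\epsilon^{d_i}a_iv^{d_i}$. So the whole statement reduces to producing a single weight $w$ that simultaneously matches the $v$-exponents of every defining relation modulo $2m$.

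To build $w$ I would combine three pieces. Because $\Sigma$ is a pb surface it is homotopy equivalent to a graph, so $T\Sigma$ is trivial; fixing a nowhere-zero vector field assigns a rotation number $\mathrm{rot}(e)\in\BZ$ to each oriented edge $e$ of a web. The first piece of $w$ is $\sum_e \theta_{c(e)}\,\mathrm{rot}(e)$, where $c(e)\in\{1,\dots,n-1\}$ is the color of $e$ and $\theta_k\in\BZ$ is the twist coefficient of the $k$-th fundamental representation (normalized so that a positive kink on a color-$k$ edge contributes $v^{\theta_k}$); this piece is designed to absorb the framing/twist relation. The second piece records a signed crossing count, weighted by the colors of the two strands, so as to absorb the $R$-matrix resolution relations, and the third piece is a contribution from the boundary states so as to absorb the state-exchange relations. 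Each piece is an isotopy invariant, so $w$ descends to isotopy classes.

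The main obstacle is verifying the congruences above for the crossing, trivalent-vertex, and boundary-state relations simultaneously, and this is where the hypotheses $m\mid n$ and $\epsilon^{2m}=1$ enter. The coefficients of these relations reflect the $\BZ/n$ center of $SL_n$: the twist coefficients and the state-exchange coefficients a priori involve $\tfrac1n$-powers, and correspondingly $S_n(\Sigma,v)$ carries a $\BZ/n$ grading by total color/charge. The hypothesis $m\mid n$ makes this grading compatible with reduction modulo $2m$ --- concretely $\epsilon^{2n}=(\epsilon^{2m})^{n/m}=1$ --- so that the fractional ambiguities drop out and each residue $d_i\bmod 2m$ is governed exactly by the combinatorial change $w(\alpha_i)-w(\alpha_0)$. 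I expect the vertex and state relations to be the delicate cases, since there the color/charge jumps and one must check that the three pieces of $w$ conspire to give the correct residue. Once this bookkeeping is complete, $\tilde\varphi$ descends to an $R$-linear map $\varphi_{\epsilon}:S_n(\Sigma,v)\to S_n(\Sigma,\epsilon v)$.

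Finally, invertibility is immediate: on the free modules $\tilde\varphi$ is diagonal with unit entries $\epsilon^{w(\alpha)}$, and its inverse is the rescaling by $\epsilon^{-w(\alpha)}$. Since $\epsilon^{-1}$ is again a $2m$-th root of unity, this inverse is exactly $\varphi_{\epsilon^{-1}}:S_n(\Sigma,\epsilon v)\to S_n(\Sigma,v)$, which is well-defined by the same congruence check; hence $\varphi_{\epsilon}$ is an $R$-linear isomorphism. The refinement that $\varphi_{\epsilon}$ restricts to an algebra isomorphism on suitable subalgebras should then follow from checking that $w$ is additive under the stacking product on the relevant (e.g.\ grading-compatible) subalgebra.
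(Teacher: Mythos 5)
Your skeleton coincides with the paper's actual strategy: both define $\varphi_{\epsilon}$ as a diagonal rescaling $\alpha\mapsto\epsilon^{w(\alpha)}\alpha$ on a basis of (stated) $n$-web diagrams, observe that $S_n(\Sigma,v)$ and $S_n(\Sigma,\epsilon v)$ are quotients of the same free module by relations whose coefficients are signed monomials in $v$, reduce well-definedness to exponent congruences modulo the order of $\epsilon$ (using exactly your observation $\epsilon^{2n}=(\epsilon^{2m})^{n/m}=1$), and get invertibility for free from $\varphi_{\epsilon^{-1}}$. The problem is that your proposal stops precisely where the proof begins: you never exhibit the weight $w$, and you explicitly defer the verification of the congruences for the vertex relation \eqref{wzh.five} and the boundary relations \eqref{wzh.six}--\eqref{wzh.eight} (``I expect the vertex and state relations to be the delicate cases\dots Once this bookkeeping is complete\dots''). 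That bookkeeping \emph{is} the content of Theorems \ref{stated1}--\ref{c_iso}: the weight turns out to be $2w(\alpha)+\frac{e(\alpha)}{2}$ or $2w(\alpha)+t(\alpha)$ (signed crossing count plus an endpoint count), taken to the base $\epsilon$ or $-\epsilon$, and which formula works is decided by a genuine case analysis in $\epsilon^m=\pm1$ and the parities of $k=n/m$, $m$, and $\frac{k+m}{2}$, driven by how $a_v$, $c_{i,v}$, and $t_v$ transform under $v\mapsto\epsilon v$.

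Two concrete points show the deferred step is not routine for your specific ansatz. First, in the case $\epsilon^m=-1$ with $k,m$ odd and $\frac{k+m}{2}$ even, the paper's map must carry the extra sign $(-1)^{p(\alpha)}$, a contribution \emph{per sink and source}, because there $a_{\epsilon v}=-a_v$ and relation \eqref{wzh.five} cannot be balanced by crossing, rotation, and endpoint terms alone; your three pieces (edge rotation numbers, weighted crossing counts, boundary-state contributions) contain no per-vertex term, so the ansatz as stated fails in that case and must be enlarged. Second, your claim that ``each piece is an isotopy invariant'' is too strong: the signed crossing count is invariant only modulo $n$ under the five moves of Figure \ref{fg} (this is why the paper's exponent is $2w(\alpha)$ and why $\epsilon^{2n}=1$ is needed merely for the map to be well defined on isotopy classes), so the congruence check has to be run against the diagram moves as well as the skein relations. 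Minor further mismatches: the paper's $n$-webs are uncolored (all strands carry the defining representation, with $n$-valent sinks and sources), so your colored-edge weights $\theta_{c(e)}$ with $c(e)\in\{1,\dots,n-1\}$ import a different web model; and since $c_{i,\epsilon v}=\epsilon^{n-1}c_{i,v}$ is independent of the state $i$, no state-dependent boundary piece is needed --- the state-blind counts $e(\alpha)$ and $t(\alpha)$ suffice, as the paper's proof shows.
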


For a 3-manifold $M$,
the $SL_n$-skein module $S_n(M,\emptyset,v)$ is quantized by $v^2$. So we will use $S_n(M,v^2)$ to denote $S_n(M,\emptyset,v)$.   Barrett built an $R$-linear isomorphism from $S_2(M,v^2)$ to $S_2(M,-v^2)$ using a spin structure of $M$ \cite{barrett1999skein}. This linear isomorphism is an algebra isomorphism when $M$ is the thickening of an oriented surface.  In this paper, we generalize Barrett's result to all $n$.

\begin{theorem}\label{2}
Let $M$ be a 3-manifold with a spin structure. For each positive interger $n$, there exists an $R$-linear isomorphism $\Phi_{n}:S_n(M,v^2)\rightarrow S_n(M,-v^2)$.
 In particular $\Phi_n$ is an algebra isomorphism when $M$ is the thickening of an oriented surface.
\end{theorem}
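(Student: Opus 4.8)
The plan is to generalize Barrett's construction from links to $n$-webs. Since $S_n(M,\emptyset,v)$ depends only on $v^2$, passing from $S_n(M,v^2)$ to $S_n(M,-v^2)$ amounts to the substitution $v^2\mapsto -v^2$, and the first step is to record, relation by relation, how the defining relations of the stated $SL_n$-skein module respond to it. My expectation is that the coefficient and web-reduction relations (the value of a contractible loop, bigon and square removal, and the remaining local identities) are insensitive to this sign change, whereas the crossing (braiding) relations and the ribbon-twist relation acquire a sign determined by the self-framing and local crossing data of the strands involved. I would make this precise by computing, for each generator of the skein relation ideal, the exact unit of $R$ by which it is rescaled under $v^2\mapsto -v^2$.

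The second step is to convert these local sign rules into a single scalar $\lambda_s(D)\in R^\times$ attached to each framed oriented $n$-web $D\subset M$, using the spin structure $s$. The role of the spin structure is to supply a canonical framing reference along every curve in $M$: comparing the ribbon framing carried by each edge of $D$ with the spin framing yields a $\mathbb{Z}/2$-valued quantity that is intrinsic to $M$ and independent of any choice of projection, exactly as in Barrett's treatment of framed links. Setting $\Phi_n(D)=\lambda_s(D)\,D$, where $\lambda_s(D)$ packages these framing comparisons according to the first step, defines an $R$-linear automorphism of the free module on webs whose inverse multiplies by the reciprocal scalars; thus invertibility is automatic, and the entire content lies in showing that $\Phi_n$ descends to the skein quotient.

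Descent is the core of the argument: one must check that $\Phi_n$ sends each defining relation of $S_n(M,v^2)$ to the corresponding relation of $S_n(M,-v^2)$. For the sign-insensitive coefficient relations this is immediate once the first step confirms them unchanged. For the crossing relations and the twist, the decisive input is the quadratic-refinement property of the spin structure: the sign acquired when a crossing is switched or resolved is exactly the difference between $\lambda_s$ on the two sides of the relation. This is the precise point at which Barrett's mechanism operates, and because each edge of an $n$-web is, near a crossing, just a framed strand, the verification transports to the $SL_n$ setting without essential change.

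The step I expect to be the main obstacle is the behaviour at the trivalent (more generally $n$-valent) vertices, which have no counterpart in the $SL_2$ theory. I must show that $\lambda_s$ is a genuine invariant of the whole web and not merely of its edges: rotating a vertex, sliding a strand past it, and applying the vertex-antisymmetry and $I=H$ relations must each leave $\lambda_s$ consistent with $v^2\mapsto -v^2$. This requires a local constraint relating the framing contributions of the edges incident to a vertex, reflecting the fact that the colours add up and the vertex is a fixed morphism; establishing this compatibility is the new ingredient beyond Barrett. Granting it, the isomorphism follows, and when $M=\Sigma\times[0,1]$ the identity $\lambda_s(D_1D_2)=\lambda_s(D_1)\lambda_s(D_2)$ reduces to additivity of the spin-framing invariant under vertical stacking, which holds because the spin structure restricts compatibly to the two layers; this yields the asserted algebra isomorphism.
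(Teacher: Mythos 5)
Your plan reproduces the overall shape of the paper's argument---a Barrett-style rescaling $\alpha\mapsto\lambda_s(\alpha)\,\alpha$ by a spin-dependent sign, with invertibility automatic and the work concentrated in checking descent relation by relation---but the step you explicitly defer (``Granting it, the isomorphism follows'') is exactly where the entire content of the paper's proof lies, so as it stands the proposal has a genuine gap rather than a proof. The paper resolves the vertex problem concretely: it passes to Sikora's $SU_n$-presentation by based $n$-webs, pairs each sink with a source by an isotopy $H$ and expands every such pair via relation \eqref{wzh.nfour} into a sum over minimal positive braids $\sigma_+$, and attaches to a web $\alpha$ with $k$ sinks the permutation $\tau_\alpha\in\mathfrak{S}_{kn}$ recording which starting half-edge is joined to which targeting half-edge (Remark \ref{perm}). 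The map is then $\Phi_n(\alpha)=(-1)^{s(\alpha)+K(\alpha)+\ell(\tau_\alpha)}\alpha$ for $n$ even, where $K(\alpha)$ is the number of knot components, and the mechanism making the expansion coherent is the sign identity $(-1)^{K(\alpha^H_{\sigma_1,\dots,\sigma_k})}=(-1)^{K(\alpha)+\ell(\tau_\alpha)+\ell(\sigma_1)+\cdots+\ell(\sigma_k)}$, together with the fact that $(-1)^{\ell(\tau_\alpha)}$ is independent of the labeling of sinks and sources precisely because $n$ is even. None of these ingredients ($K(\alpha)$, $\tau_\alpha$, the reduction of vertices to braid resolutions) appears in your $\lambda_s$, and your proposed substitute---comparing the ribbon framing of ``each edge'' against the spin framing---is not even defined on the open edges of a web: Barrett's spin--framing comparison requires closed curves, which is why the paper instead uses L\^e--Sikora's invariant $s(\alpha)$ of the whole web and supplements it with the combinatorial terms above. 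It is not clear that the ``local constraint at a vertex'' you postulate exists in the form you imagine, and you would in any case have to discover and prove the identity displayed above to make your descent check go through at relation \eqref{wzh.nfour}.

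Two smaller inaccuracies point the same way. First, the parity dichotomy is absent: under $v^2\mapsto -v^2$ one has $q_v\mapsto(-1)^nq_v$, and for odd $n$ all of the relations \eqref{w.cross}--\eqref{w.unknot} and \eqref{wzh.nfour} are literally unchanged, so $\Phi_n$ is the identity and the spin structure plays no role; only for even $n$ is there anything to do (your step 1 would presumably reveal this, but the proposal as written treats all $n$ uniformly and attributes a role to the spin structure in all cases). Second, the moves you list at vertices (``vertex-antisymmetry and $I=H$'') belong to a different (Kuperberg-type) presentation and are not relations in the presentation used here; the only vertex relation to check is \eqref{wzh.nfour}, and checking it is done by the expansion argument sketched above, not by local framing bookkeeping.
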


We also prove the stated version for Theorem \ref{2}.

\begin{theorem}
	Let $\MN$ be a marked 3-manifold, 
	and let $n$ be a positive integer.
	There exists an $R$-linear isomorphism $\Psi_n:S_n(\M,v)\rightarrow S_n(\M,-v)$. In particular, $\Psi_n$ is an algebra isomorphism when $\MN$ is the thickening of a pb surface and $\Psi_n(\alpha) = \alpha$ for any stated $n$-web $\alpha$ in $\MN$ without endpoints, please refer to subsection \ref{sub111} for the definition of the  stated $n$-web.

\end{theorem}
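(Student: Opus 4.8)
The plan is to define $\Psi_n$ by decorating each stated $n$-web with an explicit sign that depends only on its endpoints, and then to verify compatibility with the defining relations of the stated $SL_n$-skein module. To a stated $n$-web $\alpha$ in $\MN$ with endpoints $p_1,\dots,p_k$ lying on $\cN$ I attach a class $d(\alpha)=\sum_{j}\delta(p_j)\in\BZ/2$, where $\delta(p)\in\BZ/2$ is a local weight determined only by the state and orientation of the endpoint $p$ on the marking, and set
\[
\Psi_n(\alpha)=(-1)^{d(\alpha)}\,\alpha,
\]
regarded as an element of $S_n(\M,-v)$. A web without endpoints contributes the empty sum, so $d(\alpha)=0$ and $\Psi_n(\alpha)=\alpha$, which is the normalization asserted in the statement.

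The defining relations fall into two classes. The interior relations --- isotopy, the braiding (crossing) relations, and the $n$-valent vertex relations --- are supported away from $\cN$, leave the endpoint data unchanged, and are common to the unmarked module $S_n(M,v^2)$; in particular they involve only even powers of $v$ and therefore hold verbatim after the substitution $v\mapsto -v$, while $d$ is constant along them. The remaining, boundary (stated) relations are the only ones carrying odd powers of $v$, and they do change the endpoint data. For each such relation I would check that $d$ changes by exactly the parity of the $v$-power occurring in it, so that multiplying through by $(-1)^{d}$ turns an identity valid in $S_n(\M,v)$ into the corresponding identity in $S_n(\M,-v)$. This is a local verification inside an embedded half-ball meeting $\partial M$ along $\cN$; it is precisely the computation underlying the surface isomorphism $\varphi_{-1}$ of Theorem~\ref{1} (the case $m=1$, $\epsilon=-1$), and it dictates the choice of $\delta$ for every state in $\{1,\dots,n\}$.

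Granting this, $\Psi_n$ descends to a well-defined $R$-linear map; since $d$ takes values in $\BZ/2$, the analogous rule for $-v\mapsto v$ is a two-sided inverse, so $\Psi_n$ is an isomorphism. Because $d$ depends only on the multiset of endpoint weights, it is manifestly invariant under isotopy in $M$, so the passage from pb surfaces to arbitrary marked $3$-manifolds is formal: every defining relation is supported in a half-ball, and no global choice is required. This also explains why, in contrast with the Barrett-type map $\Phi_n$ of Theorem~\ref{2}, no spin structure is needed --- the change $v\mapsto -v$ is invisible to closed webs (for $\cN=\emptyset$ it reduces to the identity of $S_n(M,v^2)$) and is carried entirely by the boundary data. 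Finally, when $\MN$ is the thickening of a pb surface, the product stacks two webs and takes the disjoint union of their endpoints, whence $d(\alpha\beta)=d(\alpha)+d(\beta)$ in $\BZ/2$ and $\Psi_n(\alpha\beta)=\Psi_n(\alpha)\Psi_n(\beta)$; thus $\Psi_n$ is an algebra isomorphism, recovering $\varphi_{-1}$.

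The step I expect to be the main obstacle is the determination and verification of the local weight $\delta$: one must exhibit a single $\BZ/2$-valued function of the endpoint state and orientation that simultaneously matches the $v$-power parities of every boundary relation of the $SL_n$-theory, including the state-exchange relations pairing a state $i$ with its conjugate $n{+}1{-}i$ and the relations governing how a vertex or a crossing meets $\cN$. Once $\delta$ is fixed on these standard half-ball configurations, the rest is bookkeeping, since all defining relations are local and $d$ is an isotopy invariant.
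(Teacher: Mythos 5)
Your strategy coincides with the paper's proof of Theorem \ref{5.1}: multiply each stated $n$-web by a sign read off from isotopy-invariant boundary data, observe that the interior relations \eqref{w.cross}--\eqref{wzh.four} involve only $q_v^{1/n}=v^2$, $t_v$ and $[n]$ and so are untouched by $v\mapsto -v$, and reduce everything to sign bookkeeping in the boundary relations \eqref{wzh.five}--\eqref{wzh.eight}, governed by $c_{i,-v}=(-1)^{n-1}c_{i,v}$ and by $a_{-v}=(-1)^k a_v$ for $n=2k+1$, $a_{-v}=(-1)^{\lambda}a_v$ for $n=2\lambda$. The surrounding scaffolding in your write-up (normalization on webs without endpoints, two-sided inverse, locality of all relations in a half-ball so that no spin structure or global choice is needed, additivity of the sign under stacking giving the algebra statement for thickenings) is correct and matches the paper.

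The genuine gap is that the decisive step is never carried out: you do not exhibit $\delta$, and you yourself flag its determination as ``the main obstacle'' --- but that determination and the verification against \eqref{wzh.five}--\eqref{wzh.eight} \emph{is} the proof; everything else is formal. Moreover your ansatz $d(\alpha)=\sum_p \delta(p)$ with $\delta(p)\in\BZ/2$ hides a real subtlety. The paper's sign for even $n$, namely $(-1)^{e(\alpha)/2}$, is \emph{not} a sum of per-endpoint weights at all (it is well defined only because $e(\alpha)\equiv n\,p(\alpha)\pmod 2$ is automatically even), and no state-independent $\delta$ can work: already for $n=2$, relation \eqref{wzh.five} creates two endpoints of the same orientation type while $a_{-v}=-a_v$, forcing an odd total weight on two endpoints of equal weight, a contradiction. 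Your ansatz is still realizable, but only through genuinely state-dependent weights, e.g.\ $\delta(p)=1$ iff the state of $p$ lies in $\{1,\dots,n/2\}$: then \eqref{wzh.six} and \eqref{wzh.seven} create or annihilate pairs with states $\{i,\bar i\}$, giving weight sum $1$ in agreement with $c_{i,-v}=-c_{i,v}$, and \eqref{wzh.five} creates each state exactly once, giving total $n/2$ in agreement with $a_{-v}=(-1)^{n/2}a_v$, while \eqref{wzh.eight} has the same endpoint states on both sides and only even powers of $v$. (For odd $n$ the constant weight $(n-1)/2$ works and recovers the paper's $(-1)^{(n-1)e(\alpha)/2}$, using $e(\alpha)\equiv p(\alpha)\pmod 2$.) Finally, appealing to $\varphi_{-1}$ of Theorem \ref{c_iso} does not discharge the verification: that theorem's proof is the same relation-checking, done there for diagrams on surfaces, and in a general marked $3$-manifold there are no diagrams or writhe --- which is exactly why the paper re-proves the boundary-relation checks directly with webs, where only $e(\alpha)$ survives as usable data.
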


Costantino and L{\^e} developed the stated $SL_2$-TQFT theory \cite{CL2022TQFT}, constructing a symmetric monoidal functor from the category of decorated cobordisms to the Morita category. For precise definitions of these categories, see Section \ref{secTQFT}. The stated TQFT theory provides potential techniques for computing the dimension of skein modules of closed 3-manifolds and for proving Witten’s Finiteness Conjecture \cite{detcherry2021infinite,gunningham2023finiteness}. While the conjecture has been proven in \cite{gunningham2023finiteness}, its generalized form for compact 3-manifolds with boundary, as proposed in \cite{detcherry2021infinite}, remains open.  
For a closed 3-manifold $M$ and a positive integer $k$, let $M_k$ denote the marked 3-manifold obtained by removing $k$ open balls from $M$ and adding a marking to each newly created spherical boundary component. As an application of the stated $SL_2$-TQFT theory, the author proved that the dimension of the representation-reduced stated skein module of $M_k$ is one \cite{wang2023representation}.

We extend the stated $SL_2$-TQFT theory to $SL_n$. Specifically, we associate a marked surface with a stated $SL_n$-skein algebra and a decorated manifold with a stated $SL_n$-skein module, which is a bimodule over two stated $SL_n$-skein algebras. For a detailed discussion, see Section \ref{secTQFT}. This assignment defines a symmetric monoidal functor from the category of decorated cobordisms to the Morita category, as established in Theorem \ref{main}. Utilizing the stated $SL_n$-TQFT theory, we prove the following theorem.

\begin{theorem}\label{4}
	Let $\MN$ be a connected marked 3-manifold with $\cN \neq \emptyset$, and let $(M, \cN')$ be another marked 3-manifold obtained from $\MN$ by adding an extra marking $e'$. Suppose $\epsilon \in R$ satisfies $\epsilon^4 = 1$. Then:  
	
	\begin{enumerate}
		\item The $R$-linear map from $S_n(\M, \epsilon)$ to $S_n(M, \cN', \epsilon)$, induced by the embedding of $\MN$ into $(M, \cN')$, is injective.  
		\item There is an isomorphism  
		$$
		S_n(M, \cN', \epsilon) \simeq S_n(\M, \epsilon) \otimes_R O_{q_{\epsilon}}(SL_n),
		$$  
		where $O_{q_{\epsilon}}(SL_n)$ is defined in subsection \ref{bigon}.
	\end{enumerate}
\end{theorem}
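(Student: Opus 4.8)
The plan is to treat part (2) as the main statement and to recover part (1) from it. The hypothesis $\cN\neq\emptyset$ is what keeps both sides quantized by the same parameter $\epsilon$ (note that the unmarked module $S_n(M,\emptyset,\epsilon)=S_n(M,\epsilon^2)$ is instead quantized by $\epsilon^2$, so without a preexisting marking the two sides of the claimed isomorphism would not even share a quantum parameter), while connectedness of $M$ together with $\cN\neq\emptyset$ provides the global room needed to push skein activity off the new marking coherently. The new marking $e'$ contributes a single extra boundary interval on which webs may end, and the skein data supported near $e'$ is governed by a local bigon picture; since the stated $SL_n$-skein algebra of a bigon is $O_{q_{\epsilon}}(SL_n)$ (subsection \ref{bigon}), this is the source of the claimed tensor factor.

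The construction itself I would phrase in the $SL_n$-TQFT/Morita language of Theorem \ref{main}. First I would define an $R$-linear gluing map $G:S_n(\M,\epsilon)\otimes_R O_{q_{\epsilon}}(SL_n)\to S_n(M,\cN',\epsilon)$ by placing a representative of the $O_{q_{\epsilon}}(SL_n)$-factor in the bigon neighbourhood of $e'$ and a representative of the $S_n(\M,\epsilon)$-factor in its complement, stacked with respect to a height function near $e'$. A defining-relations check (isotopy, the $SL_n$-skein relations, and the boundary/height-order relations at $e'$) shows that $G$ is well defined, and a standard spanning argument --- every stated $n$-web in $(M,\cN')$ can be isotoped so that all of its endpoints on $e'$ are pushed monotonically into the bigon neighbourhood --- shows that $G$ is surjective. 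The real content is to produce a two-sided inverse, i.e.\ a ``split off the bigon'' map $S_n(M,\cN',\epsilon)\to S_n(\M,\epsilon)\otimes_R O_{q_{\epsilon}}(SL_n)$; this amounts to proving that the decomposition of a web into its near-$e'$ part and its remainder is unique modulo the relations, with no relation coupling the two tensor factors.

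I expect this independence statement to be the main obstacle, and it is exactly here that $\epsilon^4=1$ is indispensable: reordering two endpoints on $e'$, or sliding a strand from the bigon neighbourhood back into $M$, introduces $R$-matrix and boundary coefficients, and the condition $\epsilon^4=1$ (so that $\epsilon^2=\pm1$ and the theory sits ``close to classical'') is precisely what forces the relevant family of $SL_n$-skein coefficients to become invertible scalars that do not entangle the $O_{q_{\epsilon}}(SL_n)$-factor with the rest, letting the bigon factor split off freely. To organize this over the four fourth-roots of unity I would prove a convenient base case and transport it using the linear isomorphisms $\Psi_n$ (relating parameter $\epsilon$ to $-\epsilon$) and $\varphi_{\epsilon}$ from the earlier theorems, after checking that they respect the bigon decomposition. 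Finally, part (1) is formal: the embedding $\MN\hookrightarrow(M,\cN')$ factors as $a\mapsto a\otimes 1$ followed by the isomorphism $G$, where $1\in O_{q_{\epsilon}}(SL_n)$ is the class of the empty web at $e'$, and since $O_{q_{\epsilon}}(SL_n)$ is a free $R$-module admitting a basis containing $1$, the map $a\mapsto a\otimes 1$ is a split injection, whence so is the embedding.
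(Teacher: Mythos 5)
Your reduction of part (1) to part (2) is fine, but your proof of part (2) has a genuine gap at its center: the ``split off the bigon'' inverse is asserted rather than constructed, and the reason you offer for it --- that $\epsilon^4=1$ makes the relevant skein coefficients into invertible scalars --- cannot be the operative mechanism, since those coefficients are invertible units for \emph{every} $v$, while the statement itself fails at generic $v$ whenever $e'$ lies on a boundary component carrying no other marking: a local splitting of the bigon factor would make $S_n(M,\cN',v)$ independent of where $e'$ is placed, contradicting the counterexamples from \cite{CL2022TQFT} that the paper points to right after Theorem \ref{t6.1}. What actually degenerates at a fourth root of unity is relation \eqref{w.cross}: $q_\epsilon^{\pm 1/n}=\pm 1$ and $q_\epsilon-q_\epsilon^{-1}=0$, so overcrossings equal undercrossings, and the height-exchange relations at a marking reduce to signs $a_{i,j},b_{i,j}$ with $a_{i,j}b_{i,\bar{j}}=1$ (equations \eqref{root} and \eqref{very}); your sketch never isolates these facts, yet they are exactly what lets strands and markings be dragged through everything. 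Your surjectivity step also does not work as stated: the endpoints on $e'$ already sit inside the bigon neighbourhood, and the obstruction is the attached strands --- an arc leaving $e'$, running through a handle of $M$, and returning to $e'$ cannot be pushed into a half-ball even after crossing changes (a homotopy-class obstruction), so webs of the form (web in the complement) $\sqcup$ (web in the bigon) do not visibly span; one needs genuine skein manipulations such as cutting strands at a marking via \eqref{wzh.six}--\eqref{wzh.seven}, which pull in the rest of $\cN$. Most tellingly, your construction never uses the hypotheses that $M$ is connected and $\cN\neq\emptyset$ beyond a remark about quantum parameters; if the bigon split off by a purely local argument at $e'$, the identical argument would settle the case $\cN=\emptyset$, which the paper explicitly leaves open (see Conjecture \ref{Conj} and the closing paragraph of the paper).

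Your fallback --- prove one base case and transport it --- is also unavailable: $\varphi_\epsilon$ is defined only for pb surfaces (it counts crossings $w(\alpha)$ of diagrams in a surface), not for general marked 3-manifolds, and $\Psi_n$ relates only $v$ to $-v$, hence $1\leftrightarrow-1$ and $i\leftrightarrow-i$; nothing transports $\epsilon=\pm1$ to $\epsilon=\pm i$, so the hardest cases would remain. The paper's route is different and supplies precisely the lemma you are missing, uniformly in $\epsilon$: fix a component $e$ of $\cN$; when $e'$ lies on the same component of $\partial M$ as $e$, it quotes Theorem 6.10 of \cite{wang2023stated} (valid for a general quantum parameter), which already gives both injectivity and the tensor decomposition; when $e'$ lies on a different component, it invokes its Theorem \ref{t6.1}, which says that at $\epsilon^4=1$ the result of adding a marking is independent of its location, compatibly with the adding-marking maps ($F\circ l_1=l_2$), so that $\mathrm{Ker}(l_{e'})=\mathrm{Ker}(l_{e''})=0$ and the decomposition transfers. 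Theorem \ref{t6.1} is in turn proved by removing a tube joining the two candidate markings, presenting both skein modules as quotients of the skein module of the complement via the gluing Theorem \ref{TQFT}, and checking that the map dragging one marking to the other along the tube matches the two kernels, with all signs cancelling by \eqref{root} and \eqref{very}. This global input --- connectedness, the pre-existing marking, and the TQFT gluing theorem --- is what replaces your local splitting claim; without an argument of comparable strength, your proposal restates the theorem rather than proving it.
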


When $\epsilon=1$,
Theorem \ref{4} is proved in \cite{wang2023stated} using a quite different and complicated technique.

L{\^e} and Sikora defined the splitting map for stated $SL_n$-skein modules and algebras, see subsections \ref{splitting} and \ref{sub27}, and  conjectured the injectivity of the splitting map for pb surfaces, Conjecture 7.12 in \cite{le2021stated} (or refer to Conjecture \ref{conj}). L{\^e} proved the injectivity of the splitting map for pb surfaces  when $n=2$ \cite{le2018triangular},  Higgins proved the case when $n=3$ \cite{higgins2020triangular}, L{\^e} and Sikora proved the Conjecture  when the pb surface is connected and has a non-empty boundary. 
The author proved the splitting map is injective for all marked 3-manifolds when the quantum parameter $v$ is $1$ \cite{wang2023stated}.

There are very few non-trivial affirmative examples of Conjecture \ref{conj} when the pb surface has no boundary and $n > 3$. The only non-trivial example known to the author is when the quantum parameter is $1$ \cite{wang2023stated}.

As  applications for the above Theorems, we prove the injectivity of the splitting map for some special cases.

\begin{theorem}
	Let $\Sigma$ be a pb surface with an ideal arc $c$, let $m,n$ be two positive intgers with $m\mid n$, and let $\epsilon\in R$ such that $\epsilon^{2m} = 1$. Then the splitting map $\Theta_{c}:S_n(\Sigma,\epsilon)\rightarrow S_n(\text{Cut}_c(\Sigma),\epsilon)$ is injective, where $\text{Cut}_c(\Sigma)$ is the pb surface obtained from $\Sigma$ by cutting along $c$, please refer to subsection \ref{sub27} for the detailed discussion.
\end{theorem}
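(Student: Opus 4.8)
The plan is to transport the already-known injectivity at quantum parameter $1$ over to the parameter $\epsilon$ by means of the linear isomorphism $\varphi_\epsilon$ produced by Theorem \ref{1}. Recall from \cite{wang2023stated} that the splitting map is injective for every marked $3$-manifold when the quantum parameter equals $1$; applying this to the thickening of a pb surface, the map $\Theta_c^{(1)}\colon S_n(\Sigma,1)\to S_n(\text{Cut}_c(\Sigma),1)$ is injective, where I write $\Theta_c^{(1)}$ and $\Theta_c^{(\epsilon)}$ for the splitting maps at parameters $1$ and $\epsilon$. Since $\epsilon^{2m}=1$ and $m\mid n$, Theorem \ref{1}, applied with $v=1$ separately to $\Sigma$ and to $\text{Cut}_c(\Sigma)$ (both of which are pb surfaces), furnishes two $R$-linear isomorphisms $\varphi_\epsilon^{\Sigma}\colon S_n(\Sigma,1)\xrightarrow{\ \sim\ }S_n(\Sigma,\epsilon)$ and $\varphi_\epsilon^{\mathrm{Cut}}\colon S_n(\text{Cut}_c(\Sigma),1)\xrightarrow{\ \sim\ }S_n(\text{Cut}_c(\Sigma),\epsilon)$.

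The heart of the argument is the commutativity of the square built from these four maps, i.e.\ the identity
\[
\Theta_c^{(\epsilon)}\circ\varphi_\epsilon^{\Sigma}=\varphi_\epsilon^{\mathrm{Cut}}\circ\Theta_c^{(1)}
\]
of $R$-linear maps $S_n(\Sigma,1)\to S_n(\text{Cut}_c(\Sigma),\epsilon)$. Granting this, the left vertical map $\varphi_\epsilon^{\Sigma}$ is invertible, so $\Theta_c^{(\epsilon)}=\varphi_\epsilon^{\mathrm{Cut}}\circ\Theta_c^{(1)}\circ(\varphi_\epsilon^{\Sigma})^{-1}$ is the composite of the injection $\Theta_c^{(1)}$ sandwiched between two bijections, hence injective. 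Note that only $R$-linearity and bijectivity of the $\varphi_\epsilon$'s are used here, not their multiplicative behavior, so the subalgebra-restriction statement of Theorem \ref{1} plays no role.

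It thus remains to establish the displayed commutation, which I expect to be the main obstacle. It suffices to check the identity on the basis of stated $n$-webs, so fix a web $\alpha$ in $\Sigma$ in general position with respect to $c$. On the one hand, $\varphi_\epsilon$ rescales $\alpha$ by a power of $\epsilon$ dictated by the explicit combinatorial quantity used to construct $\varphi_\epsilon$ in the proof of Theorem \ref{1}; on the other hand, $\Theta_c^{(1)}(\alpha)=\sum_s \alpha_s$ expands as a sum over the admissible state assignments $s$ on the two copies of $c$. The commutation then reduces to the assertion that the rescaling exponent attached to $\alpha$ in $\Sigma$ equals the exponent attached to each cut web $\alpha_s$ in $\text{Cut}_c(\Sigma)$, \emph{uniformly in the summation index} $s$, so that the common scalar factors out of the sum and the two sides agree. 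Concretely, I must verify that the exponent defining $\varphi_\epsilon$ is local with respect to cutting along $c$ and is insensitive to the interior states created at the new endpoints; this is where the precise form of $\varphi_\epsilon$ enters, and one checks that the exponent is an additive (homological) invariant preserved by the cut-and-glue operation and unchanged under varying the states on $c$. Once this locality is confirmed, the four maps assemble into a commuting square and the injectivity of $\Theta_c^{(\epsilon)}$ follows as above.
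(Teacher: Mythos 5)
Your overall strategy --- transport injectivity from $v=1$ through the explicit isomorphism $\varphi_\epsilon$ of Theorem \ref{c_iso} --- is the same one the paper uses, but the square you place at the ``heart'' of the argument,
\[
\Theta_c^{(\epsilon)}\circ\varphi_\epsilon^{\Sigma}=\varphi_\epsilon^{\mathrm{Cut}}\circ\Theta_c^{(1)},
\]
is exactly where the proof breaks, and the locality check you defer does not go through for a general ideal arc $c$. The exponent defining $\varphi_\epsilon$ is \emph{not} preserved by cutting: if $\alpha$ meets $c$ in $k$ points, then each lift $\alpha(h,s)$ acquires $2k$ new endpoints, so $e(\alpha(h,s))=e(\alpha)+2k$ and $t(\alpha(h,s))=t(\alpha)+k$, and in every case of Theorem \ref{c_iso} the exponent attached to $\alpha(h,s)$ differs from the one attached to $\alpha$ by $k$ (the quantity $k$ is uniform in $s$ but depends on $\alpha$, indeed on its isotopy representative, so it can be neither absorbed into a global scalar nor repaired by a well-defined diagonal correction when $\epsilon^2\neq 1$). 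Worse, the two copies $c_1,c_2$ carry \emph{opposite} induced boundary orientations in $\text{Cut}_c(\Sigma)$, so a lift whose new endpoints are negatively ordered along $c_1$ is positively ordered along $c_2$; re-expressing it as a negatively ordered diagram (the only diagrams on which $\varphi_\epsilon$ is defined by the stated formula) requires height exchanges via relation \eqref{wzh.eight}, which introduce crossings and state-dependent coefficients. So the equality of exponents ``uniformly in $s$'' that your argument needs is false as stated, and whether the square commutes at all is not something your sketch establishes.

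The missing idea is the reduction that makes the square trivial: by Lemma \ref{lll} (Corollary 8.2 of \cite{le2021stated}), $\mathrm{Ker}(\Theta_c)=\mathrm{Ker}(\Theta_p)$, where $\Theta_p$ cuts along a trivial ideal arc $c_p$ bounding a monogon at an ideal point $p$. For $\Theta_p$ one may isotope any stated $n$-web diagram off the monogon, so $\Theta_p(\alpha)=\alpha$ with $w,e,t,p$ all unchanged --- no new endpoints, no reordering --- and the square
\[
\Theta_p\circ\varphi_\epsilon=\varphi_\epsilon\circ\Theta_p
\]
commutes on the nose (this is Lemma \ref{key}, diagram \eqref{com}). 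Combining this with the injectivity of $\Theta_p$ at $v=1$ (Corollary 6.8 of \cite{wang2023stated} together with Lemma \ref{lll}) gives injectivity of $\Theta_p$ at $\epsilon$, and hence of $\Theta_c$ via the kernel equality. In short: your transport mechanism is right, but it must be applied to the trivial arc $c_p$ rather than to $c$ itself; without Lemma \ref{lll} your argument has a genuine gap.
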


\begin{theorem}
	Let $\MN$ be a connected marked  3-manifold with $\cN\neq\emptyset$,  let $D$ be a properly embedded disk in $M$, and let $e$ be an open oriented interval in $D$. Suppose $\epsilon\in R$ such that $\epsilon^4 = 1$. Then the splitting map
	$$\Theta_{(D,c)}:S_n(\M,\epsilon)\rightarrow S_n( \text{Cut}_{(D,e)}\MN,\epsilon)$$ is injective,   please refer to subsection \ref{splitting} for $\Theta_{(D,e)}$ and $\text{Cut}_{(D,e)}\MN$.
\end{theorem}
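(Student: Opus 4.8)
The plan is to deduce the injectivity of $\Theta_{(D,e)}$ from Theorem \ref{4} (adding a marking), since for a $3$-manifold there is no analogue of the rescaling isomorphism $\varphi_\epsilon$ that would let us transport the quantum parameter from $1$, where injectivity is known by \cite{wang2023stated}, to an arbitrary fourth root of unity; the hypothesis $\epsilon^4=1$ must therefore enter through Theorem \ref{4}, which explains why the statement requires exactly this condition together with $\MN$ connected and $\cN\neq\emptyset$. First I would fix the local model: choose a collar $D\times[-1,1]\subset M$ of the cutting disk, so that $\text{Cut}_{(D,e)}\MN$ is obtained by splitting along $D$ and recording two parallel disk faces $D_{+},D_{-}$, each carrying a copy of the oriented interval $e$. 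These two copies are new markings, so $\text{Cut}_{(D,e)}\MN$ carries precisely two markings more than $\MN$, and regluing $D_{+}$ to $D_{-}$ identifies them with the interior arc $e$ and recovers $\MN$.

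The key step is to construct a left inverse $G$ of $\Theta_{(D,e)}$ out of the structure supplied by Theorem \ref{4}. Applying Theorem \ref{4} to the cut manifold (which inherits a nonempty marking set and, in the non-separating case, remains connected) identifies $S_n(\text{Cut}_{(D,e)}\MN,\epsilon)$ as $S_n(\MN,\epsilon)$ tensored with copies of $O_{q_\epsilon}(SL_n)$ coming from the two new faces. Under this identification the state sum defining $\Theta_{(D,e)}$ is a comultiplication-type map on the $O_{q_\epsilon}(SL_n)$ factors. I would then take $G$ to be the map that caps one of the two faces with the counit element of $O_{q_\epsilon}(SL_n)$; the Hopf-algebra identity $(\varepsilon\otimes\mathrm{id})\circ\Delta=\mathrm{id}$, realized geometrically, gives $G\circ\Theta_{(D,e)}=\mathrm{id}_{S_n(\MN,\epsilon)}$, whence $\Theta_{(D,e)}$ is injective. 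When $D$ separates $M$ the target becomes a tensor product over the two pieces and the same capping argument applies to each component; a general properly embedded disk is reduced to this standard collar model by isotopy together with naturality of the splitting map under embeddings.

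The hard part will be making the left inverse $G$ well-defined. Gluing and capping are not a priori linear operations on stated skein modules --- this is precisely the phenomenon that makes the splitting conjecture subtle --- so I cannot simply invoke a telescoping of the state sum; instead $G$ must be produced through the explicit $O_{q_\epsilon}(SL_n)$-module structure furnished by Theorem \ref{4}, and it is here that $\epsilon^4=1$ is consumed, since that structural decomposition is only available for fourth roots of unity. I would therefore expect the bulk of the work to lie in verifying that the capping map descends to skein modules and that it intertwines the comultiplication-type splitting map with the counit, after which the composition identity $G\circ\Theta_{(D,e)}=\mathrm{id}$ is formal.
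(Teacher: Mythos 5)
You have correctly located where the hypotheses enter --- the paper's actual proof is exactly the combination of Theorem \ref{inj} (the adding-marking map $l_{e'}$ is injective when $\epsilon^4=1$, $\MN$ is connected and $\cN\neq\emptyset$) with one external input, Theorem 6.7 of \cite{wang2023stated}, which states that $\operatorname{Ker}(\Theta_{(D,e)})=\operatorname{Ker}(l_{e'})$ whenever the added marking $e'$ and $\partial D$ lie in the same component of $\partial M$. But the mechanism you propose in place of that bridge is wrong. Theorem \ref{4} keeps the underlying $3$-manifold fixed and peels off a marking: applied to the cut manifold $(M',\cN\cup e_1\cup e_2)$, where $M'$ is the underlying manifold of $\text{Cut}_{(D,e)}\MN$, it yields $S_n(M',\cN,\epsilon)\otimes_R O_{q_\epsilon}(SL_n)^{\otimes 2}$ --- and $M'$ is a genuinely different manifold from $M$. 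It does not identify $S_n(\text{Cut}_{(D,e)}\MN,\epsilon)$ with $S_n(\M,\epsilon)$ tensored with $O_{q_\epsilon}(SL_n)$ factors; undoing the cut is precisely what no theorem in the paper supplies as a tensor-factor decomposition, so the premise on which your left inverse $G$ rests is false.

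Moreover, the capping step cannot be made well-defined by any functorial route: embedding $M'$ back into $M$ sends the faces $D_1,D_2$ to properly embedded interior disks rather than into $\partial M$, so the new markings $e_1,e_2$ are not carried to markings and no induced map of stated skein modules arises. What regluing actually provides is Theorem \ref{TQFT}: $S_n(\M,\epsilon)$ is recovered from $S_n(\text{Cut}_{(D,e)}\MN,\epsilon)$ as the coinvariants quotient $\text{HH}_0$ by the commutators of the boundary-algebra actions, not as a direct summand, so there is no counit identity $(\varepsilon\otimes\mathrm{id})\circ\Delta=\mathrm{id}$ available here; if a natural retraction of a general splitting map existed, Conjecture \ref{conj} would follow immediately at every quantum parameter, which indicates the idea proves too much. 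The repair is to replace your capping construction by the kernel identity $\operatorname{Ker}(\Theta_{(D,e)})=\operatorname{Ker}(l_{e'})$ of Theorem 6.7 in \cite{wang2023stated}, choosing $e'$ on the boundary component containing $\partial D$ (possible since $M$ is connected), after which Theorem \ref{inj} gives $\operatorname{Ker}(l_{e'})=0$ and the statement follows, as in the paper.
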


\begin{theorem}
	Let $\MN$ be marked 3-manifold,  let $D$ be a properly embedded disk in $M$, and let $e$ be an open oriented interval in $D$.
	Then the splitting map 
	$$\Theta_{(D,c)}:S_n(\M,-1)\rightarrow S_n( \text{Cut}_{(D,e)}\MN,-1)$$ is injective.
\end{theorem}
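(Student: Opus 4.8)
The plan is to reduce the case $v=-1$ to the case $v=1$, which is already known for all marked $3$-manifolds by \cite{wang2023stated}, by transporting it through the $R$-linear isomorphism $\Psi_n$ constructed above relating the quantum parameters $v$ and $-v$. Specializing $\Psi_n\colon S_n(\M,v)\to S_n(\M,-v)$ at $v=1$ gives two isomorphisms,
\[
\Psi_n\colon S_n(\M,1)\xrightarrow{\ \sim\ }S_n(\M,-1)
\qquad\text{and}\qquad
\Psi_n\colon S_n(\text{Cut}_{(D,e)}\MN,1)\xrightarrow{\ \sim\ }S_n(\text{Cut}_{(D,e)}\MN,-1),
\]
one for the original marked manifold and one for the cut manifold. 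The heart of the argument is the commutativity of the square whose horizontal arrows are the splitting maps $\Theta_{(D,e)}$ (at $v=1$ on top and at $v=-1$ on the bottom) and whose vertical arrows are these two copies of $\Psi_n$; in symbols, I want
\[
\Psi_n\circ\Theta_{(D,e)}=\Theta_{(D,e)}\circ\Psi_n\colon S_n(\M,1)\longrightarrow S_n(\text{Cut}_{(D,e)}\MN,-1).
\]

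Granting this identity, the splitting map at $v=-1$ equals $\Psi_n\circ\Theta_{(D,e)}\circ\Psi_n^{-1}$, a composite of the $v=1$ splitting map (injective by \cite{wang2023stated}) with two isomorphisms, and is therefore injective, which is exactly the assertion. I stress that no hypothesis on the connectedness of $M$ or on the nonemptiness of $\cN$ is needed here, because both $\Psi_n$ and the $v=1$ injectivity hold in complete generality; this is what lets the present statement cover the cases excluded from the previous theorem, where $\cN\neq\emptyset$ and $M$ connected were required.

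To prove that the square commutes I would argue on stated $n$-webs. Since the modules $S_n(\M,v)$ and $S_n(\M,-v)$ share the same bulk relations---those involving only $v^2$---and differ solely in the boundary relations, which carry odd powers of $v$, the isomorphism acts by $\Psi_n(\alpha)=s(\alpha)\,\alpha$ for a sign $s(\alpha)\in\{\pm1\}$ depending only on the endpoint data of $\alpha$ on the markings, with $s(\alpha)=1$ whenever $\alpha$ has no endpoints. The splitting map sends a web $\alpha$, isotoped transverse to $D$, to the state sum $\sum_{\mathbf i}\alpha_{\mathbf i}$ over all consistent assignments $\mathbf i$ of states to the points of $\alpha\cap D$, where $\alpha_{\mathbf i}$ denotes the resulting cut web. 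At each such intersection the cut produces two endpoints lying on the two copies of $e$, carrying equal states and opposite orientations; these contribute inverse signs to $s$, so that $s(\alpha_{\mathbf i})=s(\alpha)$ for every $\mathbf i$. Comparing
\[
\Psi_n\bigl(\Theta_{(D,e)}(\alpha)\bigr)=\sum_{\mathbf i}s(\alpha_{\mathbf i})\,\alpha_{\mathbf i}
\qquad\text{against}\qquad
\Theta_{(D,e)}\bigl(\Psi_n(\alpha)\bigr)=s(\alpha)\sum_{\mathbf i}\alpha_{\mathbf i}
\]
then yields the desired equality.

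The step I expect to be the main obstacle is precisely this sign bookkeeping: one has to read off from the explicit construction of $\Psi_n$ the exact dependence of $s(\alpha)$ on the endpoint data, and then confirm that the pair of endpoints created at each point of $\alpha\cap D$ is indeed sign-neutral. The delicate feature is that in stated $SL_n$-theory such a sign may depend not only on the individual states but also on the linear order of the endpoints along each marking; since cutting interleaves the newly created endpoints with the pre-existing ones along the two copies of $e$, I would need to check carefully that this reshuffling leaves the total sign unchanged. Establishing that $s$ is genuinely compatible with the cutting operation is the technical core on which the whole reduction rests.
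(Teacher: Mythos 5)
Your reduction strategy (transport injectivity from $v=1$ to $v=-1$ through $\Psi_n$) is the right instinct, and it is essentially what the paper does—but your key commutativity claim is false for even $n$, and the paper's proof is structured precisely to avoid it. By Theorem \ref{5.1}, the sign in $\Psi_n(\alpha)$ depends \emph{only} on the total number of endpoints: it is $(-1)^{e(\alpha)/2}$ for $n$ even and $(-1)^{(n-1)e(\alpha)/2}$ for $n$ odd; it does not depend on states or on the orientations of the endpoints. Each point of $\alpha\cap D$ creates exactly two new endpoints, so $e(\alpha_{\mathbf i})=e(\alpha)+2|\alpha\cap D|$, and hence for $n$ even
\begin{equation*}
\Psi_n\bigl(\Theta_{(D,e)}(\alpha)\bigr)=(-1)^{|\alpha\cap D|}\,\Theta_{(D,e)}\bigl(\Psi_n(\alpha)\bigr),
\end{equation*}
so your square commutes only up to a sign that varies with $\alpha$. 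Your heuristic that the two endpoints born at a cut point carry ``inverse signs'' because they have equal states and opposite orientations presupposes an isomorphism whose sign factors as a product of orientation-dependent contributions per endpoint; the isomorphism actually available in the paper has no such factorization (over a general commutative domain $R$ one cannot even write $(-1)^{e/2}$ as a product of $n$-th or fourth roots of unity per endpoint). If you instead tried to build a different isomorphism with your postulated sign rule, you would have to re-verify all the boundary relations \eqref{wzh.five}--\eqref{wzh.eight} at $v=\pm1$ (for instance \eqref{wzh.six} forces the product of the two endpoint signs of a returning arc to be $c_{i,-1}/c_{i,1}=(-1)^{n-1}$, which conflicts with cut-neutrality unless the signs are allowed to depend on the state $i$), and none of this is done in your proposal. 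For $n$ odd the offending factor is $(-1)^{(n-1)|\alpha\cap D|}=1$ and your argument goes through, but the theorem is for all $n$.

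The paper sidesteps the problem by never commuting $\Psi_n$ past the splitting map. It first proves Lemma \ref{lmis}: the \emph{adding marking map} $l_e$ is injective at $v=-1$, because $l_e$ is induced by an embedding and does not alter a web at all—in particular it preserves $e(\alpha)$—so the square of Corollary \ref{minus} commutes trivially, and injectivity at $v=1$ (Corollary 6.6 in \cite{wang2023stated}) transports. It then invokes Theorem 6.7 of \cite{wang2023stated}, valid at arbitrary quantum parameter, which identifies $\mathrm{Ker}\,\Theta_{(D,e)}$ with $\mathrm{Ker}\,l_e$ when the added marking lies on the same boundary component as $\partial D$. Your approach could in principle be repaired without that theorem—e.g., by showing that $|\alpha\cap D|\bmod 2$ descends to a $\mathbb{Z}_2$-grading of the skein module (each defining relation is supported in a ball, and a $1$-cycle in a ball has trivial mod-$2$ intersection with $D$) and twisting the right-hand vertical isomorphism by this grading—but that is additional work your proposal neither identifies nor carries out, so as written the proof has a genuine gap at its central step.
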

  

{\bf Acknowledgements}:
 The research is supported by the NTU research scholarship and
 the PhD scholarship from the University of Groningen.
We wish to thank the referee most warmly for numerous suggestions that have improved the exposition
of this paper.

\def\Si{\Sigma}

\section{Preliminary}

In this section, we will recall some definitions and results about stated $SL_n$-skein modules in \cite{le2021stated}.

\subsection{Stated $SL_n$-skein modules}\label{sub111}
In this subsection, we review the definition of the stated $SL_n$-skein module from \cite{le2021stated}, which generalizes the $SL_n$-skein module in \cite{sikora2005skein} to the stated $SL_n$-skein module.

 An {\bf $n$-web} $\alpha$ in a marked  3-manifold $\MN$ is a disjoint union of oriented closed paths and a directed finite  graph properly embedded in $M$. We also have the following requirements:

(1) $\alpha$ only contains $1$-valent or $n$-valent vertices. Each $n$-valent vertex is a source or a  sink. The set of one valent vertices is denoted as $\partial \alpha$, which are  called endpoints of $\alpha$.

 (2) Every edge $e$ of the graph is an embedded oriented  closed interval  in $M$.
 We can regard $e$ as an embedding from $[0,1]$ to $M$. Then $e(0)$ (resp. $e(1)$) is called the {\bf starting point} (resp. {\bf ending pointing}).

(3) $\alpha$ is equipped with a transversal framing. 

(4) The set of half-edges at each $n$-valent vertex is equipped with a  cyclic order. 

(5) $\partial \alpha$ is contained in $\cN$ and the framing at these endpoints is given by the velocity vector of $\cN$.

An $n$-web consisting of a single oriented framed circle is called a {\bf framed knot}.

For any two points $a,b\in\partial \alpha$, we say $a$ is higher than $b$ if they belong to a same component $e$ of $\cN$ and the direction of $e$ is going from $b$ to $a$.

The half edge of $\alpha$ containing the starting point (resp. ending point) is called the {\bf starting half edge} (resp. {\bf targeting half edge}).

A  {\bf state} of an $n$-web $\alpha$ is a map  $\partial \alpha\rightarrow \{1,2,\dots,n\}$. If there is  such a map for $\alpha$, we say $\alpha$ is {\bf a stated $n$-web.}

Recall that our ground ring is a commutative domain $R$ with an invertible element $v$. We set 
$q_v^{\frac{1}{2n}} = v$, and define the following constants:
\begin{align*}
c_{i,v}= (-q_v)^{n-i} q_v^{\frac{n-1}{2n}},\;
t_v= (-1)^{n-1} q_v^{\frac{n^2-1}{n}},\; a_v =   q_v^{\frac{n+1-2n^2}{4}}.
%
\end{align*}


\def\fS{\mathfrak{S}_n}

We will  use $\mathfrak{S}_n$ to denote the permutation group on the set $\{1,2,\dots,n\}$.

\def\M {M,\cN}


The stated $SL_n$-skein module of $\MN$, denoted as $S_n(\M,v)$, is
the quotient module of the $R$-module freely generated by the set 
 of all isotopy classes of stated 
$n$-webs in $\MN$ subject to  relations \eqref{w.cross}-\eqref{wzh.eight}.

\beq\label{w.cross}
q_v^{\frac{1}{n}} 
\raisebox{-.20in}{

\begin{tikzpicture}
\tikzset{->-/.style=

{decoration={markings,mark=at position #1 with

{\arrow{latex}}},postaction={decorate}}}
\filldraw[draw=white,fill=gray!20] (-0,-0.2) rectangle (1, 1.2);
\draw [line width =1pt,decoration={markings, mark=at position 0.5 with {\arrow{>}}},postaction={decorate}](0.6,0.6)--(1,1);
\draw [line width =1pt,decoration={markings, mark=at position 0.5 with {\arrow{>}}},postaction={decorate}](0.6,0.4)--(1,0);
\draw[line width =1pt] (0,0)--(0.4,0.4);
\draw[line width =1pt] (0,1)--(0.4,0.6);
\draw[line width =1pt] (0.4,0.6)--(0.6,0.4);
\end{tikzpicture}
}
- q_v^{-\frac {1}{n}}
\raisebox{-.20in}{
\begin{tikzpicture}
\tikzset{->-/.style=

{decoration={markings,mark=at position #1 with

{\arrow{latex}}},postaction={decorate}}}
\filldraw[draw=white,fill=gray!20] (-0,-0.2) rectangle (1, 1.2);
\draw [line width =1pt,decoration={markings, mark=at position 0.5 with {\arrow{>}}},postaction={decorate}](0.6,0.6)--(1,1);
\draw [line width =1pt,decoration={markings, mark=at position 0.5 with {\arrow{>}}},postaction={decorate}](0.6,0.4)--(1,0);
\draw[line width =1pt] (0,0)--(0.4,0.4);
\draw[line width =1pt] (0,1)--(0.4,0.6);
\draw[line width =1pt] (0.6,0.6)--(0.4,0.4);
\end{tikzpicture}
}
= (q_v-q_v^{-1})
\raisebox{-.20in}{

\begin{tikzpicture}
\tikzset{->-/.style=

{decoration={markings,mark=at position #1 with

{\arrow{latex}}},postaction={decorate}}}
\filldraw[draw=white,fill=gray!20] (-0,-0.2) rectangle (1, 1.2);
\draw [line width =1pt,decoration={markings, mark=at position 0.5 with {\arrow{>}}},postaction={decorate}](0,0.8)--(1,0.8);
\draw [line width =1pt,decoration={markings, mark=at position 0.5 with {\arrow{>}}},postaction={decorate}](0,0.2)--(1,0.2);
\end{tikzpicture}
},
\eeq 
\beq\label{w.twist}
\raisebox{-.15in}{
\begin{tikzpicture}
\tikzset{->-/.style=
{decoration={markings,mark=at position #1 with
{\arrow{latex}}},postaction={decorate}}}
\filldraw[draw=white,fill=gray!20] (-1,-0.35) rectangle (0.6, 0.65);
\draw [line width =1pt,decoration={markings, mark=at position 0.5 with {\arrow{>}}},postaction={decorate}](-1,0)--(-0.25,0);
\draw [color = black, line width =1pt](0,0)--(0.6,0);
\draw [color = black, line width =1pt] (0.166 ,0.08) arc (-37:270:0.2);
\end{tikzpicture}}
= t_v
\raisebox{-.15in}{
\begin{tikzpicture}
\tikzset{->-/.style=
{decoration={markings,mark=at position #1 with
{\arrow{latex}}},postaction={decorate}}}
\filldraw[draw=white,fill=gray!20] (-1,-0.5) rectangle (0.6, 0.5);
\draw [line width =1pt,decoration={markings, mark=at position 0.5 with {\arrow{>}}},postaction={decorate}](-1,0)--(-0.25,0);
\draw [color = black, line width =1pt](-0.25,0)--(0.6,0);
\end{tikzpicture}}
,
\eeq
\beq\label{w.unknot}
\raisebox{-.20in}{
\begin{tikzpicture}
\tikzset{->-/.style=
{decoration={markings,mark=at position #1 with
{\arrow{latex}}},postaction={decorate}}}
\filldraw[draw=white,fill=gray!20] (0,0) rectangle (1,1);
\draw [line width =1pt,decoration={markings, mark=at position 0.5 with {\arrow{>}}},postaction={decorate}](0.45,0.8)--(0.55,0.8);
\draw[line width =1pt] (0.5 ,0.5) circle (0.3);
\end{tikzpicture}}
= (-1)^{n-1} [n]\ 
\raisebox{-.20in}{
\begin{tikzpicture}
\tikzset{->-/.style=
{decoration={markings,mark=at position #1 with
{\arrow{latex}}},postaction={decorate}}}
\filldraw[draw=white,fill=gray!20] (0,0) rectangle (1,1);
\end{tikzpicture}}
,\ \text{where}\ [n]=\frac{q_v^n-q_v^{-n}}{q_v-q_v^{-1}},
\eeq
\beq\label{wzh.four}
\raisebox{-.30in}{
\begin{tikzpicture}
\tikzset{->-/.style=
{decoration={markings,mark=at position #1 with
{\arrow{latex}}},postaction={decorate}}}
\filldraw[draw=white,fill=gray!20] (-1,-0.7) rectangle (1.2,1.3);
\draw [line width =1pt,decoration={markings, mark=at position 0.5 with {\arrow{>}}},postaction={decorate}](-1,1)--(0,0);
\draw [line width =1pt,decoration={markings, mark=at position 0.5 with {\arrow{>}}},postaction={decorate}](-1,0)--(0,0);
\draw [line width =1pt,decoration={markings, mark=at position 0.5 with {\arrow{>}}},postaction={decorate}](-1,-0.4)--(0,0);
\draw [line width =1pt,decoration={markings, mark=at position 0.5 with {\arrow{<}}},postaction={decorate}](1.2,1)  --(0.2,0);
\draw [line width =1pt,decoration={markings, mark=at position 0.5 with {\arrow{<}}},postaction={decorate}](1.2,0)  --(0.2,0);
\draw [line width =1pt,decoration={markings, mark=at position 0.5 with {\arrow{<}}},postaction={decorate}](1.2,-0.4)--(0.2,0);
\node  at(-0.8,0.5) {$\vdots$};
\node  at(1,0.5) {$\vdots$};
\end{tikzpicture}}=(-q_v)^{\frac{n(n-1)}{2}}\cdot \sum_{\sigma\in \mathfrak{S}_n}
(-q_v^{\frac{1-n}n})^{\ell(\sigma)} \raisebox{-.30in}{
\begin{tikzpicture}
\tikzset{->-/.style=
{decoration={markings,mark=at position #1 with
{\arrow{latex}}},postaction={decorate}}}
\filldraw[draw=white,fill=gray!20] (-1,-0.7) rectangle (1.2,1.3);
\draw [line width =1pt,decoration={markings, mark=at position 0.5 with {\arrow{>}}},postaction={decorate}](-1,1)--(0,0);
\draw [line width =1pt,decoration={markings, mark=at position 0.5 with {\arrow{>}}},postaction={decorate}](-1,0)--(0,0);
\draw [line width =1pt,decoration={markings, mark=at position 0.5 with {\arrow{>}}},postaction={decorate}](-1,-0.4)--(0,0);
\draw [line width =1pt,decoration={markings, mark=at position 0.5 with {\arrow{<}}},postaction={decorate}](1.2,1)  --(0.2,0);
\draw [line width =1pt,decoration={markings, mark=at position 0.5 with {\arrow{<}}},postaction={decorate}](1.2,0)  --(0.2,0);
\draw [line width =1pt,decoration={markings, mark=at position 0.5 with {\arrow{<}}},postaction={decorate}](1.2,-0.4)--(0.2,0);
\node  at(-0.8,0.5) {$\vdots$};
\node  at(1,0.5) {$\vdots$};
\filldraw[draw=black,fill=gray!20,line width =1pt]  (0.1,0.3) ellipse (0.4 and 0.7);
\node  at(0.1,0.3){$\sigma_{+}$};
\end{tikzpicture}},
\eeq
where the ellipse enclosing $\sigma_+$  is the minimum crossing positive braid representing a permutation $\sigma\in \mathfrak{S}_n$ and $\ell(\sigma)=\mid\{(i,j)\mid 1\leq i<j\leq n, \sigma(i)>\sigma(j)\}|$ is the length of $\sigma\in \mathfrak{S}_n$.

\beq\label{wzh.five}
   \raisebox{-.30in}{
\begin{tikzpicture}
\tikzset{->-/.style=
{decoration={markings,mark=at position #1 with
{\arrow{latex}}},postaction={decorate}}}
\filldraw[draw=white,fill=gray!20] (-1,-0.7) rectangle (0.2,1.3);
\draw [line width =1pt](-1,1)--(0,0);
\draw [line width =1pt](-1,0)--(0,0);
\draw [line width =1pt](-1,-0.4)--(0,0);
\draw [line width =1.5pt](0.2,1.3)--(0.2,-0.7);
\node  at(-0.8,0.5) {$\vdots$};
\filldraw[fill=white,line width =0.8pt] (-0.5 ,0.5) circle (0.07);
\filldraw[fill=white,line width =0.8pt] (-0.5 ,0) circle (0.07);
\filldraw[fill=white,line width =0.8pt] (-0.5 ,-0.2) circle (0.07);
\end{tikzpicture}}
   = 
   a_v \sum_{\sigma \in \mathfrak{S}_n} (-q_v)^{\ell(\sigma)}\,  \raisebox{-.30in}{
\begin{tikzpicture}
\tikzset{->-/.style=
{decoration={markings,mark=at position #1 with
{\arrow{latex}}},postaction={decorate}}}
\filldraw[draw=white,fill=gray!20] (-1,-0.7) rectangle (0.2,1.3);
\draw [line width =1pt](-1,1)--(0.2,1);
\draw [line width =1pt](-1,0)--(0.2,0);
\draw [line width =1pt](-1,-0.4)--(0.2,-0.4);
\draw [line width =1.5pt,decoration={markings, mark=at position 1 with {\arrow{>}}},postaction={decorate}](0.2,1.3)--(0.2,-0.7);
\node  at(-0.8,0.5) {$\vdots$};
\filldraw[fill=white,line width =0.8pt] (-0.5 ,1) circle (0.07);
\filldraw[fill=white,line width =0.8pt] (-0.5 ,0) circle (0.07);
\filldraw[fill=white,line width =0.8pt] (-0.5 ,-0.4) circle (0.07);
\node [right] at(0.2,1) {$\sigma(n)$};
\node [right] at(0.2,0) {$\sigma(2)$};
\node [right] at(0.2,-0.4){$\sigma(1)$};
\end{tikzpicture}},
\eeq
\beq \label{wzh.six}
\raisebox{-.20in}{
\begin{tikzpicture}
\tikzset{->-/.style=
{decoration={markings,mark=at position #1 with
{\arrow{latex}}},postaction={decorate}}}
\filldraw[draw=white,fill=gray!20] (-0.7,-0.7) rectangle (0,0.7);
\draw [line width =1.5pt,decoration={markings, mark=at position 1 with {\arrow{>}}},postaction={decorate}](0,0.7)--(0,-0.7);
\draw [color = black, line width =1pt] (0 ,0.3) arc (90:270:0.5 and 0.3);
\node [right]  at(0,0.3) {$i$};
\node [right] at(0,-0.3){$j$};
\filldraw[fill=white,line width =0.8pt] (-0.5 ,0) circle (0.07);
\end{tikzpicture}}   = \delta_{\bar j,i }\,  c_{i,v}\ \raisebox{-.20in}{
\begin{tikzpicture}
\tikzset{->-/.style=
{decoration={markings,mark=at position #1 with
{\arrow{latex}}},postaction={decorate}}}
\filldraw[draw=white,fill=gray!20] (-0.7,-0.7) rectangle (0,0.7);
\draw [line width =1.5pt](0,0.7)--(0,-0.7);
\end{tikzpicture}},
\eeq
\beq \label{wzh.seven}
\raisebox{-.20in}{
\begin{tikzpicture}
\tikzset{->-/.style=
{decoration={markings,mark=at position #1 with
{\arrow{latex}}},postaction={decorate}}}
\filldraw[draw=white,fill=gray!20] (-0.7,-0.7) rectangle (0,0.7);
\draw [line width =1.5pt](0,0.7)--(0,-0.7);
\draw [color = black, line width =1pt] (-0.7 ,-0.3) arc (-90:90:0.5 and 0.3);
\filldraw[fill=white,line width =0.8pt] (-0.55 ,0.26) circle (0.07);
\end{tikzpicture}}
= \sum_{i=1}^n  (c_{\bar i,v})^{-1}\, \raisebox{-.20in}{
\begin{tikzpicture}
\tikzset{->-/.style=
{decoration={markings,mark=at position #1 with
{\arrow{latex}}},postaction={decorate}}}
\filldraw[draw=white,fill=gray!20] (-0.7,-0.7) rectangle (0,0.7);
\draw [line width =1.5pt,decoration={markings, mark=at position 1 with {\arrow{>}}},postaction={decorate}](0,0.7)--(0,-0.7);
\draw [line width =1pt](-0.7,0.3)--(0,0.3);
\draw [line width =1pt](-0.7,-0.3)--(0,-0.3);
\filldraw[fill=white,line width =0.8pt] (-0.3 ,0.3) circle (0.07);
\filldraw[fill=black,line width =0.8pt] (-0.3 ,-0.3) circle (0.07);
\node [right]  at(0,0.3) {$i$};
\node [right]  at(0,-0.3) {$\bar{i}$};
\end{tikzpicture}},
\eeq
\beq\label{wzh.eight}
\raisebox{-.20in}{

\begin{tikzpicture}
\tikzset{->-/.style=

{decoration={markings,mark=at position #1 with

{\arrow{latex}}},postaction={decorate}}}
\filldraw[draw=white,fill=gray!20] (-0,-0.2) rectangle (1, 1.2);
\draw [line width =1.5pt,decoration={markings, mark=at position 1 with {\arrow{>}}},postaction={decorate}](1,1.2)--(1,-0.2);
\draw [line width =1pt](0.6,0.6)--(1,1);
\draw [line width =1pt](0.6,0.4)--(1,0);
\draw[line width =1pt] (0,0)--(0.4,0.4);
\draw[line width =1pt] (0,1)--(0.4,0.6);
\draw[line width =1pt] (0.4,0.6)--(0.6,0.4);
\filldraw[fill=white,line width =0.8pt] (0.2 ,0.2) circle (0.07);
\filldraw[fill=white,line width =0.8pt] (0.2 ,0.8) circle (0.07);
\node [right]  at(1,1) {$i$};
\node [right]  at(1,0) {$j$};
\end{tikzpicture}
} =q_v^{-\frac{1}{n}}\left(\delta_{{j<i} }(q_v-q_v^{-1})\raisebox{-.20in}{

\begin{tikzpicture}
\tikzset{->-/.style=

{decoration={markings,mark=at position #1 with

{\arrow{latex}}},postaction={decorate}}}
\filldraw[draw=white,fill=gray!20] (-0,-0.2) rectangle (1, 1.2);
\draw [line width =1.5pt,decoration={markings, mark=at position 1 with {\arrow{>}}},postaction={decorate}](1,1.2)--(1,-0.2);
\draw [line width =1pt](0,0.8)--(1,0.8);
\draw [line width =1pt](0,0.2)--(1,0.2);
\filldraw[fill=white,line width =0.8pt] (0.2 ,0.8) circle (0.07);
\filldraw[fill=white,line width =0.8pt] (0.2 ,0.2) circle (0.07);
\node [right]  at(1,0.8) {$i$};
\node [right]  at(1,0.2) {$j$};
\end{tikzpicture}
}+q_v^{\delta_{i,j}}\raisebox{-.20in}{

\begin{tikzpicture}
\tikzset{->-/.style=

{decoration={markings,mark=at position #1 with

{\arrow{latex}}},postaction={decorate}}}
\filldraw[draw=white,fill=gray!20] (-0,-0.2) rectangle (1, 1.2);
\draw [line width =1.5pt,decoration={markings, mark=at position 1 with {\arrow{>}}},postaction={decorate}](1,1.2)--(1,-0.2);
\draw [line width =1pt](0,0.8)--(1,0.8);
\draw [line width =1pt](0,0.2)--(1,0.2);
\filldraw[fill=white,line width =0.8pt] (0.2 ,0.8) circle (0.07);
\filldraw[fill=white,line width =0.8pt] (0.2 ,0.2) circle (0.07);
\node [right]  at(1,0.8) {$j$};
\node [right]  at(1,0.2) {$i$};
\end{tikzpicture}
}\right),
\eeq
where   
$\delta_{j<i}= \left \{
 \begin{array}{rr}
     1,                    & j<i\\
     0,                                 & \text{otherwise}
 \end{array}
 \right.,
\delta_{i,j}= \left \{
 \begin{array}{rr}
     1,                    & i=j\\
     0,                                 & \text{otherwise}
 \end{array}
 \right.$. Each shaded rectangle in the above relations is the projection of a small open cube embedded in $M$. The lines contained in the shaded rectangle represent parts of stated $n$-webs with framing  pointing to  readers. The thick line in the edge of shaded rectangle is a part of the marking.   For detailed explanation for the above relations, please refer to \cite{le2021stated}.


\subsection{Functoriality}
In this subsection, we review the functoriality of the stated $SL_n$-skein modules as introduced in \cite{le2021stated}.

For any two marked  3-manifolds $\MN,(M',\cN')$, if an orientation preserving  embedding $f:M\rightarrow M'$ maps 
$\cN$ to $\cN'$ and preserves  orientations between $\cN$ and $\cN$, we call $f$  an embedding from $\MN$ to $(M',\cN')$. Clearly $f$ induces an $R$-linear map $f_{*}:S_n(\M,v)\rightarrow S_n(M',\cN',v)$ \cite{le2021stated}.
If there exist embeddings $f:\MN\rightarrow (M',\cN')$ and  $g:(M',\cN')\rightarrow \MN$ such that $g,f$
are inverse to each other, we say that $\MN$ and $(M',\cN')$
are isomorphic to each other.

\subsection{The splitting map}\label{splitting}
In this subsection, we review the splitting map of the stated $SL_n$-skein modules as introduced in \cite{le2021stated}.

Let $\MN$ be a marked 3-manifold, and let $D$ be a properly embedded closed disk in $M$, called the {\bf splitting disk}, such that $D$ does not intersect the closure of $\cN$. Removing a regular open neighborhood of $D$ from $M$ results in a new 3-manifold $M'$, where $\partial M'$ contains two copies, $D_1$ and $D_2$, of $D$. The original manifold $M$ can be recovered from $M'$ by gluing $D_1$ and $D_2$. Let $\text{pr}$ denote the natural projection from $M'$ to $M$.

Let $e \subset D$ be an oriented open interval. Suppose $\text{pr}^{-1}(e) = e_1 \cup e_2$ with $e_1 \subset D_1$ and $e_2 \subset D_2$. Cutting $(M, \cN)$ along $(D, e)$ yields a new marked 3-manifold $(M', \cN')$, where $\cN' = \cN \cup e_1 \cup e_2$. We will  denote   $(M{'},\cN{'})$ as  Cut$_{(D,e)}(M, \cN )$.
It is easy to see that $\text{Cut}_{(D, e)}(M, \cN)$ is well-defined up to isomorphism. Moreover, if $e'$ is another oriented open interval in $D$, then $\text{Cut}_{(D, e)}(M, \cN)$ is isomorphic to $\text{Cut}_{(D, e')}(M, \cN)$.  

There exists an $R$-linear map, known as the {\bf splitting map},  
$$
\Theta_{(D, e)}: S_n(M, \cN, v) \rightarrow S_n(M', \cN', v),
$$  
as introduced in \cite{le2021stated}.

\subsection{Punctured bordered surfaces and stated $SL_n$-skein algebras}\label{subb2.4}


A {\bf punctured bordered surface} $\Sigma$ is $\overline{\Sigma}-U$ where $\overline{\Sigma}$ is a compact oriented surface and $U$ is a finite set of $\overline{\Sigma}$ such that every component of $\partial \overline{\Sigma}$ intersects $U$.  
 For simplicity, we will call a punctured bordered surface a {\bf pb surface}.

The points in $U$ are called {\bf ideal points}. An embedded smooth curve in the interior of $\Sigma$ connecting  two points in $U$ (these two points could be the same point) is called an {\bf ideal arc}.  


The stated $SL_n$-skein algebra, denoted as $S_n(\Sigma,v)$, of a pb surface $\Sigma $ is defined as following: For every  component $c$ of $\partial \Sigma$, we choose a point $x_c$. Let $M = \Sigma\times[-1,1]$ and $\cN=\cup_{c}\, x_c \times (-1,1)$  where $c$ goes over all components of $\partial \Sigma$. The orientation of $\cN$ is given by the positive orientation of $(-1,1).$ Then we define $S_n(\Sigma,v) $ to be $S_n(M,\cN,v)$. We will call $(M,\cN)$  the thickening of the pb surface $\Sigma$.
Obviously $S_n(\Sigma,v) $ admits an algebra structure. For any two stated $n$-webs $\alpha_1$ and $\alpha_2$ in  the thickening of $\Sigma$, we define $\alpha_1\alpha_2\in S_n({\Sigma},v)$ to be the result of stacking $\alpha_1$ above $\alpha_2$.  


Any stated $n$-web $\alpha$ in the thickening of $\Sigma$ can be represented by a {\bf stated $n$-web diagram} in $\Sigma$, which is obtained by projecting $\alpha$ onto $\Sigma$. Before performing the projection, we first apply a height-preserving isotopy to $\alpha$ so that its singular points look like the pictures in Figure \ref{fg1} and ensure that the endpoints of $\alpha$ are distinct.

The orientation of $\partial \Sigma$ induced by the orientation of $\Sigma$ is call the {\bf positive orientation}
 of $\partial \Sigma$. The orientation of $\partial \Sigma$ opposite to the positive  orientation of $\partial \Sigma$ is called the {\bf negative orientation} of $\partial \Sigma$. A stated $n$-web diagram, where the heights of its endpoints are given by the positive (resp. negative) orientation of $\Sigma$, is called the {\bf positively ordered stated $n$-web diagram} (resp. {\bf negatively ordered stated $n$-web diagram}).



%
%
%
%
%

For a stated $n$-web (resp. a stated $n$-web diagram) $\alpha$, we define the following quantities:  
\begin{itemize}
    \item $e(\alpha)$: the number of endpoints of $\alpha$.

    \item $t(\alpha)$: the number of endpoints of $\alpha$ that point towards $\mathcal{N}$ (resp. towards the boundary of the pb surface).

    \item $p(\alpha)$: the number of sinks and sources of $\alpha$.
\end{itemize}

\subsection{The splitting map for pb surfaces}\label{sub27}

Let $c$ be ideal arc of a pb surface $\Sigma$. After cutting $\Sigma$ along $c$, we get a new pb surface $\text{Cut}_c\,{\Sigma}$, which has two copies $c_1,c_2$ for $c$ such that 
${\Sigma}= \text{Cut}_c\,{\Sigma}/(c_1=c_2)$. We use $\pr$ to denote the projection from $\text{Cut}_c\,{\Sigma}$ to $\Sigma$.  Suppose $\alpha$ is a stated $n$-web diagram in $\Sigma$, which is transverse to $c$.
Let $s$ be a map from $c\cap\alpha$ to $\{1,2,\cdots,n\}$, and $h$ is a linear order on $c\cap\alpha$. Then there is a lift stated $n$-web diagram $\alpha(h,s)$  in $\text{Cut}_c\,{\Sigma}$. The heights of the newly created endpoints of $\alpha(h,s)$ are induced by $h$ (via $\pr$), and the states of the newly created endpoints of $\alpha(h,s)$ are induced by $s$ (via $\pr$).
Then the splitting map is defined by 
$$\Theta_c(\alpha) =\sum_{s: \alpha \cap c \to \{1,\dots, n\}} \alpha(h, s),$$
furthermore $\Theta_c$ is an algebra homomorphism \cite{le2021stated}. 

\subsection{The $O_q(SL_n)$}\label{bigon}

We refer to  \cite{KS,le2021stated,leY} for the definition of $O_q(SL_n)$.

We have the following coefficients
\beq
\cR^{ij}_{lk} = q^{-\frac 1n} \left(    q^{ \delta_{i,j}} \delta_{j,k} \delta_{i,l} + (q-q^{-1})
\delta_{j<i} \delta_{j,l} \delta_{i,k}\right),
\label{R}
\eeq
where $\delta_{j<i}=1$ if $j<i$ and $\delta_{j<i}=0$ otherwise.

Let $O_q(M(n))$ be the associative algebra generated by  $u_{i,j}$,   $i,j\in\{1,\cdots,n\},$
subject to the relations 
\beq
(\buu \ot \buu) \cR = \cR (\buu \ot \buu),  
\eeq
where $\cR$ is the $n^2\times n^2$ matrix given by equation \eqref{R}, and $\buu \ot \buu$ is the $n^2\times n^2$ matrix with entries $(\buu \ot \buu)^{ik}_{jl} = u_{i,j} u_{k,l}$ for $i,j,k,l\in \{1,\cdots,n\}$. 
Define  the element 
$$ {\det}_q(\buu)\triangleq \sum_{\sigma\in \fS} (-q)^{\ell(\sigma)}u_{1,\sigma(1)}\cdots u_{n,\sigma(n)} = \sum_{\sigma\in \fS} (-q)^{\ell(\sigma)}u_{\sigma(1),1}\cdots u_{\sigma(n),n}.$$

Define $O_q(SL_n)$ to be  $O_q(M(n))/(\det_q \buu-1).$ Then 
$O_q(SL_n)$ is a Hopf algebra with the Hopf algebra structure given by
\begin{align*}
	\Delta(u_{i,j}) & = \sum_{k=1}^n u_{i,k} \ot u_{k,j}, \quad  \epsilon(u_{i,j})= \delta_{i,j}.\label{eq.Deltave}\\
	S({u}_{i,j} )&= (\buu^!)_{i,j} = (-q)^{i-j} {\det}_q(\buu^{j,i}).
\end{align*}
Here $\buu^{j,i}$ is the result of removing the $j$-th row and $i$-th column from $\buu$.

We have that $O_{q}(SL_n)$ is isomorphic to the stated skein algebra of the bigon as Hopf algebras \cite{le2021stated}.




\section{Stated $SL_n$-skein algebras skewed by  roots of unity}

For a pb surface $\Sigma$,
two negatively ordered stated $n$-web diagrams represent the isotopic stated $n$-webs in $\Sigma\times [-1,1]$ if and only if they are related by a sequence of the ambient isotopies and the five moves in Figure \ref{fg}. See Figure 2 in \cite{frohman20223} for $n=3$.

\begin{figure}[h]  
	\centering\includegraphics[width=14cm]{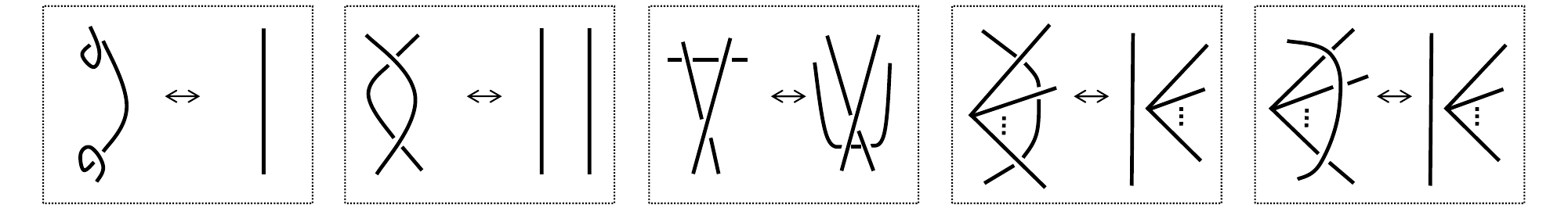} 
	\caption{Five moves for the (stated) $n$-web diagrams. The orientations for (stated) $n$-web diagrams are arbitrary.} 
	\label{fg}   
\end{figure}

For a pb surface $\Sigma$, we can regard $S_n(\Sigma,v)$ as the quotient of the $R$-module freely generated by the set of ambient isotopy classes of negatively ordered stated $n$-web diagrams, subject to the five moves in Figure \ref{fg} and relations \eqref{w.cross}-\eqref{wzh.eight}. The multiplication is then defined by stacking negatively ordered stated $n$-web diagrams in such a way that the result remains a negatively ordered stated $n$-web diagram.

For a negatively ordered stated $n$-web diagram $\alpha$, every crossing point $p$ of $\alpha$ determines a ``$+$" or ``$-$" sign, as illustrated in Figure \ref{fg1}, which we denote by $w(p)$.  
We then define $w(\alpha) = \sum_{p} w(p)$, where the sum is taken over all crossing points of $\alpha$, with the plus sign regarded as $1$ and the minus sign as $-1$.  
If $\alpha$ has no crossings, we define $w(\alpha) = 0$.

\begin{figure}[h]  
	\centering\includegraphics[width=5cm]{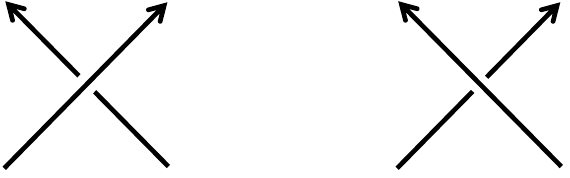} 
	\caption{The sign determined by the left (resp. the right) picture is ``$+$" (resp. ``$-$").}       
	\label{fg1}   
\end{figure}

For any two integers $i,j$ with $j>0$, suppose $i = kj+r$, where $k,r\in \mathbb{Z}$ and 
$0\leq r\leq j-1$. We define $i(\text{mod}\;j)$ to be $r$.

Let $\alpha,\beta$ be two negatively ordered stated $n$-web diagrams. Suppose $\gamma$ is a negative ordered stated $n$-web diagram representing $\alpha\beta$. 
Then, define  
$$
w(\alpha, \beta) = \left(\sum_{p} w(p)\right) (\text{mod}\;n),
$$
where the sum is taken over the subset of crossing points in $\gamma$ that involve strands from both $\alpha$ and $\beta$. 
From the moves in Figure \ref{fg}, we conclude that \( w(\alpha, \beta) \) is well-defined and satisfies \( w(\alpha', \beta') = w(\alpha, \beta) \) whenever \(\alpha\) (resp. \(\beta\)) and \(\alpha'\) (resp. \(\beta'\)) represent isotopic stated \( n \)-webs.  
Note that \( w(\alpha, \beta) \) does not necessarily satisfy  
\[
w(\alpha, \beta) = (n - w(\beta, \alpha) )(\text{mod}\;n).
\]  
However, this equality holds when both \(\alpha\) and \(\beta\) have no endpoints.  

For any integer \( 0 \leq k \leq n - 1 \), we define \( S_n(\Sigma, v)_k \) as  
\begin{gather}
    R\text{-linear span} \{ \text{negatively ordered stated } n\text{-web diagrams } \alpha \mid  \nonumber \\
    w(\alpha, \alpha') \text{ and } w(\alpha', \alpha) \text{ are multiples of } k,  \label{condition} \\
    \text{for all negatively ordered stated } n\text{-web diagrams } \alpha' \}. \nonumber
\end{gather}  
It is straightforward to verify that \( S_n(\Sigma, v)_k \) forms a subalgebra of \( S_n(\Sigma, v) \) whenever \( k \mid n \).

\subsection{On $SL_n$-skein algebras}

Let \(\Sigma\) be a surface with empty boundary. Then, the definition of \( S_n(\Sigma, v) \) depends only on \( v^2 \), meaning that \( S_n(\Sigma, v) \) and \( S_n(\Sigma, -v) \) define the same (stated) \( SL_n \)-skein algebra. Consequently, when \( \partial \Sigma = \emptyset \), we use the notation \( S_n(\Sigma|v^2) \) to represent \( S_n(\Sigma, v) \).  
Since \( \partial \Sigma = \emptyset \), every (stated) \( n \)-web diagram in \( \Sigma \) is necessarily a negatively ordered stated \( n \)-web diagram.

\begin{theorem}\label{skein}
	Let $\Sigma$ be a surface with $\partial \Sigma =\emptyset$, let $m,n$ be two positive integers with $m\mid n$, and let $u,\epsilon\in R$ be two invertible elements in $R$  such that $u^2 = \epsilon v^2$, $\epsilon^m = 1$.  Then there exists an $R$-linear isomorphism $\varphi_{\epsilon}:S_n(\Sigma |v^2)\rightarrow S_n(\Sigma|u^2)$, defined by $\varphi_{\epsilon}(\alpha) = \epsilon^{w(\alpha)}\alpha$ for any $n$-web diagram $\alpha$.
\end{theorem}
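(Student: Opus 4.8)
The plan is to check that the assignment $\alpha\mapsto \epsilon^{w(\alpha)}\alpha$ on isotopy classes of $n$-web diagrams descends to a well-defined $R$-linear map $S_n(\Sigma|v^2)\to S_n(\Sigma|u^2)$, and then to produce its inverse from $\epsilon^{-1}$. The first thing I would record is the identity $q_u=q_v$: writing $q_v=v^{2n}$ and $q_u=u^{2n}$, the hypothesis $u^2=\epsilon v^2$ gives $q_u=(\epsilon v^2)^n=\epsilon^n q_v$, and since $m\mid n$ and $\epsilon^m=1$ we have $\epsilon^n=1$, so $q_u=q_v=:q$. Consequently every coefficient in the defining relations that is a sign times an integer power of $q$ — in particular $[n]$, the prefactors $(-q_v)^{n(n-1)/2}$, and the powers that make up $t_v$, $a_v$, $c_{i,v}$ — is literally the same element of $R$ in both algebras; the only coefficients that genuinely differ are those carrying a factor $q^{1/n}$, which equals $v^2$ on the source side but $u^2=\epsilon v^2$ on the target side.

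Next I would establish well-definedness on isotopy classes, i.e.\ that $\epsilon^{w(\alpha)}$ is invariant under ambient isotopy and the five moves of Figure \ref{fg}. Planar isotopy preserves all crossings and their signs; among the five moves, the Reidemeister II and III type moves change $w(\alpha)$ by $0$, and the only moves that change $w(\alpha)$ are those sliding a strand across an $n$-valent vertex, which create or destroy $n$ crossings of a single sign and hence change $w(\alpha)$ by $\pm n$. This is exactly the mod-$n$ invariance already used to make $w(\alpha,\beta)$ well defined. Since $\mathrm{ord}(\epsilon)\mid n$, the quantity $\epsilon^{w(\alpha)}$ is unchanged by each move, so the assignment is well defined on the free module spanned by isotopy classes. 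Here it is essential that, because $\partial\Sigma=\emptyset$, there are no endpoints, so the boundary/state relations \eqref{wzh.five}--\eqref{wzh.eight} never occur and only the interior relations \eqref{w.cross}, \eqref{w.twist}, \eqref{w.unknot}, \eqref{wzh.four} need be treated.

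The core of the argument is then a relation-by-relation check that the map carries each defining relation of $S_n(\Sigma|v^2)$ into the relation submodule of $S_n(\Sigma|u^2)$. Writing a relation as $\sum_i \lambda_i^{(v)}\alpha_i=0$ with $\lambda_i^{(v)}=\mu_i\,v^{2d_i}$, where $\mu_i$ is a sign times an integer power of $q$ and $d_i\in\mathbb{Z}$, the corresponding relation of $S_n(\Sigma|u^2)$ has coefficients $\lambda_i^{(u)}=\mu_i\,u^{2d_i}=\epsilon^{d_i}\lambda_i^{(v)}$. Applying the map yields $\sum_i\lambda_i^{(v)}\epsilon^{w(\alpha_i)}\alpha_i=\sum_i\epsilon^{w(\alpha_i)-d_i}\lambda_i^{(u)}\alpha_i$, which is a unit multiple of $\sum_i\lambda_i^{(u)}\alpha_i=0$ precisely when $w(\alpha_i)-d_i$ is independent of $i$ modulo $\mathrm{ord}(\epsilon)$; since the "rest" of the diagram is common to all terms of a local relation, only the local crossing contributions $w_{\mathrm{loc}}$ and the exponents $d_i$ enter. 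I would verify this bookkeeping for each interior relation: for \eqref{w.cross} the three terms have $(d,w_{\mathrm{loc}})=(1,+1),(-1,-1),(0,0)$, all giving $w_{\mathrm{loc}}-d=0$; for \eqref{w.unknot} both terms have $d=0$ and $w_{\mathrm{loc}}=0$; for \eqref{w.twist} the curl term has $(d,w_{\mathrm{loc}})=(0,\sigma)$ with $\sigma$ the sign of the kink's self-crossing and the straight term has $(n^2-1,0)$, so $w_{\mathrm{loc}}-d$ reads $\sigma$ versus $-(n^2-1)\equiv 1\pmod n$, which agree iff $\sigma=+1$; and for \eqref{wzh.four} the $\sigma$-indexed term carries $v^{2\ell(\sigma)}$ (so $d=\ell(\sigma)$) while the positive braid $\sigma_+$ contributes exactly $\ell(\sigma)$ equally-signed crossings, again making $w_{\mathrm{loc}}-d$ constant.

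Finally, the inverse assignment $\beta\mapsto\epsilon^{-w(\beta)}\beta$ is, by the symmetric version of the same computation (replacing $\epsilon$ by $\epsilon^{-1}$, which also satisfies $(\epsilon^{-1})^m=1$, and $v$ by $u$), a well-defined map $S_n(\Sigma|u^2)\to S_n(\Sigma|v^2)$; it is a two-sided inverse since the two factors multiply to $\epsilon^{\pm w}\epsilon^{\mp w}=1$, so $\varphi_\epsilon$ is an isomorphism. I expect the main obstacle to be precisely the sign bookkeeping of the previous paragraph: one must confirm that the crossing sign of Figure \ref{fg1} is normalized so that the $v^2$-weighted crossing in \eqref{w.cross}, the self-crossing of the curl in \eqref{w.twist}, and the crossings of the positive braids in \eqref{wzh.four} all count with sign $+1$. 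These are the only places where the argument could fail for $\epsilon$ of order $>2$, since a mismatched sign would force $\mathrm{ord}(\epsilon)\mid 2$; once the conventions are pinned down they are mutually consistent, reflecting the single normalization that a positive crossing carries the factor $q^{1/n}=v^2$.
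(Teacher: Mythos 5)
Your proposal is correct and follows essentially the same route as the paper's proof: well-definedness via the mod-$n$ invariance of $w$ under the five moves (using $\epsilon^n=1$ from $\epsilon^m=1$, $m\mid n$), then a relation-by-relation check of \eqref{w.cross}--\eqref{wzh.four} resting on $q_u=q_v$ and $q_u^{1/n}=\epsilon\, q_v^{1/n}$ (equivalently $t_u=\epsilon^{-1}t_v$), with $\varphi_{\epsilon^{-1}}$ as the inverse. Your uniform $w_{\mathrm{loc}}-d$ bookkeeping is just a systematized version of the paper's explicit diagrammatic computations, and your sign checks (positive kink in \eqref{w.twist}, the $\ell(\sigma)$ positive crossings of $\sigma_+$ in \eqref{wzh.four}) match the paper's conventions.
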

\begin{proof}
	
	Suppose that $\alpha_1$ and $\alpha_2$ are $n$-web diagrams representing the isotopic $n$-webs in $\Sigma\times[-1,1]$. The five moves in Figure \ref{fg} imply $w(\alpha_1)(\text{mod}\;n) = w(\alpha_2)(\text{mod}\;n)$. Then we have $\epsilon^{w(\alpha_1)} = \epsilon^{w(\alpha_2)}$ because $\epsilon^n = 1$. Thus $\varphi_{\epsilon}$ is well-defined on the set of isotopy classes of $n$-webs in $\Sigma\times [-1,1]$.
	
	It suffices to show that $\varphi_{\epsilon}$ preserves relations \eqref{w.cross}-\eqref{wzh.four}. Note that $q_u^{\frac{1}{n}} = \epsilon q_v^{\frac{1}{n}}$,  $q_u= q_v$, and $t_u = (-1)^{n-1}q_u^{n-\frac{1}{n}} = \epsilon^{-1}t_v$.

Relation \eqref{w.cross}: We use $\alpha_1$ to denote the $n$-web diagram on the right-hand side of relation \eqref{w.cross}. Then we have 
	\begin{align*}
		&\varphi_{\epsilon}(
	q_v^{\frac{1}{n}} 
	\raisebox{-.20in}{		
		\begin{tikzpicture}
			\tikzset{->-/.style=				
				{decoration={markings,mark=at position #1 with						
						{\arrow{latex}}},postaction={decorate}}}
			\filldraw[draw=white,fill=gray!20] (-0,-0.2) rectangle (1, 1.2);
			\draw [line width =1pt,decoration={markings, mark=at position 0.5 with {\arrow{>}}},postaction={decorate}](0.6,0.6)--(1,1);
			\draw [line width =1pt,decoration={markings, mark=at position 0.5 with {\arrow{>}}},postaction={decorate}](0.6,0.4)--(1,0);
			\draw[line width =1pt] (0,0)--(0.4,0.4);
			\draw[line width =1pt] (0,1)--(0.4,0.6);
			\draw[line width =1pt] (0.4,0.6)--(0.6,0.4);
		\end{tikzpicture}	}
	- q_v^{-\frac {1}{n}}
	\raisebox{-.20in}{
		\begin{tikzpicture}
			\tikzset{->-/.style=				
				{decoration={markings,mark=at position #1 with						
						{\arrow{latex}}},postaction={decorate}}}
			\filldraw[draw=white,fill=gray!20] (-0,-0.2) rectangle (1, 1.2);
			\draw [line width =1pt,decoration={markings, mark=at position 0.5 with {\arrow{>}}},postaction={decorate}](0.6,0.6)--(1,1);
			\draw [line width =1pt,decoration={markings, mark=at position 0.5 with {\arrow{>}}},postaction={decorate}](0.6,0.4)--(1,0);
			\draw[line width =1pt] (0,0)--(0.4,0.4);
			\draw[line width =1pt] (0,1)--(0.4,0.6);
			\draw[line width =1pt] (0.6,0.6)--(0.4,0.4);
		\end{tikzpicture}
	})=\epsilon^{w(\alpha_1)} \epsilon
	q_v^{\frac{1}{n}} 
	\raisebox{-.20in}{		
	\begin{tikzpicture}
		\tikzset{->-/.style=				
			{decoration={markings,mark=at position #1 with						
					{\arrow{latex}}},postaction={decorate}}}
		\filldraw[draw=white,fill=gray!20] (-0,-0.2) rectangle (1, 1.2);
		\draw [line width =1pt,decoration={markings, mark=at position 0.5 with {\arrow{>}}},postaction={decorate}](0.6,0.6)--(1,1);
		\draw [line width =1pt,decoration={markings, mark=at position 0.5 with {\arrow{>}}},postaction={decorate}](0.6,0.4)--(1,0);
		\draw[line width =1pt] (0,0)--(0.4,0.4);
		\draw[line width =1pt] (0,1)--(0.4,0.6);
		\draw[line width =1pt] (0.4,0.6)--(0.6,0.4);
		\end{tikzpicture}	}
		- \epsilon^{w(\alpha_1)} \epsilon^{-1} q_v^{-\frac {1}{n}}
		\raisebox{-.20in}{
	\begin{tikzpicture}
		\tikzset{->-/.style=				
			{decoration={markings,mark=at position #1 with						
					{\arrow{latex}}},postaction={decorate}}}
		\filldraw[draw=white,fill=gray!20] (-0,-0.2) rectangle (1, 1.2);
		\draw [line width =1pt,decoration={markings, mark=at position 0.5 with {\arrow{>}}},postaction={decorate}](0.6,0.6)--(1,1);
		\draw [line width =1pt,decoration={markings, mark=at position 0.5 with {\arrow{>}}},postaction={decorate}](0.6,0.4)--(1,0);
		\draw[line width =1pt] (0,0)--(0.4,0.4);
		\draw[line width =1pt] (0,1)--(0.4,0.6);
		\draw[line width =1pt] (0.6,0.6)--(0.4,0.4);
	\end{tikzpicture}
	}\\
	= &(q_u-q_u^{-1})\epsilon^{w(\alpha_1)}
	\raisebox{-.20in}{		
		\begin{tikzpicture}
			\tikzset{->-/.style=				
				{decoration={markings,mark=at position #1 with						
						{\arrow{latex}}},postaction={decorate}}}
			\filldraw[draw=white,fill=gray!20] (-0,-0.2) rectangle (1, 1.2);
			\draw [line width =1pt,decoration={markings, mark=at position 0.5 with {\arrow{>}}},postaction={decorate}](0,0.8)--(1,0.8);
			\draw [line width =1pt,decoration={markings, mark=at position 0.5 with {\arrow{>}}},postaction={decorate}](0,0.2)--(1,0.2);
		\end{tikzpicture}
	}= \varphi_{\epsilon}((q_v-q_v^{-1})
	\raisebox{-.20in}{		
	\begin{tikzpicture}
		\tikzset{->-/.style=				
			{decoration={markings,mark=at position #1 with						
					{\arrow{latex}}},postaction={decorate}}}
		\filldraw[draw=white,fill=gray!20] (-0,-0.2) rectangle (1, 1.2);
		\draw [line width =1pt,decoration={markings, mark=at position 0.5 with {\arrow{>}}},postaction={decorate}](0,0.8)--(1,0.8);
		\draw [line width =1pt,decoration={markings, mark=at position 0.5 with {\arrow{>}}},postaction={decorate}](0,0.2)--(1,0.2);
	\end{tikzpicture}
	}).
\end{align*}

Relation \eqref{w.twist}: We use $\alpha_2$ to denote the $n$-web diagram on the right-hand side of relation \eqref{w.twist}. Then we have 
	\begin{align*}
		\varphi_{\epsilon}(
	\raisebox{-.15in}{
		\begin{tikzpicture}
			\tikzset{->-/.style=
				{decoration={markings,mark=at position #1 with
						{\arrow{latex}}},postaction={decorate}}}
			\filldraw[draw=white,fill=gray!20] (-1,-0.35) rectangle (0.6, 0.65);
			\draw [line width =1pt,decoration={markings, mark=at position 0.5 with {\arrow{>}}},postaction={decorate}](-1,0)--(-0.25,0);
			\draw [color = black, line width =1pt](0,0)--(0.6,0);
			\draw [color = black, line width =1pt] (0.166 ,0.08) arc (-37:270:0.2);
	\end{tikzpicture}})
=\epsilon^{w(\alpha_2)} \epsilon \raisebox{-.15in}{
	\begin{tikzpicture}
		\tikzset{->-/.style=
			{decoration={markings,mark=at position #1 with
					{\arrow{latex}}},postaction={decorate}}}
		\filldraw[draw=white,fill=gray!20] (-1,-0.35) rectangle (0.6, 0.65);
		\draw [line width =1pt,decoration={markings, mark=at position 0.5 with {\arrow{>}}},postaction={decorate}](-1,0)--(-0.25,0);
		\draw [color = black, line width =1pt](0,0)--(0.6,0);
		\draw [color = black, line width =1pt] (0.166 ,0.08) arc (-37:270:0.2);
\end{tikzpicture}}
	= \epsilon^{w(\alpha_2)} \epsilon t_u
	\raisebox{-.15in}{
		\begin{tikzpicture}
			\tikzset{->-/.style=
				{decoration={markings,mark=at position #1 with
						{\arrow{latex}}},postaction={decorate}}}
			\filldraw[draw=white,fill=gray!20] (-1,-0.5) rectangle (0.6, 0.5);
			\draw [line width =1pt,decoration={markings, mark=at position 0.5 with {\arrow{>}}},postaction={decorate}](-1,0)--(-0.25,0);
			\draw [color = black, line width =1pt](-0.25,0)--(0.6,0);
	\end{tikzpicture}}=\varphi_{\epsilon}(t_v\raisebox{-.15in}{
	\begin{tikzpicture}
	\tikzset{->-/.style=
		{decoration={markings,mark=at position #1 with
				{\arrow{latex}}},postaction={decorate}}}
	\filldraw[draw=white,fill=gray!20] (-1,-0.35) rectangle (0.6, 0.65);
	\draw [line width =1pt,decoration={markings, mark=at position 0.5 with {\arrow{>}}},postaction={decorate}](-1,0)--(-0.25,0);
	\draw [color = black, line width =1pt](0,0)--(0.6,0);
	\draw [color = black, line width =1pt] (0.166 ,0.08) arc (-37:270:0.2);
\end{tikzpicture}}).
	\end{align*}

	It is trivial that $\varphi_{\epsilon}$ preserves relation \eqref{w.unknot}.

Relation \eqref{wzh.four}: We use $\alpha_3$ to denote the $n$-web diagram on the left-hand side of relation \eqref{wzh.four}. Then we have 
	\begin{align*}
	&	\varphi_{\epsilon}(
	\raisebox{-.30in}{
		\begin{tikzpicture}
			\tikzset{->-/.style=
				{decoration={markings,mark=at position #1 with
						{\arrow{latex}}},postaction={decorate}}}
			\filldraw[draw=white,fill=gray!20] (-1,-0.7) rectangle (1.2,1.3);
			\draw [line width =1pt,decoration={markings, mark=at position 0.5 with {\arrow{>}}},postaction={decorate}](-1,1)--(0,0);
			\draw [line width =1pt,decoration={markings, mark=at position 0.5 with {\arrow{>}}},postaction={decorate}](-1,0)--(0,0);
			\draw [line width =1pt,decoration={markings, mark=at position 0.5 with {\arrow{>}}},postaction={decorate}](-1,-0.4)--(0,0);
			\draw [line width =1pt,decoration={markings, mark=at position 0.5 with {\arrow{<}}},postaction={decorate}](1.2,1)  --(0.2,0);
			\draw [line width =1pt,decoration={markings, mark=at position 0.5 with {\arrow{<}}},postaction={decorate}](1.2,0)  --(0.2,0);
			\draw [line width =1pt,decoration={markings, mark=at position 0.5 with {\arrow{<}}},postaction={decorate}](1.2,-0.4)--(0.2,0);
			\node  at(-0.8,0.5) {$\vdots$};
			\node  at(1,0.5) {$\vdots$};
	\end{tikzpicture}})
=\epsilon^{w(\alpha_3)}
\raisebox{-.30in}{
	\begin{tikzpicture}
		\tikzset{->-/.style=
			{decoration={markings,mark=at position #1 with
					{\arrow{latex}}},postaction={decorate}}}
		\filldraw[draw=white,fill=gray!20] (-1,-0.7) rectangle (1.2,1.3);
		\draw [line width =1pt,decoration={markings, mark=at position 0.5 with {\arrow{>}}},postaction={decorate}](-1,1)--(0,0);
		\draw [line width =1pt,decoration={markings, mark=at position 0.5 with {\arrow{>}}},postaction={decorate}](-1,0)--(0,0);
		\draw [line width =1pt,decoration={markings, mark=at position 0.5 with {\arrow{>}}},postaction={decorate}](-1,-0.4)--(0,0);
		\draw [line width =1pt,decoration={markings, mark=at position 0.5 with {\arrow{<}}},postaction={decorate}](1.2,1)  --(0.2,0);
		\draw [line width =1pt,decoration={markings, mark=at position 0.5 with {\arrow{<}}},postaction={decorate}](1.2,0)  --(0.2,0);
		\draw [line width =1pt,decoration={markings, mark=at position 0.5 with {\arrow{<}}},postaction={decorate}](1.2,-0.4)--(0.2,0);
		\node  at(-0.8,0.5) {$\vdots$};
		\node  at(1,0.5) {$\vdots$};
\end{tikzpicture}}\\
=&\epsilon^{w(\alpha_3)}(-q_u)^{\frac{n(n-1)}{2}}\cdot \sum_{\sigma\in \fS}
	(-q_u^{\frac{1-n}n})^{\ell(\sigma)} \raisebox{-.30in}{
		\begin{tikzpicture}
			\tikzset{->-/.style=
				{decoration={markings,mark=at position #1 with
						{\arrow{latex}}},postaction={decorate}}}
			\filldraw[draw=white,fill=gray!20] (-1,-0.7) rectangle (1.2,1.3);
			\draw [line width =1pt,decoration={markings, mark=at position 0.5 with {\arrow{>}}},postaction={decorate}](-1,1)--(0,0);
			\draw [line width =1pt,decoration={markings, mark=at position 0.5 with {\arrow{>}}},postaction={decorate}](-1,0)--(0,0);
			\draw [line width =1pt,decoration={markings, mark=at position 0.5 with {\arrow{>}}},postaction={decorate}](-1,-0.4)--(0,0);
			\draw [line width =1pt,decoration={markings, mark=at position 0.5 with {\arrow{<}}},postaction={decorate}](1.2,1)  --(0.2,0);
			\draw [line width =1pt,decoration={markings, mark=at position 0.5 with {\arrow{<}}},postaction={decorate}](1.2,0)  --(0.2,0);
			\draw [line width =1pt,decoration={markings, mark=at position 0.5 with {\arrow{<}}},postaction={decorate}](1.2,-0.4)--(0.2,0);
			\node  at(-0.8,0.5) {$\vdots$};
			\node  at(1,0.5) {$\vdots$};
			\filldraw[draw=black,fill=gray!20,line width =1pt]  (0.1,0.3) ellipse (0.4 and 0.7);
			\node  at(0.1,0.3){$\sigma_{+}$};
	\end{tikzpicture}}\\=&(-q_v)^{\frac{n(n-1)}{2}}\cdot \sum_{\sigma\in \fS}
(-q_v^{\frac{1-n}n})^{\ell(\sigma)} \epsilon^{\ell(\sigma)+w(\alpha_3)} \raisebox{-.30in}{
\begin{tikzpicture}
\tikzset{->-/.style=
	{decoration={markings,mark=at position #1 with
			{\arrow{latex}}},postaction={decorate}}}
\filldraw[draw=white,fill=gray!20] (-1,-0.7) rectangle (1.2,1.3);
\draw [line width =1pt,decoration={markings, mark=at position 0.5 with {\arrow{>}}},postaction={decorate}](-1,1)--(0,0);
\draw [line width =1pt,decoration={markings, mark=at position 0.5 with {\arrow{>}}},postaction={decorate}](-1,0)--(0,0);
\draw [line width =1pt,decoration={markings, mark=at position 0.5 with {\arrow{>}}},postaction={decorate}](-1,-0.4)--(0,0);
\draw [line width =1pt,decoration={markings, mark=at position 0.5 with {\arrow{<}}},postaction={decorate}](1.2,1)  --(0.2,0);
\draw [line width =1pt,decoration={markings, mark=at position 0.5 with {\arrow{<}}},postaction={decorate}](1.2,0)  --(0.2,0);
\draw [line width =1pt,decoration={markings, mark=at position 0.5 with {\arrow{<}}},postaction={decorate}](1.2,-0.4)--(0.2,0);
\node  at(-0.8,0.5) {$\vdots$};
\node  at(1,0.5) {$\vdots$};
\filldraw[draw=black,fill=gray!20,line width =1pt]  (0.1,0.3) ellipse (0.4 and 0.7);
\node  at(0.1,0.3){$\sigma_{+}$};
\end{tikzpicture}}\\=&\varphi_{\epsilon}((-q_v)^{\frac{n(n-1)}{2}}\cdot \sum_{\sigma\in \fS}
(-q_v^{\frac{1-n}n})^{\ell(\sigma)} \raisebox{-.30in}{
\begin{tikzpicture}
\tikzset{->-/.style=
	{decoration={markings,mark=at position #1 with
			{\arrow{latex}}},postaction={decorate}}}
\filldraw[draw=white,fill=gray!20] (-1,-0.7) rectangle (1.2,1.3);
\draw [line width =1pt,decoration={markings, mark=at position 0.5 with {\arrow{>}}},postaction={decorate}](-1,1)--(0,0);
\draw [line width =1pt,decoration={markings, mark=at position 0.5 with {\arrow{>}}},postaction={decorate}](-1,0)--(0,0);
\draw [line width =1pt,decoration={markings, mark=at position 0.5 with {\arrow{>}}},postaction={decorate}](-1,-0.4)--(0,0);
\draw [line width =1pt,decoration={markings, mark=at position 0.5 with {\arrow{<}}},postaction={decorate}](1.2,1)  --(0.2,0);
\draw [line width =1pt,decoration={markings, mark=at position 0.5 with {\arrow{<}}},postaction={decorate}](1.2,0)  --(0.2,0);
\draw [line width =1pt,decoration={markings, mark=at position 0.5 with {\arrow{<}}},postaction={decorate}](1.2,-0.4)--(0.2,0);
\node  at(-0.8,0.5) {$\vdots$};
\node  at(1,0.5) {$\vdots$};
\filldraw[draw=black,fill=gray!20,line width =1pt]  (0.1,0.3) ellipse (0.4 and 0.7);
\node  at(0.1,0.3){$\sigma_{+}$};
\end{tikzpicture}}).
	\end{align*}
	
	Obviously, the inverse of $\varphi_{\epsilon}$ is $\varphi_{\epsilon^{-1}}:S_n(\Sigma|u^2)\rightarrow S_n(\Sigma|v^2)$.
\end{proof}

The map $\varphi_{\epsilon}$ is not an algebra homorphism because $\epsilon^{w(\alpha\beta)}\neq \epsilon^{w(\alpha)}\epsilon^{w(\beta)}$ in general. The following Corollary shows that it restricts to an algebra isomorphism from $S_n(\Sigma |v^2)_m$ to $S_n(\Sigma |u^2)_m$. 

\begin{corollary}\label{skein_cor}
	Let $\Sigma$ be a surface with $\partial \Sigma =\emptyset$, let $m,n$ be two positive integers with $m\mid n$, and let $u,\epsilon\in R$ be two invertible elements in $R$  such that $u^2 = \epsilon v^2$, $\epsilon^m = 1$. Then the linear isomorphism $\varphi_{\epsilon}:S_n(\Sigma|v^2)\rightarrow S_n(\Sigma| u^2)$ restricts to an algebra isomorphism $\varphi_{\epsilon}|_{S_n(\Sigma|v)_m}:S_n(\Sigma|v^2)_m\rightarrow S_n(\Sigma|u^2)_m$.
\end{corollary}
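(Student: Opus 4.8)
The plan is to isolate exactly the obstruction to $\varphi_{\epsilon}$ being multiplicative and then to show that this obstruction vanishes on the subalgebra $S_n(\Sigma|v^2)_m$. The starting point is a crossing-count bookkeeping for the stacking product. Given two $n$-web diagrams $\alpha,\beta$ in $\Sigma$, let $\gamma$ be the diagram obtained by stacking a representative of $\alpha$ above one of $\beta$, so that $\gamma$ represents $\alpha\beta$. The crossings of $\gamma$ split into three disjoint families that together exhaust all crossings: the self-crossings of $\alpha$, the self-crossings of $\beta$, and the mixed crossings, each of which involves one strand from $\alpha$ and one from $\beta$. Since the sign $w(p)$ of a crossing depends only on the local picture of Figure \ref{fg1}, summing over all crossings of $\gamma$ yields, as an identity of integers,
\begin{equation*}
w(\gamma) = w(\alpha) + w(\beta) + W(\alpha,\beta),
\end{equation*}
where $W(\alpha,\beta)$ denotes the integer sum of $w(p)$ over the mixed crossings, whose reduction modulo $n$ is precisely $w(\alpha,\beta)$.

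From here I would derive a twisted multiplicativity formula. Computing in $S_n(\Sigma|u^2)$, one has $\varphi_{\epsilon}(\alpha\beta)=\epsilon^{w(\gamma)}\gamma$ and $\varphi_{\epsilon}(\alpha)\varphi_{\epsilon}(\beta)=\epsilon^{w(\alpha)+w(\beta)}\gamma$, since both $\varphi_{\epsilon}(\alpha)\varphi_{\epsilon}(\beta)$ and $\alpha\beta$ are represented by the same stacked diagram $\gamma$. Substituting the displayed decomposition gives
\begin{equation*}
\varphi_{\epsilon}(\alpha\beta)=\epsilon^{W(\alpha,\beta)}\,\varphi_{\epsilon}(\alpha)\,\varphi_{\epsilon}(\beta).
\end{equation*}
Because $m\mid n$ and $\epsilon^{m}=1$, we also have $\epsilon^{n}=1$, so $\epsilon^{W(\alpha,\beta)}=\epsilon^{w(\alpha,\beta)}$ and the twisting scalar depends only on $w(\alpha,\beta)\in\{0,\dots,n-1\}$. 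This makes the remark preceding the corollary precise: the failure of multiplicativity is exactly the factor $\epsilon^{w(\alpha,\beta)}$.

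The last step is to annihilate this factor on the subalgebra. Let $\alpha,\beta$ be basis diagrams of $S_n(\Sigma|v^2)_m$, that is, diagrams satisfying condition \eqref{condition} with $k=m$. Applying that condition to $\alpha$ with $\alpha'=\beta$ shows that $w(\alpha,\beta)$ is a multiple of $m$, whence $\epsilon^{w(\alpha,\beta)}=(\epsilon^{m})^{w(\alpha,\beta)/m}=1$ and $\varphi_{\epsilon}(\alpha\beta)=\varphi_{\epsilon}(\alpha)\varphi_{\epsilon}(\beta)$; bilinearity extends this to all of $S_n(\Sigma|v^2)_m$. It remains to verify that the restriction is a well-defined bijection onto $S_n(\Sigma|u^2)_m$. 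Well-definedness of the target is immediate, since $w(\alpha,\alpha')$ and $w(\alpha',\alpha)$ are purely combinatorial invariants of diagrams, independent of the quantum parameter, so $\varphi_{\epsilon}(\alpha)=\epsilon^{w(\alpha)}\alpha$ lies in the span defining $S_n(\Sigma|u^2)_m$ exactly when $\alpha$ lies in $S_n(\Sigma|v^2)_m$. Bijectivity follows from Theorem \ref{skein}: the global inverse $\varphi_{\epsilon^{-1}}$ satisfies $(\epsilon^{-1})^{m}=1$, so the same argument makes it restrict to an algebra homomorphism $S_n(\Sigma|u^2)_m\to S_n(\Sigma|v^2)_m$ inverse to $\varphi_{\epsilon}|$.

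The main obstacle I anticipate is bookkeeping rather than conceptual: making the decomposition $w(\gamma)=w(\alpha)+w(\beta)+W(\alpha,\beta)$ fully rigorous by choosing $\gamma$ as the literal stacking, so that the three crossing families are genuinely disjoint and exhaustive, and then keeping careful track of the distinction between the integer $W(\alpha,\beta)$ and its mod-$n$ value $w(\alpha,\beta)$, which is exactly what allows $\epsilon^{n}=1$ to simplify the twist. Once the twisted multiplicativity identity is established, the hypotheses $\epsilon^{m}=1$ and $m\mid n$ do precisely the work of forcing the twist to be trivial on $S_n(\Sigma|v^2)_m$.
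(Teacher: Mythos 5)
Your proposal is correct and follows essentially the same route as the paper's proof: the identity $w(\alpha\alpha') = w(\alpha) + w(\alpha') + w(\alpha,\alpha') \ (\mathrm{mod}\ n)$, combined with $m \mid w(\alpha,\alpha')$, $m \mid n$, and $\epsilon^m = 1$, forces $\epsilon^{w(\alpha\alpha')} = \epsilon^{w(\alpha)+w(\alpha')}$, which is exactly the paper's computation. Your explicit separation of the integer mixed-crossing count $W(\alpha,\beta)$ from its mod-$n$ reduction, and the resulting twisted multiplicativity formula $\varphi_{\epsilon}(\alpha\beta)=\epsilon^{w(\alpha,\beta)}\varphi_{\epsilon}(\alpha)\varphi_{\epsilon}(\beta)$, is just a more carefully spelled-out version of the same argument.
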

\begin{proof}
	It is trivial that $\varphi_{\epsilon}$ restricts to a linear isomorphism $\varphi_{\epsilon}|_{S_n(\Sigma|v^2)_m}$. Let $\alpha, \alpha'$ be two $n$-web diagrams in $\Sigma$ that satisfy the condition in equation \eqref{condition}. We know that $w(\alpha,\alpha') = (w(\alpha\alpha')-w(\alpha)-w(\alpha'))(\text{mod}\;n)$. Since $m\mid w(\alpha,\alpha')$ and $m\mid n$, we have that $m\mid (w(\alpha\alpha')-w(\alpha)-w(\alpha'))$. Thus $$\varphi_{\epsilon}(\alpha\alpha') = \epsilon^{w(\alpha\alpha')}\alpha\alpha' = \epsilon^{w(\alpha)+w(\alpha')}\alpha\alpha' = \varphi_{\epsilon}(\alpha)\varphi_{\epsilon}(\alpha').$$
\end{proof}

\begin{corollary}
	Suppose that $\Sigma$ is a surface with an empty boundary, and $2\mid n$. Then the linear isomorphism $\varphi_{-1}:S_n(\Sigma|v^2)\rightarrow S_n(\Sigma| -v^2)$ is an algebra isomorphism.
\end{corollary}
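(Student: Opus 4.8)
The plan is to obtain this as the $m=2$, $\epsilon=-1$ case of Corollary \ref{skein_cor}, upgraded from a statement about a subalgebra to a statement about the whole algebra. First note that $\epsilon=-1$ is admissible: $(-1)^{2}=1$, and since $2\mid n$ we also have $(-1)^{n}=1$, so $\varphi_{-1}\colon S_n(\Sigma|v^2)\to S_n(\Sigma|{-v^2})$ is the well-defined $R$-linear isomorphism furnished by Theorem \ref{skein} (with $u^{2}=-v^{2}$). Corollary \ref{skein_cor} then already gives that $\varphi_{-1}$ restricts to an \emph{algebra} isomorphism on $S_n(\Sigma|v^2)_2$. Thus the entire content to be proved is that no grading obstruction survives, i.e.\ that $\varphi_{-1}$ is multiplicative on all of $S_n(\Sigma|v^2)$, equivalently that the subalgebra defined in \eqref{condition} satisfies $S_n(\Sigma|v^2)_2=S_n(\Sigma|v^2)$.

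Next I would isolate exactly what must be checked. Repeating the computation from the proof of Corollary \ref{skein_cor} for arbitrary $n$-web diagrams $\alpha,\beta$, one has $\varphi_{-1}(\alpha\beta)=(-1)^{w(\alpha\beta)}\alpha\beta$ and $\varphi_{-1}(\alpha)\varphi_{-1}(\beta)=(-1)^{w(\alpha)+w(\beta)}\alpha\beta$, so these coincide if and only if $w(\alpha\beta)-w(\alpha)-w(\beta)$ is even. Because $w(\alpha,\beta)\equiv w(\alpha\beta)-w(\alpha)-w(\beta)\ (\text{mod}\;n)$ and $2\mid n$, this is equivalent to the single parity statement that $w(\alpha,\beta)$ is even for every pair $\alpha,\beta$. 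To attack this I would pass from the signed crossing count $w(\alpha,\beta)$ to the geometric one, which has the same parity, and then reinterpret it through the class that an oriented $n$-web determines in $H_1(\Sigma;\BZ/n)$ — the $n$-valent sources and sinks contribute a multiple of $n$ to the boundary and hence vanish modulo $n$ — together with its intersection pairing, reduced modulo $2$ using $2\mid n$.

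The hard part is exactly this last parity claim; everything else is a mechanical repetition of Theorem \ref{skein} and Corollary \ref{skein_cor}. I expect the delicate point to be verifying that the modulo-$2$ reduction of the $\BZ/n$-valued intersection pairing contributes nothing, since this is the only place where the hypothesis $2\mid n$ can be made to do genuine work, so I would concentrate the whole argument on that homological input. Granting it, well-definedness of $\varphi_{-1}$ is supplied by Theorem \ref{skein}, multiplicativity by the displayed identity, and — since the inverse of $\varphi_{-1}$ is again $\varphi_{-1}$ because $(-1)^{-1}=-1$ — we conclude that $\varphi_{-1}$ is an algebra isomorphism.
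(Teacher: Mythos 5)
Your reduction is exactly the paper's: the paper's entire proof of this corollary is the one-line assertion that, because $\partial\Sigma=\emptyset$, one has $S_n(\Sigma|v^2)_2=S_n(\Sigma|v^2)$, after which Corollary \ref{skein_cor} applies. So you have matched the intended skeleton and correctly located where all the content sits, namely in the parity claim that $w(\alpha,\beta)$ is even for every pair of $n$-web diagrams. But your proposal stops precisely there: the claim is explicitly \emph{granted}, so as a proof it establishes nothing beyond what Theorem \ref{skein} and Corollary \ref{skein_cor} already give. That is a genuine gap, not a verification left to the reader.

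Worse, your own homological reformulation shows that the granted claim cannot be proved, because it is false in general. For webs without endpoints the $n$-valent sources and sinks make the underlying chain a cycle mod $n$, and the moves of Figure \ref{fg} identify $w(\alpha,\beta)\in\mathbb{Z}/n$ with the $\mathbb{Z}/n$-valued intersection pairing of $[\alpha],[\beta]\in H_1(\Sigma;\mathbb{Z}/n)$; since $2\mid n$, its parity is the mod-$2$ intersection number of the reduced classes in $H_1(\Sigma;\mathbb{Z}/2)$. This pairing does not ``contribute nothing'': on the torus (a surface with empty boundary), take $\alpha,\beta$ framed knots representing the two standard generators; a diagram $\gamma$ of $\alpha\beta$ has a single mutual crossing, so $w(\alpha,\beta)=\pm1\ (\mathrm{mod}\ n)$ is odd, whence $\varphi_{-1}(\alpha\beta)=-\gamma$ while $\varphi_{-1}(\alpha)\varphi_{-1}(\beta)=\gamma$, and $2\gamma\neq 0$ (already for $n=2$, where the skein algebra of the torus is a domain and $\gamma\neq 0$). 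So the step you flagged as delicate is exactly where the argument breaks; and, it should be said, the paper's own assertion $S_n(\Sigma|v^2)_2=S_n(\Sigma|v^2)$ fails on the same example, so your honest flag exposes a lacuna the paper glosses over. The identity, and hence multiplicativity of the bare sign $(-1)^{w(\alpha)}$, holds only for surfaces with vanishing mod-$2$ intersection form (e.g.\ punctured spheres); for general $\Sigma$ one must replace $(-1)^{w(\alpha)}$ by a spin-structure weight as in the paper's spin-structure section, where Barrett's quadratic refinement is designed precisely to absorb the intersection parity that obstructs your (and the paper's) argument here.
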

\begin{proof}
	Since $\Sigma$ is a surface with an empty boundary, we have that $S_n(\Sigma|v^2)_2 = S_n(\Sigma|v^2)$. Then Corollary \ref{skein_cor} completes the proof.
\end{proof}

\subsection{On stated $SL_n$-skein algebras}
Let $\Sigma$ be a pb surface, let $m,n$ be two positive integers with $m\mid n$, and let $\epsilon\in R$ with $\epsilon^{2m} = 1$. In this subsection, we will construct an $R$-linear isomorphism $\varphi_{\epsilon}:S_n(\Sigma,v)\rightarrow S_n(\Sigma,\epsilon v)$.  We still use the notation $\varphi_{\epsilon}$ for this  $R$-linear isomorphism because it coincides with the  $R$-linear isomorphism built in Theorem \ref{skein} when $\partial \Sigma=\emptyset$.
We present Theorem \ref{skein} and the subsequent theorems separately to aid the reader's understanding and to simplify the proofs of the following results.

Recall that,
 for a 
 stated $n$-web diagram $\alpha$, we use $e(\alpha)$ to denote the number of endpoints of $\alpha$,   use $t(\alpha)$ to denote the number of endpoints  of $\alpha$ that are pointing towards  the boundary of the pb surface, and use $p(\alpha)$ to denote the number of sinks and sources of $\alpha$.

\def\M{M,\mathcal{N}}
\def\MN{(M,\mathcal{N})}

In the following theorem, we construct an $R$-linear isomorphism from $S_n(\Sigma,v)$ to $S_n(\Sigma,\epsilon v)$ when $\epsilon^m = -1$ and $m\mid n$. We will show that this  $R$-linear isomorphism  restricts an algebra isomorphism from $S_n(\Sigma,v)_m$ to $S_n(\Sigma,\epsilon v)_m$.

\begin{theorem}\label{stated1}
	Let $\Sigma$ be a pb surface, let $k,m,n$ be three positive integers with $n=km$, and let $\epsilon\in R$ with $\epsilon^{m} = -1$. Then there exists  an $R$-linear isomorphism $\varphi_{\epsilon}:S_n(\Sigma,v)\rightarrow S_n(\Sigma,\epsilon v)$, defined by
	\begin{equation*}
		\varphi_{\epsilon}(\alpha)=
		\begin{cases}
			\epsilon^{2w(\alpha)+\frac{e(\alpha)}{2}} \alpha& k \text{ is even} \\
			(-\epsilon)^{2w(\alpha)+\frac{e(\alpha)}{2}} \alpha& k \text{ is odd and }m\text{ is even}\\
			(-\epsilon)^{2w(\alpha)+t(\alpha)} \alpha& k \text{ is odd, }m\text{ is odd, and }\frac{k+m}{2} \text{ is odd}\\
			(-1)^{p(\alpha)}(-\epsilon)^{2w(\alpha)+t(\alpha)} \alpha& k \text{ is odd, }m\text{ is odd, and }\frac{k+m}{2} \text{ is even,}
		\end{cases}
	\end{equation*}
	where $\alpha\in S_n(\Sigma,v)$ is a negatively ordered stated $n$-web diagram.  In particular $\varphi_{\epsilon}(\alpha) = \epsilon^{2(w(\alpha))}\alpha$ if $\alpha$ is a negatively ordered stated $n$-web diagram without endpoints.
\end{theorem}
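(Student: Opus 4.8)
The plan is to imitate the proof of Theorem~\ref{skein}: define $\varphi_{\epsilon}(\alpha)=\lambda(\alpha)\,\alpha$ on negatively ordered stated $n$-web diagrams, where $\lambda(\alpha)\in R$ is the invertible scalar given by the four cases, then check that $\lambda$ is an isotopy invariant, that $\varphi_{\epsilon}$ respects each defining relation \eqref{w.cross}--\eqref{wzh.eight}, and that $\varphi_{\epsilon^{-1}}$ is a two-sided inverse. First I would record how the structure constants transform under $v\mapsto\epsilon v$. Since $\epsilon^{2m}=1$ and $m\mid n$ we have $\epsilon^{2n}=1$ and $\epsilon^{n}=(\epsilon^{m})^{k}=(-1)^{k}$, and writing each constant as a power of the generator $v$ yields $q_{\epsilon v}=q_{v}$, $q_{\epsilon v}^{1/n}=\epsilon^{2}q_{v}^{1/n}$, $t_{\epsilon v}=\epsilon^{-2}t_{v}$, $c_{i,\epsilon v}=\epsilon^{\,n-1}c_{i,v}$ and $a_{\epsilon v}=\epsilon^{A}a_{v}$ with $A=\tfrac{n(n+1-2n^{2})}{2}$. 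For well-definedness I would use that $w(\alpha)\ (\mathrm{mod}\ n)$ is an isotopy invariant (already noted before Theorem~\ref{skein}) together with the evident invariance of $e(\alpha),t(\alpha),p(\alpha)$ under the five moves of Figure~\ref{fg}; because $\epsilon^{2n}=1$ the factor $\epsilon^{2w(\alpha)}$ depends only on $w(\alpha)\bmod n$, and $e(\alpha)/2$ appears exactly in the two cases where $n$ is even, in which $e(\alpha)$ is always even.

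I would then dispatch the relations in which $p(\alpha)$ is unchanged, namely \eqref{w.cross}, \eqref{w.twist}, \eqref{w.unknot}, \eqref{wzh.six}, \eqref{wzh.seven} and \eqref{wzh.eight}. After cancelling the factor coming from the unchanged invariants, each reduces to a scalar identity. For \eqref{w.cross} and \eqref{wzh.eight} a crossing changes $w$ by $\pm1$, producing $\epsilon^{\pm2}=q_{\epsilon v}^{\pm1/n}/q_{v}^{\pm1/n}$, so the verification is identical to Theorem~\ref{skein}; \eqref{w.twist} matches the single self-crossing against $t_{\epsilon v}/t_{v}=\epsilon^{-2}$, and \eqref{w.unknot} is immediate since its coefficient is a Laurent polynomial in $q_{v}=q_{\epsilon v}$. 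The boundary relations \eqref{wzh.six} and \eqref{wzh.seven} change $e(\alpha)$ by $\mp2$ (equivalently $t(\alpha)$ by $\mp1$) and carry $c_{i,v}$ and $c_{\bar i,v}^{-1}$; here one uses $\epsilon^{\pm(n-1)}=(-1)^{k}\epsilon^{\mp1}$ to match the required ratio with the $\epsilon^{\mp1}$ produced by $e(\alpha)/2$ or $t(\alpha)$, the extra sign $(-1)^{k}$ being exactly what the factor $(-\epsilon)$ supplies in the odd-$k$ cases.

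The heart of the proof, and the expected main obstacle, is the pair of vertex relations \eqref{wzh.four} and \eqref{wzh.five}, the only relations that change $p(\alpha)$. In \eqref{wzh.four} two vertices disappear ($\Delta p=-2$) and a positive $\sigma$-braid with $\ell(\sigma)$ crossings appears; the coefficient $(-q_{v}^{(1-n)/n})^{\ell(\sigma)}$ involves $q_{v}^{1/n}$, so under $v\mapsto\epsilon v$ it is multiplied by $\epsilon^{2\ell(\sigma)}$, and this is matched exactly by the $\epsilon^{2\ell(\sigma)}$ coming from the $\ell(\sigma)$ new crossings recorded in $w$, while $(-1)^{\Delta p}=1$ is harmless. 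Relation \eqref{wzh.five} is the delicate one: it removes a single vertex ($\Delta p=-1$), creates $n$ stated boundary endpoints ($\Delta e=n$, $\Delta t\in\{0,n\}$) without new crossings, and carries the coefficient $a_{v}$. Matching coefficients reduces the claim to checking that the scalar the formula assigns to this change equals $a_{\epsilon v}/a_{v}=\epsilon^{A}$. For $n$ odd one computes $\epsilon^{A}=(-1)^{(n+3)/2}$ and $(-\epsilon)^{\Delta t}=1$ (using $(-\epsilon)^{n}=1$, so the orientation of the vertex is irrelevant), whence the identity holds iff the formula contributes an extra $-1$ exactly when $n\equiv3\pmod 4$; the elementary congruence that, for odd $k,m$, one has $(k+m)/2$ even $\iff n\equiv3\pmod 4$ is precisely what makes the correction $(-1)^{p(\alpha)}$ present in the fourth case and absent in the third. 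The $n$ even cases are handled analogously through the $e(\alpha)/2$-exponent. Checking this identity in all four cases is the only genuinely computational step and is what dictates both the case division and the correction factors in the statement.

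Finally, $\varphi_{\epsilon^{-1}}$ is defined since $(\epsilon^{-1})^{m}=-1$, and it is a two-sided inverse of $\varphi_{\epsilon}$ because $\lambda_{\epsilon}(\alpha)\lambda_{\epsilon^{-1}}(\alpha)=1$ for every $\alpha$, using $(-\epsilon)^{-1}=-\epsilon^{-1}$ and $(-1)^{2p(\alpha)}=1$; hence $\varphi_{\epsilon}$ is an $R$-linear isomorphism. For the last assertion, if $\alpha$ has no endpoints then $e(\alpha)=t(\alpha)=0$, and since every edge of $\alpha$ then runs from a source to a sink the numbers of sources and sinks agree, so $p(\alpha)$ is even and $(-1)^{p(\alpha)}=1$; consequently all four formulas collapse to $\varphi_{\epsilon}(\alpha)=\epsilon^{2w(\alpha)}\alpha$.
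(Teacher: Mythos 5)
Your proposal is correct and follows essentially the same route as the paper's proof: the same rescaling map $\alpha\mapsto\lambda(\alpha)\alpha$, reduction of relations \eqref{w.cross}--\eqref{wzh.four} to the argument of Theorem \ref{skein} via $(\epsilon^2)^n=1$, the transformation laws $c_{i,\epsilon v}=\epsilon^{n-1}c_{i,v}$ and $a_{\epsilon v}=\epsilon^{A}a_{v}$ (your uniform exponent $A=\tfrac{n(n+1-2n^2)}{2}$ reproduces exactly the paper's case values $(-1)^{\lambda}$, $(-\epsilon)^{kl}$, $\pm1$, including the congruence $\tfrac{k+m}{2}$ even $\iff n\equiv 3\pmod 4$ governing the $(-1)^{p(\alpha)}$ correction), and invertibility via $\varphi_{\epsilon^{-1}}$. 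Your only addition beyond the paper is the explicit source/sink counting argument showing $p(\alpha)$ is even for webs without endpoints, which the paper asserts implicitly.
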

Note that when $n$ is even, the non-negative integer $e(\alpha)$ is always even. 

\begin{proof}
		It is obvious that $\varphi_{\epsilon}$ preserves the five moves in Figure \ref{fg}. The proof for Theorem \ref{skein} implies that $\varphi_{\epsilon}$ preserves relations \eqref{w.cross}-\eqref{wzh.four} because $(\epsilon^2)^m =1$ and $m\mid n$.  Trivially $\varphi_{\epsilon^{-1}}:S_n(\Sigma,\epsilon v)\rightarrow S_n(\Sigma,v)$ is the inverse of $\varphi_{\epsilon}$ if $\varphi_{\epsilon}$ is a well-defined $R$-linear map.
		Then it suffices to show that $\varphi_{\epsilon}$ preserves relations \eqref{wzh.five}-\eqref{wzh.eight}.
	
	Let $u = \epsilon v$. We have $q_u = u^{2n} = \epsilon^{2n}v^{2n} = v^{2n} = q_v$, and $c_{i,u} = (-q_u)^{n-i}(q_u^{\frac{1}{2n}})^{n-1}
	= \epsilon^{n-1 }(-q_v)^{n-i}(q_v^{\frac{1}{2n}})^{n-1}  =\epsilon^{n-1 } c_{i,v}$ for $1\leq i\leq n$.

	 \text{\bf Case 1}: $k$ is even.  Suppose $k = 2\lambda$. We have $\epsilon^n = \epsilon^{mk} = (-1)^k = 1$. Then $c_{i,u}=\epsilon^{n-1 } c_{i,v} = \epsilon^{-1}c_{i,v}$ for $1\leq i\leq n$. Through direct calculations, we obtain $a_u = (-1)^\lambda a_v$.

	 Relation \eqref{wzh.five}: Let $U_1$ be the open square in relation \eqref{wzh.five}, and let $\alpha_1$ and $\alpha_{1,\sigma}$ be two negatively ordered stated $n$-web diagrams such that $\alpha_1$ and $\alpha_{1,\sigma}$ are identical outside of $U_1$. Furthermore, $\alpha_1 \cap U_1$ (resp. $\alpha_{1,\sigma} \cap U_1$) corresponds to the diagram on the left-hand (resp. right-hand) side of relation \eqref{wzh.five}. Then we have $w(\alpha_{1,\sigma})=w(\alpha_1)$ and $e(\alpha_{1,\sigma}) = e(\alpha_1)+n$. Thus
	 \begin{align*}
	 	\varphi_{\epsilon}(a_v\sum_{\sigma\in \fS}(-q_v)^{\ell(\sigma)}\alpha_{1,\sigma})& = (-1)^{\lambda}
	 	a_u\sum_{\sigma\in \fS}(-q_u)^{\ell(\sigma)} \epsilon^{2w(\alpha_1) +\frac{e(\alpha_1)+n}{2}} \alpha_{1,\sigma}\\& = \epsilon^{2w(\alpha_1) +\frac{e(\alpha_1)}{2}}\alpha_1 = \varphi_{\epsilon}(\alpha_1).
	 \end{align*}
	 
	 Relation \eqref{wzh.six}: We have 
	 \begin{align*}
	 	\varphi_{\epsilon}(
	 	\raisebox{-.20in}{
	 		\begin{tikzpicture}
	 			\tikzset{->-/.style=
	 				{decoration={markings,mark=at position #1 with
	 						{\arrow{latex}}},postaction={decorate}}}
	 			\filldraw[draw=white,fill=gray!20] (-0.7,-0.7) rectangle (0,0.7);
	 			\draw [line width =1.5pt,decoration={markings, mark=at position 1 with {\arrow{>}}},postaction={decorate}](0,0.7)--(0,-0.7);
	 			\draw [color = black, line width =1pt] (0 ,0.3) arc (90:270:0.5 and 0.3);
	 			\node [right]  at(0,0.3) {$i$};
	 			\node [right] at(0,-0.3){$j$};
	 			\filldraw[fill=white,line width =0.8pt] (-0.5 ,0) circle (0.07);
	 	\end{tikzpicture}}) =\epsilon 
 	\raisebox{-.20in}{
 	\begin{tikzpicture}
 	\tikzset{->-/.style=
 		{decoration={markings,mark=at position #1 with
 				{\arrow{latex}}},postaction={decorate}}}
 	\filldraw[draw=white,fill=gray!20] (-0.7,-0.7) rectangle (0,0.7);
 	\draw [line width =1.5pt,decoration={markings, mark=at position 1 with {\arrow{>}}},postaction={decorate}](0,0.7)--(0,-0.7);
 	\draw [color = black, line width =1pt] (0 ,0.3) arc (90:270:0.5 and 0.3);
 	\node [right]  at(0,0.3) {$i$};
 	\node [right] at(0,-0.3){$j$};
 	\filldraw[fill=white,line width =0.8pt] (-0.5 ,0) circle (0.07);
 \end{tikzpicture}}   =  \delta_{\bar j,i }\, \epsilon\,  c_{i,u}\ \raisebox{-.20in}{
	 		\begin{tikzpicture}
	 			\tikzset{->-/.style=
	 				{decoration={markings,mark=at position #1 with
	 						{\arrow{latex}}},postaction={decorate}}}
	 			\filldraw[draw=white,fill=gray!20] (-0.7,-0.7) rectangle (0,0.7);
	 			\draw [line width =1.5pt](0,0.7)--(0,-0.7);
	 	\end{tikzpicture}}=\delta_{\bar j,i }\, c_{i,v}\ \raisebox{-.20in}{
	 	\begin{tikzpicture}
	 	\tikzset{->-/.style=
	 		{decoration={markings,mark=at position #1 with
	 				{\arrow{latex}}},postaction={decorate}}}
	 	\filldraw[draw=white,fill=gray!20] (-0.7,-0.7) rectangle (0,0.7);
	 	\draw [line width =1.5pt](0,0.7)--(0,-0.7);
 	\end{tikzpicture}}\,.
	 \end{align*}
	 
	 Relation \eqref{wzh.seven}:  Let $U_2$ be the open square in relation \eqref{wzh.seven}, and let $\alpha_2,\alpha_{2}'$ be two negatively ordered stated $n$-web diagrams such that $\alpha_2$ and $\alpha_{2}'$ are identical outside $U_2$ and $\alpha_2\cap U_2$ (resp. $\alpha_{2}'\cap U_2$) is like the picture on the left-hand (resp. right-hand) side of relation \eqref{wzh.seven}. Then we have $w(\alpha_2') = w(\alpha_{2})$ and $e(\alpha_{2}') = e(\alpha_2)+2$. Thus
	 \begin{align*}
	 	\varphi_{\epsilon}(\sum_{1\leq i\leq n}c_{\bar{i},v}^{-1}\alpha_2')
	 	=\sum_{1\leq i\leq n}\epsilon^{-1}c_{\bar{i},u}^{-1}\epsilon^{2w(\alpha_2)+\frac{e(\alpha_2)+2}{2}}\alpha_2' = \epsilon^{2w(\alpha_2)+\frac{e(\alpha_2)}{2}}\alpha_2 = \varphi_{\epsilon}(\alpha_2).
	 \end{align*}
	 
	 Relation \eqref{wzh.eight}:
	    Let $U_3$ be the open square in relation \eqref{wzh.eight}, and let $\alpha_3,\alpha_{3}',\alpha_3''$ be three negatively ordered stated $n$-web diagrams such that $\alpha_3,\alpha_{3}',\alpha_3''$ are identical to each other outside $U_3$ and $\alpha_3\cap U_3 = \raisebox{-.20in}{
	    	
	    	\begin{tikzpicture}
	    		\tikzset{->-/.style=
	    			
	    			{decoration={markings,mark=at position #1 with
	    					
	    					{\arrow{latex}}},postaction={decorate}}}
	    		\filldraw[draw=white,fill=gray!20] (-0,-0.2) rectangle (1, 1.2);
	    		\draw [line width =1.5pt,decoration={markings, mark=at position 1 with {\arrow{>}}},postaction={decorate}](1,1.2)--(1,-0.2);
	    		\draw [line width =1pt](0.6,0.6)--(1,1);
	    		\draw [line width =1pt](0.6,0.4)--(1,0);
	    		\draw[line width =1pt] (0,0)--(0.4,0.4);
	    		\draw[line width =1pt] (0,1)--(0.4,0.6);
	    		\draw[line width =1pt] (0.4,0.6)--(0.6,0.4);
	    		\filldraw[fill=white,line width =0.8pt] (0.2 ,0.2) circle (0.07);
	    		\filldraw[fill=white,line width =0.8pt] (0.2 ,0.8) circle (0.07);
	    		\node [right]  at(1,1) {$i$};
	    		\node [right]  at(1,0) {$j$};
	    	\end{tikzpicture}
	    }, \alpha_3'\cap U_3 = \raisebox{-.20in}{
	    
	    \begin{tikzpicture}
	    	\tikzset{->-/.style=
	    		
	    		{decoration={markings,mark=at position #1 with
	    				
	    				{\arrow{latex}}},postaction={decorate}}}
	    	\filldraw[draw=white,fill=gray!20] (-0,-0.2) rectangle (1, 1.2);
	    	\draw [line width =1.5pt,decoration={markings, mark=at position 1 with {\arrow{>}}},postaction={decorate}](1,1.2)--(1,-0.2);
	    	\draw [line width =1pt](0,0.8)--(1,0.8);
	    	\draw [line width =1pt](0,0.2)--(1,0.2);
	    	\filldraw[fill=white,line width =0.8pt] (0.2 ,0.8) circle (0.07);
	    	\filldraw[fill=white,line width =0.8pt] (0.2 ,0.2) circle (0.07);
	    	\node [right]  at(1,0.8) {$i$};
	    	\node [right]  at(1,0.2) {$j$};
	    \end{tikzpicture}
	    },\alpha_3''\cap U_3 = \raisebox{-.20in}{
	    
	    \begin{tikzpicture}
	    	\tikzset{->-/.style=
	    		
	    		{decoration={markings,mark=at position #1 with
	    				
	    				{\arrow{latex}}},postaction={decorate}}}
	    	\filldraw[draw=white,fill=gray!20] (-0,-0.2) rectangle (1, 1.2);
	    	\draw [line width =1.5pt,decoration={markings, mark=at position 1 with {\arrow{>}}},postaction={decorate}](1,1.2)--(1,-0.2);
	    	\draw [line width =1pt](0,0.8)--(1,0.8);
	    	\draw [line width =1pt](0,0.2)--(1,0.2);
	    	\filldraw[fill=white,line width =0.8pt] (0.2 ,0.8) circle (0.07);
	    	\filldraw[fill=white,line width =0.8pt] (0.2 ,0.2) circle (0.07);
	    	\node [right]  at(1,0.8) {$j$};
	    	\node [right]  at(1,0.2) {$i$};
	    \end{tikzpicture}
	    }$. Then we have $w(\alpha_3') = w(\alpha_{3}'') = w(\alpha_3)-1$ and $e(\alpha_{3}') = e(\alpha_3'') =e(\alpha_3)$. Thus 
	    \begin{align*}
	    	\varphi_{\epsilon}\big(  q_v^{-\frac{1}{n}}(\delta_{j<i}(q_v-q_v^{-1})\alpha_3'+q_{v}^{\delta_{i,j}}\alpha_3'')  \big)
	    	=&\epsilon^2\epsilon^{2(w(\alpha_3)-1)+\frac{e(\alpha_3)}{2}} q_u^{-\frac{1}{n}}(\delta_{j<i}(q_u-q_u^{-1})\alpha_3'+q_{u}^{\delta_{i,j}}\alpha_3'')\\
	    	=&\epsilon^{2w(\alpha_3)+\frac{e(\alpha_3)}{2}}\alpha_3 = \varphi_{\epsilon}(\alpha_3).
	    \end{align*}
	 
	In the following of the proof, we will still use $\alpha_1,\alpha_{1,\sigma},\alpha_{2},\alpha_2',\alpha_{3},\alpha_3',\alpha_3''$ to denote the corresponding negatively ordered stated $n$-web diagrams as in  Case 1.
	
	\text{\bf Case 2}: $k$ is odd, $m$ is even. Suppose $m = 2l$. From direct calculations, we have $\epsilon^n = -1$,  $c_{i,u} = -\epsilon^{-1}c_{i,v}$, and $a_u = (-\epsilon)^{kl}a_v$.
	
	Relation \eqref{wzh.five}: We have 
	\begin{align*}
		\varphi_{\epsilon}(a_v\sum_{\sigma\in \fS}(-q_v)^{\ell(\sigma)}\alpha_{1,\sigma})& = (-\epsilon^{-1})^{kl}
		a_u\sum_{\sigma\in \fS}(-q_u)^{\ell(\sigma)} (-\epsilon)^{2w(\alpha_1) +\frac{e(\alpha_1)+n}{2}} \alpha_{1,\sigma}\\& = (-\epsilon)^{2w(\alpha_1) +\frac{e(\alpha_1)}{2}}\alpha_1 = \varphi_{\epsilon}(\alpha_1).
	\end{align*}
	
	Relation \eqref{wzh.six}: We have 
	\begin{align*}
		\varphi_{\epsilon}(
		\raisebox{-.20in}{
			\begin{tikzpicture}
				\tikzset{->-/.style=
					{decoration={markings,mark=at position #1 with
							{\arrow{latex}}},postaction={decorate}}}
				\filldraw[draw=white,fill=gray!20] (-0.7,-0.7) rectangle (0,0.7);
				\draw [line width =1.5pt,decoration={markings, mark=at position 1 with {\arrow{>}}},postaction={decorate}](0,0.7)--(0,-0.7);
				\draw [color = black, line width =1pt] (0 ,0.3) arc (90:270:0.5 and 0.3);
				\node [right]  at(0,0.3) {$i$};
				\node [right] at(0,-0.3){$j$};
				\filldraw[fill=white,line width =0.8pt] (-0.5 ,0) circle (0.07);
		\end{tikzpicture}}) =-\epsilon 
		\raisebox{-.20in}{
			\begin{tikzpicture}
				\tikzset{->-/.style=
					{decoration={markings,mark=at position #1 with
							{\arrow{latex}}},postaction={decorate}}}
				\filldraw[draw=white,fill=gray!20] (-0.7,-0.7) rectangle (0,0.7);
				\draw [line width =1.5pt,decoration={markings, mark=at position 1 with {\arrow{>}}},postaction={decorate}](0,0.7)--(0,-0.7);
				\draw [color = black, line width =1pt] (0 ,0.3) arc (90:270:0.5 and 0.3);
				\node [right]  at(0,0.3) {$i$};
				\node [right] at(0,-0.3){$j$};
				\filldraw[fill=white,line width =0.8pt] (-0.5 ,0) circle (0.07);
		\end{tikzpicture}}   =  \delta_{\bar j,i }\, (-\epsilon)\,  c_{i,u}\ \raisebox{-.20in}{
			\begin{tikzpicture}
				\tikzset{->-/.style=
					{decoration={markings,mark=at position #1 with
							{\arrow{latex}}},postaction={decorate}}}
				\filldraw[draw=white,fill=gray!20] (-0.7,-0.7) rectangle (0,0.7);
				\draw [line width =1.5pt](0,0.7)--(0,-0.7);
		\end{tikzpicture}}=\delta_{\bar j,i }\, c_{i,v}\ \raisebox{-.20in}{
			\begin{tikzpicture}
				\tikzset{->-/.style=
					{decoration={markings,mark=at position #1 with
							{\arrow{latex}}},postaction={decorate}}}
				\filldraw[draw=white,fill=gray!20] (-0.7,-0.7) rectangle (0,0.7);
				\draw [line width =1.5pt](0,0.7)--(0,-0.7);
		\end{tikzpicture}}\,.
	\end{align*}
	
	Relation \eqref{wzh.seven}: We have \begin{align*}
		\varphi_{\epsilon}(\sum_{1\leq i\leq n}c_{\bar{i},v}^{-1}\alpha_2')
		=\sum_{1\leq i\leq n}-\epsilon^{-1}c_{\bar{i},u}^{-1}(-\epsilon)^{2w(\alpha_2)+\frac{e(\alpha_2)+2}{2}}\alpha_2' = (-\epsilon)^{2w(\alpha_2)+\frac{e(\alpha_2)}{2}}\alpha_2 = \varphi_{\epsilon}(\alpha_2).
	\end{align*}
	
	Relation \eqref{wzh.eight}: We have \begin{align*}
		\varphi_{\epsilon}\big(  q_v^{-\frac{1}{n}}(\delta_{j<i}(q_v-q_v^{-1})\alpha_3'+q_{v}^{\delta_{i,j}}\alpha_3'')  \big)
		=&\epsilon^2 (-\epsilon)^{2(w(\alpha_3)-1)+\frac{e(\alpha_3)}{2}} q_u^{-\frac{1}{n}}(\delta_{j<i}(q_u-q_u^{-1})\alpha_3'+q_{u}^{\delta_{i,j}}\alpha_3'')\\
		=&(-\epsilon)^{2w(\alpha_3)+\frac{e(\alpha_3)}{2}}\alpha_3 = \varphi_{\epsilon}(\alpha_3).
	\end{align*}
	
	\text{\bf Case 3}: $k$ is odd, $m$ is odd, and $\frac{k+m}{2}$ is odd. 
	 From direct calculations, we have $\epsilon^n = -1$, $c_{i,u} = -\epsilon^{-1}c_{i,v}$, and $a_u = a_v$. The proofs for verifying relations \eqref{wzh.six}–\eqref{wzh.eight} are similar to the proof for Case 2.
	 
	 Relation \eqref{wzh.five}: We have $  w(\alpha_{1,\sigma})=w(\alpha_1)$, and $t(\alpha_{1,\sigma}) = t(\alpha_1)$ or $t(\alpha_1)+n$. We always have $(-\epsilon)^{t(\alpha_{1,\sigma})} = (-\epsilon)^{t(\alpha_{1})}$ because $(-\epsilon)^n = 1$. Then it is a trivial check that $\varphi_{\epsilon}$ preserves relation \eqref{wzh.five}.
	
		\text{\bf Case 4}: $k$ is odd, $m$ is odd, and $\frac{k+m}{2}$ is even. 
		 From direct calculations, we have $\epsilon^n = -1$, $c_{i,u} = -\epsilon^{-1}c_{i,v}$, and $a_u = - a_v$. The proof  is similar with the proof for Case 3.  

\end{proof}

The following theorem is analogous to Theorem \ref{stated1}. We establish an $R$-linear isomorphism from $S_n(\Sigma, v)$ to $S_n(\Sigma, \epsilon v)$ under the conditions $\epsilon^m = 1$ and $m \mid n$. Similar to the $R$-linear isomorphism presented in Theorem \ref{stated1}, the corresponding isomorphism in this theorem also defines an algebra isomorphism from $S_n(\Sigma, v)_m$ to $S_n(\Sigma, \epsilon v)_m$.



\begin{theorem}\label{stated2}
	Let $\Sigma$ be a pb surface, let $m,n$ be two positive integers with $m\mid n$, and let $\epsilon\in R$ with $\epsilon^{m} = 1$. Then there exists an $R$-linear isomorphism $\varphi_{\epsilon}:S_n(\Sigma,v)\rightarrow S_n(\Sigma,\epsilon v)$, defined by
	\begin{equation*}
		\varphi_{\epsilon}(\alpha)=
		\begin{cases}
			\epsilon^{2w(\alpha)+\frac{e(\alpha)}{2}} \alpha& n \text{ is even} \\
			\epsilon^{2w(\alpha)+t(\alpha)} \alpha& n \text{ is odd,}
		\end{cases}
	\end{equation*}
	where $\alpha\in S_n(\Sigma,v)$ is a negatively ordered stated $n$-web diagram.  In particular $\varphi_{\epsilon}(\alpha) = \epsilon^{2(w(\alpha))}$ if $\alpha$ is a negatively ordered stated $n$-web diagram without endpoints.
\end{theorem}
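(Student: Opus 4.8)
The plan is to follow the strategy of the proof of Theorem \ref{stated1}, with the simplification that $\epsilon^m=1$ together with $m\mid n$ gives $\epsilon^n=1$ outright; this removes the dependence on the parities of the factors $k,m$ that produced four cases there, and in particular removes any need for an auxiliary sign depending on $p(\alpha)$. Setting $u=\epsilon v$, I would first record the structure constants. From $\epsilon^{2n}=1$ we get $q_u=u^{2n}=q_v$ and $q_u^{1/n}=u^2=\epsilon^2 q_v^{1/n}$, while $c_{i,u}=(-q_u)^{n-i}u^{n-1}=\epsilon^{n-1}c_{i,v}=\epsilon^{-1}c_{i,v}$. A short computation with the exponent of $a_v$, using $\epsilon^{n^3}=1$, gives $a_u=\epsilon^{n(n+1)/2}a_v$; since $\epsilon^n=1$ this equals $\epsilon^{n/2}a_v$ when $n$ is even and $a_v$ when $n$ is odd. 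This dichotomy is exactly what dictates the two branches in the statement.

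Well-definedness of $\varphi_\epsilon$ on isotopy classes of negatively ordered stated $n$-web diagrams is immediate, because $e(\alpha)$ and $t(\alpha)$ are preserved by the five moves of Figure \ref{fg} and $w(\alpha)$ is preserved modulo $n$, so $\epsilon^{2w(\alpha)}$, $\epsilon^{e(\alpha)/2}$ and $\epsilon^{t(\alpha)}$ all descend (here $\epsilon^{2n}=1$ is used for the first). For the interior relations \eqref{w.cross}--\eqref{wzh.four}, which create no boundary endpoints and hence change neither $e(\alpha)$ nor $t(\alpha)$, I would invoke the computation in the proof of Theorem \ref{skein} with $\epsilon^2$ in place of $\epsilon$: this is legitimate since $u^2=\epsilon^2 v^2$ and $(\epsilon^2)^m=1$, and the common factor $\epsilon^{2w(\alpha)}$ is precisely the one checked there, the verification of \eqref{wzh.four} relying on $(\epsilon^2)^n=1$ to absorb the $\ell(\sigma)$ crossings of the positive braid $\sigma_+$. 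The factor $\epsilon^{e(\alpha)/2}$ (resp. $\epsilon^{t(\alpha)}$) is constant across each of these relations and plays no role.

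It then remains to check the boundary relations \eqref{wzh.five}--\eqref{wzh.eight}, splitting into the two cases. For $n$ even the argument is identical to Case 1 of Theorem \ref{stated1}: passing to the state sum in \eqref{wzh.five} raises $e$ by $n$, and the resulting $\epsilon^{n/2}$ is cancelled by $a_u=\epsilon^{n/2}a_v$; relations \eqref{wzh.six} and \eqref{wzh.seven} change $e$ by $2$, producing $\epsilon^{\pm1}$ that is cancelled by $c_{i,u}=\epsilon^{-1}c_{i,v}$; and in \eqref{wzh.eight} the single crossing gives $w(\alpha_3)=w(\alpha_3')+1$, so the factor $\epsilon^{-2}$ is cancelled by $q_v^{-1/n}=\epsilon^2 q_u^{-1/n}$. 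For $n$ odd one replaces $e(\alpha)/2$ by $t(\alpha)$: each of \eqref{wzh.six} and \eqref{wzh.seven} creates exactly one endpoint pointing towards the boundary, so $t$ changes by $1$ and again matches $c_{i,u}=\epsilon^{-1}c_{i,v}$, while in \eqref{wzh.five} the $n$ new endpoints change $t$ by $0$ or $n$, which is harmless since $\epsilon^n=1$ and $a_u=a_v$. Finally $\varphi_{\epsilon^{-1}}$ is a two-sided inverse, so $\varphi_\epsilon$ is an $R$-linear isomorphism.

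The only genuinely nontrivial point is the choice of exponent, that is, the parity obstruction behind the two cases. Because every $n$-web diagram satisfies $e(\alpha)\equiv n\,p(\alpha)\pmod 2$, the quantity $e(\alpha)/2$ is an integer for all diagrams precisely when $n$ is even; for odd $n$ one must instead use the genuinely integral $t(\alpha)$, and the substance of the odd case is the observation that, modulo $n$, $t$ is insensitive to the state-summation appearing in \eqref{wzh.five}. The ``in particular'' assertion is then immediate, since $e(\alpha)=t(\alpha)=0$ for a web without endpoints and both branches reduce to $\epsilon^{2w(\alpha)}\alpha$.
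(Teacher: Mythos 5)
Your proposal is correct and follows essentially the same route as the paper: the paper's proof of Theorem \ref{stated2} likewise sets $u=\epsilon v$, records $\epsilon^n=1$, $c_{i,u}=\epsilon^{-1}c_{i,v}$, and $a_u=\epsilon^{n/2}a_v$ ($n$ even) or $a_u=a_v$ ($n$ odd), and then defers to the technique of Theorem \ref{stated1} (itself resting on Theorem \ref{skein} for relations \eqref{w.cross}--\eqref{wzh.four}) exactly as you do, with $\varphi_{\epsilon^{-1}}$ as inverse. Your verifications of \eqref{wzh.five}--\eqref{wzh.eight}, and the parity observation $e(\alpha)\equiv n\,p(\alpha)\pmod 2$ explaining the two branches, simply fill in details the paper leaves implicit, and they check out.
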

\begin{proof}
	Set $u=\epsilon v$.
	From direct calculations, we have $\epsilon^n = 1$, $q_u^{\frac{1}{2}} = q_v^{\frac{1}{2}}$,  and $c_{i,u} = \epsilon^{-1}c_{i,v}$ for $1\leq i\leq n$. Note that $a_u = \epsilon^{\frac{n}{2}}a_v$ when $n$ is even and $a_u = a_v$ when $n$ is odd. Then, using the same technique as in Theorem \ref{stated1}, we can prove that $\varphi_{\epsilon}$ is a well-defined linear map. Trivially $\varphi_{\epsilon^{-1}}:S_n(\Sigma,\epsilon v)\rightarrow S_n(\Sigma,v)$ is the inverse of $\varphi_{\epsilon}$.

\end{proof}

The following Theorem combines Theorems \ref{stated1} and \ref{stated2}.

\begin{theorem}\label{c_iso}
	Let $\Sigma$ be a pb surface, let $k,m,n$ be three positive integers with $n = km$, and let $\epsilon\in R$ with $\epsilon^{2m} = 1$. Then there exists an $R$-linear isomorphism $\varphi_{\epsilon}:S_n(\Sigma,v)\rightarrow S_n(\Sigma,\epsilon v)$, defined by
	\begin{equation*}
		\varphi_{\epsilon}(\alpha)=
		\begin{cases}
			\epsilon^{2w(\alpha)+\frac{e(\alpha)}{2}} \alpha& \epsilon^m =-1,\text{ } k \text{ is even} \\
			(-\epsilon)^{2w(\alpha)+\frac{e(\alpha)}{2}} \alpha&\epsilon^m =-1,\text{ } k \text{ is odd, and }m\text{ is even}\\
			(-\epsilon)^{2w(\alpha)+t(\alpha)} \alpha&\epsilon^m =-1, \text{ }k \text{ is odd, }m\text{ is odd, and }\frac{k+m}{2} \text{ is odd}\\
			(-1)^{p(\alpha)}(-\epsilon)^{2w(\alpha)+t(\alpha)} \alpha&\epsilon^m =-1, \text{ }k \text{ is odd, }m\text{ is odd, and }\frac{k+m}{2} \text{ is even}\\
			\epsilon^{2w(\alpha)+\frac{e(\alpha)}{2}} \alpha&\epsilon^m =1,\text{ } n \text{ is even} \\
			\epsilon^{2w(\alpha)+t(\alpha)} \alpha&\epsilon^m =1, \text{ }n \text{ is odd,}
		\end{cases}
	\end{equation*}
	where $\alpha\in S_n(\Sigma,v)$ is a negatively ordered stated $n$-web diagram.  In particular $\varphi_{\epsilon}(\alpha) = \epsilon^{2(w(\alpha))}$ if $\alpha$ is a negatively ordered stated $n$-web diagram without endpoints.
\end{theorem}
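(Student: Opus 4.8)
The plan is to reduce the statement to the two cases already established in Theorems \ref{stated1} and \ref{stated2}, exploiting the hypothesis that $R$ is a commutative domain. Since $\epsilon^{2m} = 1$, the element $\epsilon^m$ satisfies $(\epsilon^m - 1)(\epsilon^m + 1) = (\epsilon^m)^2 - 1 = 0$, and because $R$ has no zero divisors this forces $\epsilon^m = 1$ or $\epsilon^m = -1$. Thus every admissible $\epsilon$ falls into exactly one of the two regimes treated previously (or, in the degenerate situation $\mathrm{char}(R) = 2$ where $1 = -1$, into both).

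First I would dispose of the case $\epsilon^m = -1$. Here the hypotheses of Theorem \ref{stated1} hold verbatim, with $n = km$ and $\epsilon^m = -1$, so Theorem \ref{stated1} already supplies an $R$-linear isomorphism $\varphi_{\epsilon} : S_n(\Sigma, v) \to S_n(\Sigma, \epsilon v)$. A direct comparison shows that the four subcases of Theorem \ref{stated1}, indexed by the parity of $k$, the parity of $m$, and (when both are odd) the parity of $\tfrac{k+m}{2}$, reproduce exactly the first four branches of the piecewise formula in the present statement. Next I would treat the case $\epsilon^m = 1$: here the hypotheses of Theorem \ref{stated2} hold, with $m \mid n$ and $\epsilon^m = 1$, so that theorem supplies the isomorphism $\varphi_{\epsilon}$, and its two subcases (according to the parity of $n$) match the last two branches of the present formula. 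Since the two regimes are exhaustive, the piecewise definition assembles into a single well-defined $R$-linear isomorphism.

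The only point requiring genuine attention is the domain hypothesis, which is precisely what excludes $\epsilon^m$ from being a nontrivial square root of unity and so guarantees the dichotomy $\epsilon^m = \pm 1$; everything else is bookkeeping already carried out in the two preceding theorems. The one remaining subtlety, which I expect to be the main (though still mild) obstacle, is the overlap in characteristic $2$: there both regimes apply simultaneously, so one must check that the candidate branches agree. But in characteristic $2$ one has $-\epsilon = \epsilon$ and $(-1)^{p(\alpha)} = 1$, whence for each parity of $n$ the two applicable branches collapse to the same scalar multiple of $\alpha$, leaving no ambiguity. Beyond this consistency verification I do not anticipate any real difficulty, since the hard structural work—checking the defining relations \eqref{w.cross}–\eqref{wzh.eight} together with the moves of Figure \ref{fg}—was already completed in Theorems \ref{stated1} and \ref{stated2}.
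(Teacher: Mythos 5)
Your proposal is correct and matches the paper's proof, which simply invokes Theorems \ref{stated1} and \ref{stated2}; the dichotomy $\epsilon^m = \pm 1$ forced by the domain hypothesis is exactly the (implicit) reduction the paper relies on. Your additional check that the two branches agree in characteristic $2$ is a careful touch the paper leaves unstated, but it does not change the argument.
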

\begin{proof}
	Theorems \ref{stated1}, \ref{stated2}.
\end{proof}

\begin{corollary}
Let $\Sigma$ be a pb surface, let $m,n$ be two positive integers with $m\mid n$, and let $\epsilon\in R$ with $\epsilon^{2m} = 1$.
 Then the  $R$-linear isomorphism $\varphi_{\epsilon}:S_n(\Sigma,v)\rightarrow S_n(\Sigma, \epsilon v)$ restricts to an algebra isomorphism $\varphi_{\epsilon}|_{S_n(\Sigma,v)_m}:S_n(\Sigma,v)_m\rightarrow S_n(\Sigma,\epsilon v)_m$.
\end{corollary}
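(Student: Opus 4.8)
The plan is to mirror the proof of Corollary \ref{skein_cor}, now feeding in the refined scalar factors supplied by Theorem \ref{c_iso}. First I would dispose of the easy half. Because $\varphi_{\epsilon}$ multiplies each negatively ordered stated $n$-web diagram by an invertible scalar, and because the membership condition \eqref{condition} is phrased purely in terms of the diagrammatic quantities $w(\alpha,\alpha')$ and $w(\alpha',\alpha)$, which depend only on the underlying diagrams and not on the quantum parameter, the spanning diagrams of $S_n(\Sigma,v)_m$ and of $S_n(\Sigma,\epsilon v)_m$ are literally the same set. Hence $\varphi_{\epsilon}$ restricts to a linear isomorphism between the two subalgebras (both are genuine subalgebras since $m\mid n$), and the only real content of the corollary is that this restriction is multiplicative.

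Next I would reduce multiplicativity to checking $\varphi_{\epsilon}(\alpha\beta)=\varphi_{\epsilon}(\alpha)\varphi_{\epsilon}(\beta)$ for generating diagrams $\alpha,\beta\in S_n(\Sigma,v)_m$, writing $\gamma$ for the negatively ordered diagram obtained by stacking $\alpha$ above $\beta$. Two inputs are needed. First, the auxiliary exponents appearing in the formula of Theorem \ref{c_iso} are additive under stacking: the endpoints, the boundary-pointing endpoints, and the sinks and sources of $\gamma$ are the disjoint unions of those of $\alpha$ and $\beta$, so that $e(\gamma)=e(\alpha)+e(\beta)$, $t(\gamma)=t(\alpha)+t(\beta)$, and $p(\gamma)=p(\alpha)+p(\beta)$. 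Second, I invoke the writhe identity $w(\alpha,\beta)=(w(\gamma)-w(\alpha)-w(\beta))\ (\mathrm{mod}\ n)$ already used in Corollary \ref{skein_cor}. Membership in $S_n(\Sigma,v)_m$ forces $m\mid w(\alpha,\beta)$, and together with $m\mid n$ this yields $m\mid(w(\gamma)-w(\alpha)-w(\beta))$, i.e. $2w(\gamma)\equiv 2w(\alpha)+2w(\beta)\ (\mathrm{mod}\ 2m)$.

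Writing $\varphi_{\epsilon}(\delta)=c(\delta)\,\delta$ for the scalar factor, I then expect $c(\gamma)=c(\alpha)c(\beta)$ to hold in every one of the six cases of Theorem \ref{c_iso}: the $e/2$-, $t$-, and $(-1)^{p}$-contributions multiply by the additivity just recorded, while the $\epsilon^{2w}$- or $(-\epsilon)^{2w}$-contribution multiplies because $\epsilon^{2m}=1$, and hence also $(-\epsilon)^{2m}=1$, annihilates the discrepancy $2w(\gamma)-2w(\alpha)-2w(\beta)\in 2m\BZ$. This gives $\varphi_{\epsilon}(\gamma)=c(\gamma)\gamma=c(\alpha)c(\beta)\gamma=\varphi_{\epsilon}(\alpha)\varphi_{\epsilon}(\beta)$, as desired. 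The bulk of the work is the uniform bookkeeping across the six cases; the step I would watch most carefully is confirming in each case that the base of the writhe exponent is genuinely a $2m$-th root of unity (so that the $2m\BZ$ discrepancy truly vanishes) and that the surviving exponents are exactly the additive quantities $e/2$, $t$, and $p$. Both facts are read off directly from the explicit formula in Theorem \ref{c_iso}, so no serious obstacle is expected beyond this case-by-case verification.
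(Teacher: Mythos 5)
Your proposal is correct and follows exactly the route the paper intends: the paper's proof of this corollary is literally ``similar to Corollary \ref{skein_cor},'' i.e.\ the restriction is a linear isomorphism because condition \eqref{condition} is parameter-independent, and multiplicativity follows from the identity $w(\alpha,\beta)=(w(\alpha\beta)-w(\alpha)-w(\beta))\ (\mathrm{mod}\ n)$ together with $m\mid w(\alpha,\beta)$, $m\mid n$, and $\epsilon^{2m}=(-\epsilon)^{2m}=1$. Your additional bookkeeping --- additivity of $e$, $t$, $p$ under stacking so that the auxiliary exponents in Theorem \ref{c_iso} multiply case by case --- is precisely the verification the paper leaves implicit, so there is nothing to correct.
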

\begin{proof}
	The proof is similar with Corollary \ref{skein_cor}. 
\end{proof}


%
%
%
%
%
%
%
%

\section{Spin structure and the $SL_n$-skein space}

The $SU_n$-skein module of an oriented 3-manifold is defined in \cite{sikora2005skein}. The $SU_n$-skein module of the 3-manifold $M$ is built of based $n$-webs which are defined as the $n$-webs in $M$, except that the half-edges incident  to any of their $n$-valent vertices are linearly ordered. The $SU_n$-skein module, denoted as $S_n^{b}(M,v^2)$, is the quotient of the $R$-module freely generated by the set of isotopy classes of based $n$-webs subject to relations \eqref{w.cross}-\eqref{w.unknot} and \eqref{wzh.nfour}. L{\^e} and Sikora proved the equivalence between the $SU_n$-skein theory and the $SL_n$-skein theory \cite{le2021stated}. To simplify the proof, we will work with the $SU_n$-skein theory in this section (all the discussions can be pulled back to the $SL_n$-skein theory using the equivalence built by L{\^e} and Sikora). 

For a 3-manifold $M$,  Barrett established an isomorphism from $S_2^{b}(M,v^2)$ to $S_2^{b}(M,-v^2)$ using a spin structure of $M$ \cite{barrett1999skein}. In this section, we will generalize Barrett's work to all $n$.

\beq\label{wzh.nfour}
\raisebox{-.30in}{
	\begin{tikzpicture}
		\tikzset{->-/.style=
			{decoration={markings,mark=at position #1 with
					{\arrow{latex}}},postaction={decorate}}}
		\filldraw[draw=white,fill=gray!20] (-1,-0.7) rectangle (1.2,1.3);
		\draw [line width =1pt,decoration={markings, mark=at position 0.5 with {\arrow{>}}},postaction={decorate}](-1,1)--(0,0);
		\draw [line width =1pt,decoration={markings, mark=at position 0.5 with {\arrow{>}}},postaction={decorate}](-1,0)--(0,0);
		\draw [line width =1pt,decoration={markings, mark=at position 0.5 with {\arrow{>}}},postaction={decorate}](-1,-0.4)--(0,0);
		\draw [line width =1pt,decoration={markings, mark=at position 0.5 with {\arrow{<}}},postaction={decorate}](1.2,1)  --(0.2,0);
		\draw [line width =1pt,decoration={markings, mark=at position 0.5 with {\arrow{<}}},postaction={decorate}](1.2,0)  --(0.2,0);
		\draw [line width =1pt,decoration={markings, mark=at position 0.5 with {\arrow{<}}},postaction={decorate}](1.2,-0.4)--(0.2,0);
		\node  at(-0.8,0.5) {$\vdots$};
		\node  at(1,0.5) {$\vdots$};
\end{tikzpicture}}=q_v^{n(n-1)}\cdot \sum_{\sigma\in \fS}
(-q_v^{\frac{1-n}n})^{\ell(\sigma)} \raisebox{-.30in}{
	\begin{tikzpicture}
		\tikzset{->-/.style=
			{decoration={markings,mark=at position #1 with
					{\arrow{latex}}},postaction={decorate}}}
		\filldraw[draw=white,fill=gray!20] (-1,-0.7) rectangle (1.2,1.3);
		\draw [line width =1pt,decoration={markings, mark=at position 0.5 with {\arrow{>}}},postaction={decorate}](-1,1)--(0,0);
		\draw [line width =1pt,decoration={markings, mark=at position 0.5 with {\arrow{>}}},postaction={decorate}](-1,0)--(0,0);
		\draw [line width =1pt,decoration={markings, mark=at position 0.5 with {\arrow{>}}},postaction={decorate}](-1,-0.4)--(0,0);
		\draw [line width =1pt,decoration={markings, mark=at position 0.5 with {\arrow{<}}},postaction={decorate}](1.2,1)  --(0.2,0);
		\draw [line width =1pt,decoration={markings, mark=at position 0.5 with {\arrow{<}}},postaction={decorate}](1.2,0)  --(0.2,0);
		\draw [line width =1pt,decoration={markings, mark=at position 0.5 with {\arrow{<}}},postaction={decorate}](1.2,-0.4)--(0.2,0);
		\node  at(-0.8,0.5) {$\vdots$};
		\node  at(1,0.5) {$\vdots$};
		\filldraw[draw=black,fill=gray!20,line width =1pt]  (0.1,0.3) ellipse (0.4 and 0.7);
		\node  at(0.1,0.3){$\sigma_{+}$};
\end{tikzpicture}}
\eeq

Let $\alpha$ be a based $n$-web in $M$, and $H$ be an isotopy of $\alpha$. We use $\alpha^H$ to denote the based $n$-web in $M$ obtained from $\alpha$ by doing isotopy $H$. We use $K(\alpha)$ to denote the number of knot components of $\alpha$.

\begin{rem}\label{perm}

	Suppose that $n$ is a positive even number. 
	Consider a based $n$-web $\alpha$ in a 3-manifold $M$ with $k$ sinks and $k$ sources ($k > 0$), where the sinks are labeled from $1$ to $k$ and the sources are also labeled from $1$ to $k$.
 Through an isotopy $H$, we deform $\alpha$ such that, for each $1 \leq i \leq k$, the sink and source labeled by $i$ lie in a small open cube $U_i$. The intersection $\alpha^H \cap U_i$ resembles the illustration on the left side of relation \eqref{wzh.nfour}.

	For $\sigma_{1},\sigma_2,\dots,\sigma_k \in \fS$, denote $\alpha_{\sigma_1,\cdots,\sigma_k}^H$ as the based $n$-web obtained from $\alpha^H$. This web coincides with $\alpha^H$ outside $\cup_{1\leq i\leq k}U_i$, and within each $U_i$, the intersection $\alpha_{\sigma_1,\cdots,\sigma_k}^H \cap U_i$ follows the pattern on the right side of relation \eqref{wzh.nfour}, with $\sigma_+ = (\sigma_i)_+$. In $S_n^{b}(M,v^2)$, relation \eqref{wzh.nfour} implies 
$$\alpha = q_v^{n(n-1)k}\sum_{\sigma_1,\dots,\sigma_k\in \fS}(-q_v^{\frac{1-n}{n}})^{\ell(\sigma_1)+\cdots+ \ell(\sigma_k)} \alpha_{\sigma_1,\cdots,\sigma_k}^H.$$

For each $1\leq i\leq k$, we know that there is a linear order on all half edges incident to the  $i$-th sink (and source) of $\alpha$. Then we color the $j$-th half edge incident to the $i$-sink (resp. the source)  with $(i-1)n+j$ (resp.  $(i-1)n+n+1-j$). Then there exists $\tau_{\alpha}\in \mathfrak{S}_{kn}$ such that, for each $1\leq i\leq kn$, the starting half edge colored by $i$ is connected to the targeting half edge
colored by $\tau_{\alpha}(i)$.

Although $\tau_{\alpha}$ is dependent on how we label the sinks and sources of $\alpha$, the sign  $(-1)^{\ell(\tau_{\alpha})}$ is irrelevant of the labelings of sinks and sources because $n$ is even.  It is easy to show 
$$(-1)^{K(\alpha_{\sigma_1,\cdots,\sigma_k}^H)} = (-1)^{K(\alpha)+\ell(\tau_{\alpha})+\ell(\sigma_1)+\cdots+\ell(\sigma_k)}.$$
\end{rem}

Suppose $n$ is even and $s$ is a spin structure of $M$. For any based $n$-web $\alpha$ in $M$, L{\^e} and Sikora defined $s(\alpha) \in \mathbb{Z}_2$ on page 76 of \cite{le2021stated}, with the properties that $s(\text{trivial knot}) = 1$ and $s(\alpha') = s(\alpha) + 1$ if $\alpha'$ differs from $\alpha$ by a single positive or negative kink.

The following Theorem generalizes Barrett's work \cite{barrett1999skein} to all $n$.

\begin{theorem}
Let $M$ be a 3-manifold, and let $s$ be a spin structure of $M$. For each positive integer $n$, there exists a linear isomorphism $\Phi_{n}:S_n^{b}(M,v^2)\rightarrow S_n^{b}(M,-v^2)$, defined by  
\begin{equation*}
	\Phi_n(\alpha)=
	\begin{cases}
		\alpha& n \text{ is odd} \\
		(-1)^{s(\alpha)+K(\alpha)+\ell(\tau_{\alpha})}& n \text{ is even,}
	\end{cases}
\end{equation*}
where $\alpha$ is a based $n$-web in $M$, the permutation $\tau_{\alpha} = (1)$ if $\alpha$ has no sinks or sources and $\tau_{\alpha}$ is the permutation  defined as in Remark \ref{perm} if $\alpha$ has sinks and sources. In particular $\Phi_n$ is an algebra isomorphism when $M$ is the thickening of an oriented surface.
\end{theorem}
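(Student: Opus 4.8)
The plan is to check that the stated formula descends to a well-defined $R$-linear map by verifying that it carries each defining relation of $S_n^b(M,v^2)$ to a relation of $S_n^b(M,-v^2)$, and then to exhibit an inverse. Write $u$ for the parameter of the target, so $u^2=-v^2$ and $q_u=u^{2n}=(-1)^nq_v$. First I would record the resulting changes of the structure constants. When $n$ is odd, $q_u=-q_v$, $q_u^{1/n}=-q_v^{1/n}$, $t_u=t_v$, $[n]_u=[n]_v$, and relation \eqref{wzh.nfour} is unchanged; each of \eqref{w.cross}--\eqref{wzh.nfour} for $u$ is then a scalar ($\pm1$) multiple of the corresponding relation for $v$, so the two relation submodules coincide and $\Phi_n=\mathrm{id}$ is well defined and bijective. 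When $n$ is even, $q_u=q_v$, $q_u^{1/n}=-q_v^{1/n}$, $t_u=-t_v$, $[n]_u=[n]_v$, while in \eqref{wzh.nfour} the factor $(-q_u^{(1-n)/n})^{\ell(\sigma)}$ equals $(-1)^{\ell(\sigma)}(-q_v^{(1-n)/n})^{\ell(\sigma)}$, so the $u$-relation acquires an extra sign $(-1)^{\ell(\sigma)}$ in each summand. The rest of the argument is the even case.

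Write $F(\alpha)=s(\alpha)+K(\alpha)+\ell(\tau_\alpha)\in\BZ_2$, so that $\Phi_n(\alpha)=(-1)^{F(\alpha)}\alpha$. I would first note $F$ is an isotopy invariant: $K$ and the connectivity permutation are, $(-1)^{\ell(\tau_\alpha)}$ is independent of the labelings by Remark \ref{perm}, and $s$ is an isotopy invariant by its definition in \cite{le2021stated}. The key combinatorial input I would isolate is that $K(\alpha)+\ell(\tau_\alpha)$ equals, modulo $2$, the number of circle components of the web obtained by resolving every sink--source pair of $\alpha$ with the trivial braid: this is exactly the identity $(-1)^{K(\alpha_{\sigma_1,\cdots,\sigma_k}^H)}=(-1)^{K(\alpha)+\ell(\tau_\alpha)+\sum_i\ell(\sigma_i)}$ of Remark \ref{perm} specialized to $\sigma_i=\mathrm{id}$. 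This makes $g(\alpha):=K(\alpha)+\ell(\tau_\alpha)$ a robust invariant whose behavior under local moves I can read off from the component count of the all-trivial resolution. The second input I would record, from the definition of $s$ and Barrett's use of the spin structure, is that $s$ is additive under disjoint union and is insensitive to a crossing change, to an oriented smoothing, and to resolving a sink--source pair into a positive braid, so that the only local move changing $s$ is a kink, which changes it by $1$.

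With these in hand the four relations are checked as follows. For \eqref{w.unknot}, adjoining a trivial circle adds $1$ to both $s$ and $K$ and fixes $\ell(\tau_\alpha)$, so $F$ changes by $2$ and the two sides carry the same sign, matching $(-1)^{n-1}[n]_u=(-1)^{n-1}[n]_v$. For \eqref{w.twist}, a kink changes $s$ by $1$ and fixes $g$, so $\Phi_n$ flips the sign, exactly compensating $t_u=-t_v$. For \eqref{w.cross} the two crossings differ by a crossing change, under which $s$ and $g$ are both unchanged, whereas the oriented smoothing fixes $s$ and flips $g$ (the component count of the trivial resolution changes by one, since smoothing the crossing commutes with resolving the vertices); thus $F$ is constant on the two crossing terms and differs by $1$ on the smoothing term, and a short sign computation using $q_u^{1/n}=-q_v^{1/n}$ and $q_u-q_u^{-1}=q_v-q_v^{-1}$ shows the $u$-relation follows from the $v$-relation. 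For \eqref{wzh.nfour} I would apply the relation $k$ times to reduce $\alpha$ to a sum of pure-link resolutions $\alpha_{\vec\sigma}$; on these $\ell(\tau)=0$, Remark \ref{perm} gives $(-1)^{K(\alpha_{\vec\sigma})}=(-1)^{F(\alpha)+\sum_i\ell(\sigma_i)}$, and $s(\alpha_{\vec\sigma})=s(\alpha)$ by the resolution-invariance of $s$, so the extra factor $(-1)^{\sum_i\ell(\sigma_i)}$ is precisely the discrepancy between the $v$- and $u$-forms of \eqref{wzh.nfour}, confirming $\Phi_n(\alpha)=(-1)^{F(\alpha)}\alpha$ is consistent with both.

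Finally, since $F(\alpha)$ is topological and independent of the quantum parameter, the same formula defines a map $S_n^b(M,-v^2)\to S_n^b(M,v^2)$ by the symmetric argument, and the two compose to the identity because $(-1)^{2F}=1$; hence $\Phi_n$ is an isomorphism. For the algebra statement, when $M=\Sigma\times[-1,1]$ the product of $\alpha$ and $\beta$ is their disjoint union at different heights, so $K$, $\ell(\tau)$, and $s$ are each additive and $F(\alpha\beta)=F(\alpha)+F(\beta)$, giving $\Phi_n(\alpha\beta)=\Phi_n(\alpha)\Phi_n(\beta)$. The main obstacle is the even-$n$ vertex relation \eqref{wzh.nfour}: it requires both the combinatorial identity of Remark \ref{perm} and the fact that $s$ is unchanged under resolving sink--source pairs into braids, i.e.\ insensitive to the inter-strand crossings so created. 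Establishing this invariance of $s$ from its spin-structure definition, in parallel with the crossing-change and smoothing invariance needed for \eqref{w.cross}, is where the spin structure is essential and is the crux of the generalization of Barrett's argument.
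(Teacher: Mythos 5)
Your proposal follows essentially the same route as the paper: the identity map for odd $n$ (after checking each $u$-relation is a $\pm1$ multiple of the $v$-relation), and for even $n$ the sign $(-1)^{s(\alpha)+K(\alpha)+\ell(\tau_{\alpha})}$ verified against relations \eqref{w.cross}--\eqref{w.unknot} and \eqref{wzh.nfour} by resolving all sink--source pairs via an isotopy and invoking the sign identity of Remark \ref{perm}, exactly as in the paper's equation \eqref{eeeq}. Your repackaging of $K(\alpha)+\ell(\tau_{\alpha})$ as the parity of the component count of the all-trivial resolution, and your explicit check of multiplicativity through additivity of $F$ under stacking (which the paper leaves implicit), are mild streamlinings resting on the same properties of $s$ (invariance under crossing change, smoothing, and vertex resolution; kink changes it by $1$) that the paper likewise asserts without detailed proof.
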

\begin{proof}
When $n$ is odd, relations \eqref{w.cross}-\eqref{w.unknot} and \eqref{wzh.nfour} for $S_n^{b}(M,v^2)$ are the same with the corresponding relations for $S_n^{b}(M,-v^2)$. 

Suppose $n$ is even.
Obviously, the map $\Phi_n$ is well-defined on the set of isotopy classes of based $n$-webs.

 Suppose that the based $n$-web $\alpha$ in $M$ has $k$ sinks and $k$ sources with $k>0$. We do the same procedures to $\alpha$ and follow the same notations as in Remark \ref{perm}. In $S_n^{b}(M,-v^2)$, we have 
\begin{equation}\label{eeeq}
\begin{split}
\Phi_n(\alpha) = &(-1)^{s(\alpha)+K(\alpha)+\ell(\tau_{\alpha})}\alpha\\
=&(-1)^{s(\alpha)+K(\alpha)+\ell(\tau_{\alpha})}
q_v^{n(n-1)k}\sum_{\sigma_1,\dots,\sigma_k\in \fS}(q_v^{\frac{1-n}{n}})^{\ell(\sigma_1)+\cdots+ \ell(\sigma_k)} \alpha_{\sigma_1,\cdots,\sigma_k}^H\\
=&(-1)^{s(\alpha)}
q_v^{n(n-1)k}\sum_{\sigma_1,\dots,\sigma_k\in \fS}(-q_v^{\frac{1-n}{n}})^{\ell(\sigma_1)+\cdots+ \ell(\sigma_k)}(-1)^{K(\alpha)+\ell(\tau_{\alpha})+\ell(\sigma_1)+\cdots+ \ell(\sigma_k)} \alpha_{\sigma_1,\cdots,\sigma_k}^H\\
=&(-1)^{s(\alpha)}
q_v^{n(n-1)k}\sum_{\sigma_1,\dots,\sigma_k\in \fS}(-q_v^{\frac{1-n}{n}})^{\ell(\sigma_1)+\cdots+ \ell(\sigma_k)}(-1)^{K(\alpha_{\sigma_1,\cdots,\sigma_k}^H)} \alpha_{\sigma_1,\cdots,\sigma_k}^H.
\end{split}
\end{equation}

Then we are trying to show $\Phi_n$ preserves relations \eqref{w.cross}-\eqref{w.unknot} and \eqref{wzh.nfour}.

Relation \eqref{w.cross}: We use $U$ to denote the small open cube in relation \eqref{w.cross}. Let $\alpha_1,\alpha_2,\alpha_3$ be three based $n$-webs in $M$ such that they are identical to each other outside $U$ and 
$\alpha_1\cap U=
\raisebox{-.20in}{

\begin{tikzpicture}
\tikzset{->-/.style=

{decoration={markings,mark=at position #1 with

{\arrow{latex}}},postaction={decorate}}}
\filldraw[draw=white,fill=gray!20] (-0,-0.2) rectangle (1, 1.2);
\draw [line width =1pt,decoration={markings, mark=at position 0.5 with {\arrow{>}}},postaction={decorate}](0.6,0.6)--(1,1);
\draw [line width =1pt,decoration={markings, mark=at position 0.5 with {\arrow{>}}},postaction={decorate}](0.6,0.4)--(1,0);
\draw[line width =1pt] (0,0)--(0.4,0.4);
\draw[line width =1pt] (0,1)--(0.4,0.6);
\draw[line width =1pt] (0.4,0.6)--(0.6,0.4);
\end{tikzpicture}},
\alpha_2\cap U =
\raisebox{-.20in}{
\begin{tikzpicture}
\tikzset{->-/.style=

{decoration={markings,mark=at position #1 with

{\arrow{latex}}},postaction={decorate}}}
\filldraw[draw=white,fill=gray!20] (-0,-0.2) rectangle (1, 1.2);
\draw [line width =1pt,decoration={markings, mark=at position 0.5 with {\arrow{>}}},postaction={decorate}](0.6,0.6)--(1,1);
\draw [line width =1pt,decoration={markings, mark=at position 0.5 with {\arrow{>}}},postaction={decorate}](0.6,0.4)--(1,0);
\draw[line width =1pt] (0,0)--(0.4,0.4);
\draw[line width =1pt] (0,1)--(0.4,0.6);
\draw[line width =1pt] (0.6,0.6)--(0.4,0.4);
\end{tikzpicture}
},\alpha_3\cap U=
\raisebox{-.20in}{

\begin{tikzpicture}
\tikzset{->-/.style=

{decoration={markings,mark=at position #1 with

{\arrow{latex}}},postaction={decorate}}}
\filldraw[draw=white,fill=gray!20] (-0,-0.2) rectangle (1, 1.2);
\draw [line width =1pt,decoration={markings, mark=at position 0.5 with {\arrow{>}}},postaction={decorate}](0,0.8)--(1,0.8);
\draw [line width =1pt,decoration={markings, mark=at position 0.5 with {\arrow{>}}},postaction={decorate}](0,0.2)--(1,0.2);
\end{tikzpicture}
}$. It is easy to show $s(\alpha_1) = s(\alpha_2) = s(\alpha_3)$. If  $\alpha_1,\alpha_2,\alpha_3$ contain no sinks or sources, we have $(-1)^{K(\alpha_1)} = (-1)^{K(\alpha_2)} = -(-1)^{K(\alpha_3)}$.
Then, in $S_n^{b}(M,-v^2)$, we have 
\begin{align*}
&\Phi_n(q_v^{\frac{1}{n}}\alpha_1 - q_v^{-\frac{1}{n}}\alpha_2) = 
q_v^{\frac{1}{n}}(-1)^{s(\alpha_1)+K(\alpha_1)}\alpha_1 - q_v^{-\frac{1}{n}}(-1)^{s(\alpha_2)+K(\alpha_2)}\alpha_2\\
=&(-1)^{s(\alpha_3)+K(\alpha_3)}(q_v^{-\frac{1}{n}}\alpha_2-q_v^{\frac{1}{n}}\alpha_1)=(-1)^{s(\alpha_3)+K(\alpha_3)}(q_v - q_v^{-1})\alpha_3 = \Phi_n((q_v - q_v^{-1})\alpha_3).
\end{align*}

If $\alpha_1$, $\alpha_2$, and $\alpha_3$ contain sinks and sources, we label the sinks and sources consistently across $\alpha_1$, $\alpha_2$, and $\alpha_3$. We can use the same isotopy $H$ to pair the sinks and sources for $\alpha_1,\alpha_2,\alpha_3$ such that the support of $H$ is contained in $M\setminus U$. Let $\sigma_1,\cdots,\sigma_k\in \fS$. We have that
  $(\alpha_1)_{\sigma_1,\cdots,\sigma_k}^{H},(\alpha_2)_{\sigma_1,\cdots,\sigma_k}^{H},(\alpha_3)_{\sigma_1,\cdots,\sigma_k}^{H}$ are identical to each other outside $U$, and
$$(\alpha_1)_{\sigma_1,\cdots,\sigma_k}^{H}\cap U =\alpha_1\cap U,\;(\alpha_2)_{\sigma_1,\cdots,\sigma_k}^{H}\cap U =\alpha_2\cap U,\;(\alpha_3)_{\sigma_1,\cdots,\sigma_k}^{H}\cap U =\alpha_3\cap U.$$ Futhermore, 
$(-1)^{K((\alpha_1)_{\sigma_1,\cdots,\sigma_k}^{H})} = (-1)^{K((\alpha_2)_{\sigma_1,\cdots,\sigma_k}^{H})} = - (-1)^{K((\alpha_3)_{\sigma_1,\cdots,\sigma_k}^{H})}$. Note that, in $S_n^{b}(M,-v^2)$, we have 
$$q_v^{-\frac{1}{n}}(\alpha_2)_{\sigma_1,\cdots,\sigma_k}^{H} - q_v^{\frac{1}{n}} (\alpha_1)_{\sigma_1,\cdots,\sigma_k}^{H} =(q_v -q_v^{-1}) (\alpha_3)_{\sigma_1,\cdots,\sigma_k}^{H}.$$
From equation \eqref{eeeq}, we have 
\begin{align*}
&\Phi_n(q_v^{\frac{1}{n}}\alpha_1 - q_v^{-\frac{1}{n}}\alpha_2)\\=
&(-1)^{s(\alpha_1)}
q_v^{n(n-1)k}\sum_{\sigma_1,\dots,\sigma_k\in \fS}(-q_v^{\frac{1-n}{n}})^{\ell(\sigma_1)+\cdots+ \ell(\sigma_k)}(-1)^{K((\alpha_1)_{\sigma_1,\cdots,\sigma_k}^H)} q_v^{\frac{1}{n}} (\alpha_1)_{\sigma_1,\cdots,\sigma_k}^H\\
&-(-1)^{s(\alpha_2)}
q_v^{n(n-1)k}\sum_{\sigma_1,\dots,\sigma_k\in \fS}(-q_v^{\frac{1-n}{n}})^{\ell(\sigma_1)+\cdots+ \ell(\sigma_k)}(-1)^{K((\alpha_2)_{\sigma_1,\cdots,\sigma_k}^H)} q_v^{-\frac{1}{n}} (\alpha_2)_{\sigma_1,\cdots,\sigma_k}^H\\=
&(-1)^{s(\alpha_3)}
q_v^{n(n-1)k}\sum_{\sigma_1,\dots,\sigma_k\in \fS}(-q_v^{\frac{1-n}{n}})^{\ell(\sigma_1)+\cdots+ \ell(\sigma_k)}(-1)^{K((\alpha_3)_{\sigma_1,\cdots,\sigma_k}^H)} (q_v^{-\frac{1}{n}} (\alpha_2)_{\sigma_1,\cdots,\sigma_k}^H-q_v^{\frac{1}{n}} (\alpha_1)_{\sigma_1,\cdots,\sigma_k}^H )\\
=&(q_v-q_v^{-1})(-1)^{s(\alpha_3)}
q_v^{n(n-1)k}\sum_{\sigma_1,\dots,\sigma_k\in \fS}(-q_v^{\frac{1-n}{n}})^{\ell(\sigma_1)+\cdots+ \ell(\sigma_k)}(-1)^{K((\alpha_3)_{\sigma_1,\cdots,\sigma_k}^H)}  (\alpha_3)_{\sigma_1,\cdots,\sigma_k}^H\\
=&\Phi_n((q_v-q_v^{-1})\alpha_3).
\end{align*}

It is trivial that $\Phi_n$ preserves relations \eqref{w.twist} and \eqref{w.unknot}.

Relation \eqref{wzh.nfour}: In relation \eqref{wzh.nfour}, the open cube is denoted by $V$. Consider two based $n$-webs in $M$, namely $\beta$ and $\beta_{\sigma}$, with the property that they are identical outside $V$. The intersections $\beta \cap V$ and $\beta_{\sigma} \cap V$ correspond to pictures on the left-hand and right-hand sides of equation \eqref{wzh.nfour}, respectively.

If $\beta_{\sigma}$ has no sinks or sources, equation \eqref{eeeq} implies 
\begin{align*}
\Phi_n(\beta) &= (-1)^{s(\beta)}q_v^{n(n-1)}\sum_{\sigma\in \fS}(-1)^{K(\beta_{\sigma})} (-q_{v}^{\frac{1-n}{n}})^{\ell(\sigma)}\beta_{\sigma}\\=&
q_v^{n(n-1)}\sum_{\sigma\in \fS}(-1)^{K(\beta_{\sigma})+s(\beta_{\sigma})} (-q_{v}^{\frac{1-n}{n}})^{\ell(\sigma)}\beta_{\sigma}
\\=&\Phi_n\big(
q_v^{n(n-1)}\sum_{\sigma\in \fS}(-q_{v}^{\frac{1-n}{n}})^{\ell(\sigma)}\beta_{\sigma}\big).
\end{align*}

Suppose that $\beta_{\sigma}$ has $k$ sinks and $k$ sources with $k > 0$. We label the sinks of $\beta_{\sigma}$ from $1$ to $k$ and the sources of $\beta_{\sigma}$ from $1$ to $k$.
 We label the sink (resp. source) of $\beta$ shown in the relation \eqref{wzh.nfour} by $1$, and label the rest of the sinks (resp. sources) of $\beta$ from $2$ to $k+1$ such that, for each $1\leq i\leq k$, the  sink (resp. source) of $\beta$ labeled by $i+1$ is the sink (resp. source) of $\beta_{\sigma}$ labeled by $i$. Since the sink and the source of $\beta$ labeled by $1$ are already paired together, there exists an isotopy $F$ of $\beta$ such that the support of $F$ is contained in $M\setminus V$ and, for each $2\leq i\leq k+1$,  the sink and the source of $\beta^{F}$ labeled by $i$ are like the picture on the left-hand side of relation \eqref{wzh.nfour}. Obviously, we can regard $F$ as an isotopy of $\beta_{\sigma}$, and, for each $1\leq i\leq k$,  the sink and the source of $(\beta_{\sigma})^{F}$ labeled by $i$ are like the picture on the left-hand side of relation \eqref{wzh.nfour}. Then we have 
\begin{align*}
&\Phi_n\big(q_v^{n(n-1)}\sum_{\sigma\in \fS}(-q_v^{\frac{1-n}{n}})^{\ell(\sigma)}\beta_{\sigma}\big)\\
=&q_v^{n(n-1)}\sum_{\sigma\in \fS}(-q_v^{\frac{1-n}{n}})^{\ell(\sigma)}
(-1)^{s(\beta_{\sigma})}
q_v^{n(n-1)k}\sum_{\sigma_1,\dots,\sigma_k\in \fS}(-q_v^{\frac{1-n}{n}})^{\ell(\sigma_1)+\cdots+ \ell(\sigma_k)}(-1)^{K((\beta_{\sigma})_{\sigma_1,\cdots,\sigma_k}^F)} (\beta_{\sigma})_{\sigma_1,\cdots,\sigma_k}^F\\
=&q_v^{n(n-1)(k+1)}
(-1)^{s(\beta)}
\sum_{\sigma,\sigma_1,\dots,\sigma_k\in \fS}(-q_v^{\frac{1-n}{n}})^{\ell(\sigma)+\ell(\sigma_1)+\cdots+ \ell(\sigma_k)}(-1)^{K(\beta_{\sigma,\sigma_1,\cdots,\sigma_k}^F)} \beta_{\sigma,\sigma_1,\cdots,\sigma_k}^F\\
=&\Phi_n(\beta).
\end{align*}
\end{proof}

\def \MN {(M,\mathcal{N})}
\def \M {M,\mathcal{N}}

\section{The $R$-linear isomorphism between $S_n(M,\mathcal{N},v)$ and $S_n(M,\mathcal{N},-v)$}

For a marked 3-manifold $\MN$ and a positive integer $n$, we will show that there exists an $R$-linear isomorphism $\Psi_n: S_n(\M,v)\rightarrow S_n(\M,-v)$ such that it is an algebra isomorphism when $\MN$ is the thickening of a pb surface and $\Psi_n(\alpha) = \alpha$ for any $n$-web $\alpha$ in $\MN$ without endpoints.

Let $u = -v$. We have $q_u^{\frac{1}{n}} = u^2 = v^2 = q_v^{\frac{1}{n}}, q_u = u^{2n} = v^{2n} = q_v$. Then relations \eqref{w.cross}-\eqref{wzh.four} for $S_n(\M,u)$ are exactly the same with these relations for $S_n(\M,v)$. 


\begin{theorem}\label{5.1}
	Let $\MN$ be a marked 3-manifold, 
	and let $n$ be a positive integer.
	There exists an $R$-linear isomorphism $\Psi_n:S_n(\M,v)\rightarrow S_n(\M,-v)$ defined by
	\begin{equation*}
		\Psi_n(\alpha)=
		\begin{cases}
			(-1)^{\frac{(n-1)e(\alpha)}{2}}\alpha& n \text{ is odd} \\
			(-1)^{\frac{e(\alpha)}{2}}\alpha& n \text{ is even,}
		\end{cases}
	\end{equation*}
	where $\alpha\in S_n(\M,v)$ is a stated $n$-web in $\MN$. In particular, $\Psi_n$ is an algebra isomorphism when $\MN$ is the thickening of a pb surface and $\Psi_n(\alpha) = \alpha$ for any stated $n$-web $\alpha$ in $\MN$ without endpoints.

\end{theorem}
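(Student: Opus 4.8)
The plan is to define $\Psi_n$ by the stated scalar rule (depending only on the endpoint number $e(\alpha)$) and to verify that it carries each defining relation \eqref{w.cross}--\eqref{wzh.eight} of $S_n(\M,v)$ to the corresponding relation of $S_n(\M,-v)$, so that it descends to a well-defined linear map. Writing $u=-v$, the decisive first observation (already recorded before the statement) is that $q_u=q_v$ and, crucially, $q_u^{1/n}=u^2=v^2=q_v^{1/n}$ — unlike the situation in Theorem \ref{stated1}, where $q_u^{1/n}=\epsilon^2 q_v^{1/n}$ could be a nontrivial root of unity and forced corrections involving $t(\alpha)$ and $p(\alpha)$. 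Here every coefficient in \eqref{w.cross}--\eqref{wzh.four} and in \eqref{wzh.eight} is a function of $q_v$ and $q_v^{1/n}$ alone; in particular $t_u=t_v$, the quantum integer $[n]$ is unchanged, and the coefficient $q_v^{-1/n}(\cdots)$ of \eqref{wzh.eight} is literally identical for $u$ and $v$. Since $\Psi_n$ rescales a web only through $e(\alpha)$, and $e$ is unchanged across each of these relations (no $1$-valent vertex is created or destroyed), $\Psi_n$ preserves \eqref{w.cross}--\eqref{wzh.four} and \eqref{wzh.eight} for free. All the content is therefore concentrated in \eqref{wzh.five}, \eqref{wzh.six}, \eqref{wzh.seven}, whose coefficients $a_v$ and $c_{i,v}$ genuinely change under $v\mapsto -v$.

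The next step is to record these changes and reduce each relation to a sign identity. A direct substitution gives $c_{i,u}=(-1)^{n-1}c_{i,v}$ and $a_u=(-1)^{N}a_v$ with $N=\tfrac{n(n+1-2n^2)}{2}$ (an integer, as $n(n+1)$ is even). The endpoint count jumps by $n$ when passing from the left to the right side of \eqref{wzh.five} (the $n$-valent vertex is replaced by $n$ endpoints on the wall) and by $2$ in \eqref{wzh.six} and \eqref{wzh.seven}, exactly as in the proof of Theorem \ref{stated1}. Comparing the scalar $\Psi_n$ picks up against the change of constant, I would split on the parity of $n$. For $n$ even, where $\Psi_n(\alpha)=(-1)^{e(\alpha)/2}\alpha$ and $e(\alpha)$ is always even, the conditions become $a_u=(-1)^{n/2}a_v$ (from \eqref{wzh.five}) and $c_{i,u}=-c_{i,v}$ (from \eqref{wzh.six} and \eqref{wzh.seven}); the latter is immediate from $(-1)^{n-1}=-1$, and the former is the evaluation of $(-1)^N$ at even $n$. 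For $n$ odd, where $\Psi_n(\alpha)=(-1)^{(n-1)e(\alpha)/2}\alpha$, adding two endpoints contributes $(-1)^{n-1}=1$, so \eqref{wzh.six} and \eqref{wzh.seven} require only $c_{i,u}=c_{i,v}$ (true since $(-1)^{n-1}=1$), while \eqref{wzh.five} requires $a_u=(-1)^{n(n-1)/2}a_v$; I would confirm $(-1)^N=(-1)^{n(n-1)/2}$ by reducing both exponents modulo $2$.

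Finally I would assemble the global claims. Well-definedness on isotopy classes is clear because $e(\alpha)$ is an isotopy invariant. The inverse is the map of the same shape built from $-v\mapsto v$; since its scalar is a sign, composing the two gives the identity. When $\MN$ is the thickening of a pb surface, $e$ is additive under stacking, $e(\alpha\beta)=e(\alpha)+e(\beta)$, and in each parity case the scalar is a genuine homomorphism $(\mathbb{Z},+)\to\{\pm1\}$ evaluated at $e$, so $\Psi_n(\alpha\beta)=\Psi_n(\alpha)\Psi_n(\beta)$; for a web without endpoints $e(\alpha)=0$ yields $\Psi_n(\alpha)=\alpha$.

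I expect the only real obstacle to be the two sign identities for $a_u$ — namely $a_u=(-1)^{n/2}a_v$ for even $n$ and $a_u=(-1)^{n(n-1)/2}a_v$ for odd $n$. These are precisely what dictate the exponents $\tfrac{e(\alpha)}{2}$ and $\tfrac{(n-1)e(\alpha)}{2}$ in the definition of $\Psi_n$, and pinning them down requires careful bookkeeping of the fractional power in $a_v=q_v^{(n+1-2n^2)/4}$ together with $q_u^{1/(2n)}=-q_v^{1/(2n)}$; once these are settled, everything else is routine coefficient comparison.
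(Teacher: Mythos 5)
Your proposal is correct and follows essentially the same route as the paper's proof: both reduce everything to the observation that $q_u^{1/n}=q_v^{1/n}$ and $q_u=q_v$ (so relations \eqref{w.cross}--\eqref{wzh.four} and \eqref{wzh.eight} are untouched), then check \eqref{wzh.five}--\eqref{wzh.seven} by a parity split on $n$ using $c_{i,u}=(-1)^{n-1}c_{i,v}$ and the sign change of $a_v$, which your identities $a_u=(-1)^{n/2}a_v$ ($n$ even) and $a_u=(-1)^{n(n-1)/2}a_v$ ($n$ odd, matching the paper's $(-1)^k$ with $n=2k+1$) get right. Your uniform bookkeeping via $N=\tfrac{n(n+1-2n^2)}{2}$ and the explicit additivity argument for the algebra-map claim are minor streamlinings of the same argument, not a different approach.
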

\begin{proof}
	It is obvious that $\Psi_n$ is well-defined in the set of isotopy classes of stated $n$-webs in $\MN$. Then it suffices to show that $\Psi_n$ preserves relations \eqref{w.cross}-\eqref{wzh.eight}.
	
	Let $u = -v$. From the previous discussion, we know $q_u^{\frac{1}{n}} = q_v^{\frac{1}{n}},q_u = q_v$. Then, for each $1\leq i\leq n$, we have 
	\begin{align}\label{ci}
	c_{i,u} = (-q_u)^{n-i}(q_u^{\frac{1}{2n}})^{n-1} = (-1)^{n-1}(-q_v^{n-i})q_v^{\frac{n-1}{2n}} = (-1)^{n-1}c_{i,v}.
	\end{align}
	
	When $n$ is odd, we suppose $n = 2k+1$. Equation \eqref{ci} implies that
	$c_{i,u} = c_{i,u}$ for $1\leq i\leq n$. We have 
	$q_u^{\frac{1}{2}} = u^n = (-v)^n = -v^n = -q_v^{\frac{1}{2}}$, and 
	$a_u = q_u^{\frac{n+1}{4}}q_u^{-\frac{n^2}{2}} = q_u^{\frac{k+1}{2}}q_u^{\frac{-n^2}{2}} = (-1)^k  q_v^{\frac{k+1}{2}}q_v^{\frac{-n^2}{2}} = (-1)^k a_v.$ Trivially $\Psi_n$ preserves relations \eqref{w.cross}-\eqref{wzh.four} and \eqref{wzh.six}-\eqref{wzh.eight}. Let $\alpha$ (resp. $\alpha_{\sigma}$) be the stated $n$-web on the left-hand (resp. right-hand) side of relation \eqref{wzh.five}, then we have $e(\alpha_{\sigma}) = e(\alpha)+n$. We have 
	\begin{align*}
		\Psi_n(a_v\sum_{\sigma\in \fS}(-q_v)^{\ell(\sigma)}\alpha_{\sigma})
		= (-1)^k a_u\sum_{\sigma\in \fS}(-q_u)^{\ell(\sigma)} (-1)^{k(e(\alpha)+n)} \alpha_{\sigma} = (-1)^{ke(\alpha)}\alpha =\Psi_n(\alpha).
	\end{align*}
	
	When $n$ is even, supppose $n = 2\lambda$. Equation \eqref{ci} implies that
	$c_{i,u} = -c_{i,u}$ for $1\leq i\leq n$. We have 
	$q_u^{\frac{1}{2}} = u^n = (-v)^n = v^n = q_v^{\frac{1}{2}}$,  
	$q_u^{\frac{1}{4}} = u^\lambda = (-1)^\lambda v^\lambda = (-1)^\lambda q_v^{\frac{1}{4}}$, and $a_u = q_u^{\frac{n+1-2n^2}{4}} = (-1)^{\lambda(n+1-2n^2)} q_v^{\frac{n+1-2n^2}{4}} = (-1)^\lambda a_v.$ Trivially $\Psi_n$ preserves relations \eqref{w.cross}-\eqref{wzh.four}. 
	
	Relation \eqref{wzh.five}: We use $\beta_1$ (resp. $\beta_{1,\sigma}$) to denote the stated $n$-web on the left-hand (resp. right-hand) side of relation \eqref{wzh.five}. Then $e(\beta_{1,\sigma}) = e(\beta_{1})+n$. We have 
	\begin{align*}
		\Psi_n(a_v\sum_{\sigma\in \fS}(-q_v)^{\ell(\sigma)} \beta_{1,\sigma} ) &=(-1)^\lambda a_u\sum_{\sigma\in \fS}(-q_u)^{\ell(\sigma)} (-1)^{\frac{1}{2}(e(\beta_1)+n)} \beta_{1,\sigma} \\& = (-1)^{\frac{e(\beta_1)}{2}}\beta_1 = \Psi_n(\beta_1).
	\end{align*}
	
	Relation \eqref{wzh.six}: We have 
	$$\Psi_n(\raisebox{-.20in}{
		\begin{tikzpicture}
			\tikzset{->-/.style=
				{decoration={markings,mark=at position #1 with
						{\arrow{latex}}},postaction={decorate}}}
			\filldraw[draw=white,fill=gray!20] (-0.7,-0.7) rectangle (0,0.7);
			\draw [line width =1.5pt,decoration={markings, mark=at position 1 with {\arrow{>}}},postaction={decorate}](0,0.7)--(0,-0.7);
			\draw [color = black, line width =1pt] (0 ,0.3) arc (90:270:0.5 and 0.3);
			\node [right]  at(0,0.3) {$i$};
			\node [right] at(0,-0.3){$j$};
			\filldraw[fill=white,line width =0.8pt] (-0.5 ,0) circle (0.07);
	\end{tikzpicture}}) = -\raisebox{-.20in}{
	\begin{tikzpicture}
	\tikzset{->-/.style=
		{decoration={markings,mark=at position #1 with
				{\arrow{latex}}},postaction={decorate}}}
	\filldraw[draw=white,fill=gray!20] (-0.7,-0.7) rectangle (0,0.7);
	\draw [line width =1.5pt,decoration={markings, mark=at position 1 with {\arrow{>}}},postaction={decorate}](0,0.7)--(0,-0.7);
	\draw [color = black, line width =1pt] (0 ,0.3) arc (90:270:0.5 and 0.3);
	\node [right]  at(0,0.3) {$i$};
	\node [right] at(0,-0.3){$j$};
	\filldraw[fill=white,line width =0.8pt] (-0.5 ,0) circle (0.07);
\end{tikzpicture}} =-\delta_{\bar j,i }\,  c_{i,u}\ \raisebox{-.20in}{
\begin{tikzpicture}
\tikzset{->-/.style=
	{decoration={markings,mark=at position #1 with
			{\arrow{latex}}},postaction={decorate}}}
\filldraw[draw=white,fill=gray!20] (-0.7,-0.7) rectangle (0,0.7);
\draw [line width =1.5pt](0,0.7)--(0,-0.7);
\end{tikzpicture}} =\delta_{\bar j,i }\,  c_{i,v}\ \raisebox{-.20in}{
\begin{tikzpicture}
\tikzset{->-/.style=
	{decoration={markings,mark=at position #1 with
			{\arrow{latex}}},postaction={decorate}}}
\filldraw[draw=white,fill=gray!20] (-0.7,-0.7) rectangle (0,0.7);
\draw [line width =1.5pt](0,0.7)--(0,-0.7);
\end{tikzpicture}}\,.$$

Relation \eqref{wzh.seven}: We use $\beta_2$ (resp. $\beta_2'$) to denote the stated $n$-web on the left-hand (resp. right-hand) side of relation \eqref{wzh.seven}. Then $e(\beta_{2}') = e(\beta_{2})+2$. We have 
$$\Psi_n(\sum_{1\leq i\leq n}c_{\bar{i},v}^{-1} \beta_2')
= \sum_{1\leq i\leq n}-c_{\bar{i},u}^{-1} (-1)^{\frac{1}{2}(e(\beta_2)+2)} \beta_2'  = (-1)^{\frac{1}{2}e(\beta_2)} \beta_2 = \Psi_n(\beta_2).$$

It is a trivial check that $\Psi_n$ preserves relation \eqref{wzh.eight}.
\end{proof}

\begin{rem}
	Theorem \ref{5.1} is also true when $\MN$ is the generalized marked 3-manifold, please refer to the next section  for the definition of the generalized marked 3-manifold.
\end{rem}

Let \(\MN\) be a marked 3-manifold, and let \( e \) be an oriented open interval in \( \partial M \) such that the closure of \( \mathcal{N} \) is disjoint from the closure of \( e \). Then \((M, \mathcal{N} \cup e)\) is also a marked 3-manifold, and we say that \((M, \mathcal{N} \cup e)\) is obtained from \(\MN\) by adding an extra marking \( e \).  
We denote by \( l_e \) the \( R \)-linear map  
\[
l_e: S_n(\M, v) \to S_n(M, \mathcal{N} \cup e, v)
\]  
induced by the embedding of \(\MN\) into \((M, \mathcal{N} \cup e)\). We refer to \( l_e \) as the {\bf adding marking map}.  

We now state the following immediate Corollary.

\begin{corollary}\label{minus}
	 For each positive integer $n$,  the following  diagram commutes.
	  \begin{equation*}
	 	\begin{tikzcd}
	 		S_n(\M,v)  \arrow[r, "l_e"]
	 		\arrow[d, "\Psi_n"]  
	 		&  S_n(M,\mathcal{N}\cup e,v) \arrow[d, "\Psi_n"] \\
	 		S_n(\M,-v)  \arrow[r, "l_e"] 
	 		&  S_n(M,\mathcal{N}\cup e,-v)\\
	 	\end{tikzcd}.
	 \end{equation*}
\end{corollary}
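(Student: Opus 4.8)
The plan is to reduce the commutativity of the diagram to a single observation about how the adding marking map affects the number of endpoints. Since both $l_e$ and $\Psi_n$ are $R$-linear, it suffices to verify $\Psi_n(l_e(\alpha)) = l_e(\Psi_n(\alpha))$ for every stated $n$-web $\alpha$ in $\MN$, as such webs span $S_n(\M,v)$ over $R$.

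First I would record the key geometric fact: because $\alpha$ is a stated $n$-web in $\MN$, all of its endpoints lie in $\cN$, and hence in $\cN\cup e$. The embedding of $\MN$ into $(M,\cN\cup e)$ neither creates nor destroys endpoints, so $e(l_e(\alpha)) = e(\alpha)$; the extra marking $e$ merely enlarges the marking of the target without touching $\alpha$ itself.

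The computation is then immediate. By Theorem \ref{5.1}, $\Psi_n$ multiplies a stated $n$-web by a scalar depending only on the parity of $n$ and the number of its endpoints, namely $(-1)^{(n-1)e(\alpha)/2}$ when $n$ is odd and $(-1)^{e(\alpha)/2}$ when $n$ is even. Writing this common scalar as $c$ and using both $e(l_e(\alpha)) = e(\alpha)$ and the $R$-linearity of $l_e$, I obtain
$$\Psi_n(l_e(\alpha)) = c\, l_e(\alpha) = l_e(c\,\alpha) = l_e(\Psi_n(\alpha)),$$
which yields commutativity on the spanning set, and hence on all of $S_n(\M,v)$.

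I do not anticipate any genuine obstacle; the single point deserving care is the claim that adding the marking $e$ leaves the endpoint count unchanged, which is precisely what forces the two scalar factors to agree. No further compatibility of $\Psi_n$ with the skein relations is needed beyond what Theorem \ref{5.1} already establishes.
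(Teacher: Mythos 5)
Your proof is correct and is exactly the argument the paper intends: the corollary is stated as immediate precisely because $\Psi_n$ scales each stated $n$-web by a factor depending only on $n$ and the endpoint count $e(\alpha)$, and $l_e$ sends a web to the same web (same endpoints) in $(M,\mathcal{N}\cup e)$, so the two scalars coincide and the check on the spanning set suffices.
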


\def \cN {\mathcal{N}}

\section{Stated $SL_n$-TQFT}\label{secTQFT}
The idea of this section is motivated by Costantino and L{\^e}'s  paper \cite{CL2022TQFT}. They formulated the stated TQFT theory for $SL_2$. In this section, we will generalize their results to $SL_n$.

A {\bf generalized marked 3-manifold} a pair $\MN$, where $M$ is an oriented 3-manifold, and $\mathcal{N}\subset \partial M$ is a one dimensional submanifold consisting of oriented circles and oriented open intervals such that  there is no intersection between the closure of any two components. We use $\cN_1$ to denote the subset of $\cN$ consisting of all the oriented open intervals.

The definition of the stated $SL_n$-skein module of a generalized marked 3-manifold is the same with the definition of the stated $SL_n$-skein module of a marked 3-manifold. For a generalized marked 3-manifold $\MN$, we also use $S_n(\M,v)$ to denote the stated $SL_n$-skein module of $\MN$. The classical limit of the stated $SL_n$-skein module of the generalized marked 3-manifold is well-studied in \cite{wang2023stated}.

A {\bf marked surface} is a pair $(\Sigma,\mathcal{P})$, where $\Sigma$ is an oriented compact surface and $\mathcal{P}$ is a set of finite  points in $\partial\Sigma$, called marked points. We assume that every point of $\mathcal{P}$ is signed by ``$-$" or ``$+$", and every component of $\Sigma$ contains at least one marked point.

\def\MP{(\Sigma,\mathcal{P})}
\def\P{\Sigma,\mathcal{P}}

For a marked surface $\MP$, we can define a marked 3-manifold $\MN$, where $M = \Sigma\times [-1,1], \cN =\cP\times(-1,1)$. For any $p\in\cP$, the orientation of $\{p\}\times (-1,1)$ is the positive (resp. negative) orientation of $(-1,1)$ if the sign of $p$ is positive (resp. negative). We call $\MN$ the thickening of $\MP$, and define $S_n(\P,v)$ to be $S_n(\M,v)$. Then $S_n(\P,v)$ has an algebra structure given by stacking the stated $n$-webs, that is, for any two stated $n$-webs $\alpha,\beta$, the product $\alpha\beta$ is defined to be stacking $\alpha$ above $\beta$.

Let $\MN$ be a generalized marked 3-manifold, and $\MP$ be a marked surface. Suppose $\phi:\Sigma\rightarrow \partial M$ is an embedding such that $\phi(\Sigma)\cap \overline{\cN}=\phi(\Sigma)\cap \overline{\cN_1} = \phi(\cP)$. For any $p\in \cP$, we assume the sign of $p$ is $+$ (resp. $-$) if $\phi$ is orientation preserving and the component of $\overline{\cN}$ connecting to $p$ points towards (resp. away from) $p$, and the sign of $p$ is $-$ (resp. $+$) if $\phi$ is orientation reversing and the component of $\overline{\cN}$ connecting to $p$ points towards (resp. away from) $p$. 

Then a closed regular neighborhood $U(\Sigma)$ of $\phi(\Sigma)$ is isomorphic to the thickening of $\MP$, we use $\psi$ to denote this isomorphism from $\MP\times [-1,1] $ to $U(\Sigma)$. Then $S_n(\P,v)$ has a left (resp. right) action on $S_n(\M,v)$ if $\phi$ is orientation preserving (resp. reversing). For any stated $n$-webs $\alpha$ in $\MP\times [-1,1]$
and $\beta\in\MN$, we first isotope $\beta$ such that $\beta$ is away from 
$U(\Sigma)$, the action of $\alpha$ on $\beta$ is given by $\psi(\alpha)\cup\beta$. Costantino and L{\^e} defined the above actions for $n=2$ in subsection 4.5 in \cite{CL2022TQFT}. Here we recalled their construction and generalized it to all $n$ in an obvious way. 

\def\Sl{{\mathsf{Sl}}}

For a generalized marked  3-manifold $\MN$, Costantino and and L{\^e} defined the so called strict subsurface
in \cite{CL2022TQFT}. Here we recall their definition.
A {\bf strict subsurface} $\Sigma$ of $(M,\cN)$ is a proper  embedding $ \Sigma\rightarrow M$ of a compact surface such that $\Sigma$ is traversal to $\cN$ and  every connected component of $\Sigma$ intersects $\cN$.
Define  $\Sl_{\Sigma}\MN$ to be $(M', \cN')$, where $M' = M \setminus\Sigma$ and $\cN' = \cN\setminus \Sigma$.
Define $\cP = \Sigma\cap\cN$.
For a point $p\in\cP$, define its sign to be $+$ or $-$
according as the orientation of $M$ is equal the orientation of $\Sigma$ followed by the orientation
of the tangent to $\cN$ at $p$ or not. Then $\MP$ is a marked surface and there is a right
and a left action of $S_n(\P,v)$ on $S_n(\Sl_{\Sigma}\MN,v)$.

The obvious embedding from $\Sl_{\Sigma}\MN$ to $\MN$  induces an $R$-linear map $$\varphi_{\Sigma}:S_n(\Sl_{\Sigma}\MN,v)\rightarrow S_n(\M,v).$$

\begin{theorem}\label{TQFT}
	Assume that $\Sigma$ is a strict subsurface of a generalized marked 3-manifold $\MN$. Then the linear map $\varphi_{\Sigma}:S_n(\Sl_{\Sigma}\MN,v)
	\rightarrow S_n(\M,v)$ is surjective and its kernel is the $R$-span of $\{a\cdot x - x\cdot a\mid a\in S_n(\P,v), x\in  S_n(\Sl_{\Sigma}\MN,v)\}$.
\end{theorem}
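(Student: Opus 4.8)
The plan is to adapt the $SL_2$ argument of Costantino and L\^e \cite{CL2022TQFT} to the $SL_n$ relations \eqref{w.cross}--\eqref{wzh.eight}. Write $(M',\cN')=\Sl_\Sigma\MN$, and let $\Sigma_1,\Sigma_2\subset\partial M'$ be the two copies of $\Sigma$ produced by the cut, so that the gluing projection $\pr\colon M'\to M$ identifies $\Sigma_1$ with $\Sigma_2$ and $\varphi_\Sigma=\pr_*$. I will reduce the whole statement to producing a two-sided inverse of an induced map on a quotient.

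First I would record the inclusion of the commutator span in the kernel. For $a\in S_n(\P,v)$ and $x\in S_n(M',\cN',v)$, the element $a\cdot x$ is represented by stacking a representative of $a$ in a collar of $\Sigma_1$ and $x\cdot a$ by stacking it in a collar of $\Sigma_2$; after applying $\pr$ the copy of $a$ can be slid across $\Sigma$ by an isotopy of $M$ that avoids the rest of the diagram, so $\varphi_\Sigma(a\cdot x)=\varphi_\Sigma(x\cdot a)$. Hence the commutator span lies in $\ker\varphi_\Sigma$, and $\varphi_\Sigma$ descends to an $R$-linear map $\bar\varphi_\Sigma\colon Q\to S_n(\M,v)$, where $Q=S_n(M',\cN',v)/\langle a\cdot x-x\cdot a\mid a\in S_n(\P,v),\,x\in S_n(M',\cN',v)\rangle$.

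The heart of the proof is the construction of an inverse $\Lambda\colon S_n(\M,v)\to Q$. Given a stated $n$-web $\alpha$ in $M$, I would isotope it transversal to $\Sigma$ and then push it off $\Sigma$ completely: a closed component or interior edge meeting $\Sigma$ is first broken by sliding an innermost intersection toward $\partial\Sigma$ and cutting the strand at a nearby point of the marking $\cN$ via relations \eqref{wzh.six}--\eqref{wzh.seven}, summing over the state of the two new endpoints; each remaining arc meeting $\Sigma$ is then slid over $\partial\Sigma$ and off the surface, using that $\partial\Sigma$ meets $\cN$ at $\cP$. The resulting web is disjoint from $\Sigma$, hence equals $\pr$ of a web $x_\alpha$ in $M'$, and I set $\Lambda(\alpha)=[x_\alpha]\in Q$. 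Granting that $\Lambda$ is well defined, the equality $\bar\varphi_\Sigma\circ\Lambda=\mathrm{id}$ is immediate, since regluing the pushed-off web recovers $\alpha$ up to isotopy, and $\Lambda\circ\bar\varphi_\Sigma=\mathrm{id}$ holds because a web already disjoint from $\Sigma$ is pushed off trivially; together these give surjectivity of $\varphi_\Sigma$ and $\ker\varphi_\Sigma=\langle a\cdot x-x\cdot a\rangle$.

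The main obstacle is the well-definedness of $\Lambda$, that is, independence of the class $[x_\alpha]\in Q$ from the isotopy and from the choices in the push-off, together with compatibility with the defining relations. For the former I would put a generic isotopy of $\alpha$ in $M$ in general position with respect to $\Sigma$ and isolate the finitely many instants at which the web crosses $\Sigma$; each crossing event transports a sub-web from the $\Sigma_1$-side to the $\Sigma_2$-side, which replaces $x_\alpha$ by an element differing through a move of the form $a\cdot x\leftrightarrow x\cdot a$ and so fixes the class in $Q$. Compatibility with \eqref{w.cross}--\eqref{wzh.eight} is local: each relation is supported in a ball that may be isotoped off $\Sigma$, so it already holds in $S_n(M',\cN',v)$ and descends to $Q$; the only delicate point is that the cutting-at-the-marking step interacts with the $n$-valent vertices through \eqref{wzh.four}--\eqref{wzh.seven}, and one must check that breaking and regluing strands at $\cN$ commutes with the push-off, which follows from the compatibility of these relations with the collar of $\Sigma$. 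Assembling these facts shows that $\Lambda$ is a well-defined two-sided inverse of $\bar\varphi_\Sigma$, completing the proof.
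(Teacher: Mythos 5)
Your overall route is the paper's own: the paper proves this theorem precisely by importing the proof of Theorem 5.1 of \cite{CL2022TQFT} --- descend $\varphi_\Sigma$ to the quotient $\mathrm{HH}_0(S_n(\Sl_{\Sigma}\MN,v))$, invert it by removing every intersection of a web with $\Sigma$ through the cutting identity at a point of $\mathcal{P}=\Sigma\cap\mathcal{N}$ built from relations \eqref{wzh.six} and \eqref{wzh.seven}, and establish well-definedness by putting isotopies in general position, with interior wall-crossings of sub-webs becoming moves of the form $a\cdot x\leftrightarrow x\cdot a$. So there is no divergence of strategy to report.

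There is, however, a genuine gap, and it sits exactly at the one point where the paper does new work. The only step the paper verifies beyond citing \cite{CL2022TQFT} is the step you dispose of with ``which follows from the compatibility of these relations with the collar of $\Sigma$'' --- a circular sentence, not an argument. Well-definedness of your $\Lambda$ (independence of the push-off and compatibility with the boundary relations) requires explicit identities in $\mathrm{HH}_0(S_n(\Sl_{\Sigma}\MN,v))$ governing the ways a piece of web passes the wall along the marking: a single stated endpoint sliding past a point of $\mathcal{P}$ (the paper's \eqref{eq3}, derived from \eqref{wzh.six}); a returning cup or cap sliding past (\eqref{eq2}, from \eqref{wzh.six} and \eqref{wzh.seven}); and --- the genuinely $SL_n$ phenomenon, absent for $n=2$ --- an $n$-valent sink or source crossing the wall (\eqref{eq1}), whose proof needs the expansion \eqref{wzh.five} of a vertex at the marking together with the evaluation of a vertex with all legs on the marking, equation (53) of \cite{le2021stated}. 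You name \eqref{wzh.four}--\eqref{wzh.seven} as the relevant relations but never formulate, let alone prove, these three identities; without them neither the independence of $\Lambda(\alpha)$ from the chosen cutting procedure nor its compatibility with \eqref{wzh.five}--\eqref{wzh.eight} is established, and the kernel computation does not go through. A smaller slip in the same passage: an arc meeting $\Sigma$ can never be ``slid over $\partial\Sigma$ and off the surface'' by isotopy, since $\partial\Sigma\subset\partial M$; every intersection point, on arcs as well as on closed components, must be removed by the cutting identity at a point of $\mathcal{P}$, which is exactly where the hypothesis that every component of a strict subsurface meets $\mathcal{N}$ enters.
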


Note that $$S_n(\Sl_{\Sigma}\MN,v)/R\text{-span}\{a\cdot x - x\cdot a\mid a\in S_n(\P,v), x\in  S_n(\Sl_{\Sigma}\MN,v)\},$$ denoted as $\text{HH}_0(S_n(\Sl_{\Sigma}\MN,v))$, is the 0-th Hochschild homology of the $S_n(\P,v)$-bimodule $S_n(\Sl_{\Sigma}\MN,v)$.

\begin{proof}
The proof of Theorem 5.1 in \cite{CL2022TQFT} applies almost directly in this setting. The only remaining step is to verify the following equations, along with their counterparts obtained by reversing the arrows of all stated \( n \)-webs, in \( \text{HH}_0(S_n(\Sl_{\Sigma} \MN, v)) \):
	\begin{align}\label{eq1}
		\raisebox{-.90in}{
			\begin{tikzpicture}
				\tikzset{->-/.style=
					{decoration={markings,mark=at position #1 with
							{\arrow{latex}}},postaction={decorate}}}
				\filldraw[draw=white,fill=gray!20] (-2.4,0.4) rectangle (0,2.8);
				\draw [line width =1.5pt,decoration={markings, mark=at position 1 with {\arrow{>}}},postaction={decorate}](0,2.8)--(0,0.4);
				\filldraw[draw=white,fill=gray!20] (-2.4,0.2) rectangle (0,-2.2);
				\draw [line width =1.5pt,decoration={markings, mark=at position 1 with {\arrow{>}}},postaction={decorate}](0,0.2)--(0,-2.2);
				\draw [line width =1pt,decoration={markings, mark=at position 1 with {\arrow{>}}},postaction={decorate}](-1.2,2.8)--(-1.2,2.4);
				\draw [line width =1pt,decoration={markings, mark=at position 1 with {\arrow{>}}},postaction={decorate}](-1.6,2.8)--(-1.6,2.4);
				\draw [line width =1pt,decoration={markings, mark=at position 1 with {\arrow{>}}},postaction={decorate}](-0.4,2.8)--(-0.4,2.4);
				\draw [line width =1pt](-1.2,2.4)--(-1.2,1.2);
				\draw [line width =1pt](-1.6,2.4)--(-1.2,1.2);
				\draw [line width =1pt](-0.4,2.4)--(-1.2,1.2);
				\node  at(-0.8,2.2) {$\cdots$};
		\end{tikzpicture}}
		=
		\sum_{1\leq i_1,\cdots,i_n\leq n}c_{\overline{i_1},v}^{-1}\cdots c_{\overline{i_n},v}^{-1}
		\raisebox{-.90in}{
			\begin{tikzpicture}
				\tikzset{->-/.style=
					{decoration={markings,mark=at position #1 with
							{\arrow{latex}}},postaction={decorate}}}
				\filldraw[draw=white,fill=gray!20] (-2.4,0.4) rectangle (0,2.8);
				\draw [line width =1.5pt,decoration={markings, mark=at position 1 with {\arrow{>}}},postaction={decorate}](0,2.8)--(0,0.4);
				\filldraw[draw=white,fill=gray!20] (-2.4,0.2) rectangle (0,-2.2);
				\draw [line width =1.5pt,decoration={markings, mark=at position 1 with {\arrow{>}}},postaction={decorate}](0,0.2)--(0,-2.2);
				\node [right]  at(0,2) {$i_1$};
				\node [right] at(0,1.2){$i_{n-1}$};
				\node [right]  at(0,0.8) {$i_n$};
				\node[left]  at(0,1.7){$\vdots$};
				\draw [line width =1pt,decoration={markings, mark=at position 1 with {\arrow{>}}},postaction={decorate}](-1.2,2.8)--(-1.2,2.4);
				\draw [line width =1pt,decoration={markings, mark=at position 1 with {\arrow{>}}},postaction={decorate}](-1.6,2.8)--(-1.6,2.4);
				\draw [line width =1pt,decoration={markings, mark=at position 1 with {\arrow{>}}},postaction={decorate}](-0.4,2.8)--(-0.4,2.4);
				\draw [line width =1pt] (-0.4 ,2.4) arc (180:270:0.4);
				\draw [line width =1pt] (-1.2 ,2.4) arc (180:270:1.2);
				\draw [line width =1pt] (-1.6 ,2.4) arc (180:270:1.6);
				\draw [line width =1pt,decoration={markings, mark=at position 0.4 with {\arrow{>}}},postaction={decorate}](-1.2,-1.6)--(-1.2,-2.0);
				\draw [line width =1pt,decoration={markings, mark=at position 0.4 with {\arrow{>}}},postaction={decorate}](-1.6,-1.6)--(-1.2,-2.0);
				\draw [line width =1pt,decoration={markings, mark=at position 0.75 with {\arrow{>}}},postaction={decorate}](0,-1.2)--(-1.2,-2.0);
				\draw[line width =1pt] (0,-0.6) parabola bend (-0.4,-0.5) (-1.2,-1.6);
				\draw[line width =1pt] (0,-0.2) parabola bend (-0.6,0) (-1.6,-1.6);
				\node [right]  at(0,-0.2) {$\overline{i_n}$};
				\node [right] at(0,-0.6){$\overline{i_{n-1}}$};
				\node [right]  at(0,-1.2) {$\overline{i_1}$};
				\node[left]  at(0,-0.9){$\vdots$};
		\end{tikzpicture}}
	\end{align}

	\begin{align}\label{eq2}
		\raisebox{-.90in}{
			\begin{tikzpicture}
				\tikzset{->-/.style=
					{decoration={markings,mark=at position #1 with
							{\arrow{latex}}},postaction={decorate}}}
				\filldraw[draw=white,fill=gray!20] (-2.4,0.4) rectangle (0,2.8);
				\draw [line width =1.5pt,decoration={markings, mark=at position 1 with {\arrow{>}}},postaction={decorate}](0,2.8)--(0,0.4);
				\filldraw[draw=white,fill=gray!20] (-2.4,0.2) rectangle (0,-2.2);
				\draw [line width =1.5pt,decoration={markings, mark=at position 1 with {\arrow{>}}},postaction={decorate}](0,0.2)--(0,-2.2);
				\node [right]  at(0,2) {$i$};
				\node [right] at(0,1.2){$j$};
				\draw [line width =1pt,decoration={markings, mark=at position 1 with {\arrow{>}}},postaction={decorate}](-1.2,2.8)--(-1.2,2.4);
				\draw [line width =1pt,decoration={markings, mark=at position 1 with {\arrow{<}}},postaction={decorate}](-0.4,2.8)--(-0.4,2.4);
				\draw [line width =1pt] (-1.2 ,2.4) -- (-1.2 ,1.6) ;
				\draw [line width =1pt] (-0.4 ,2.4) -- (-0.4 ,1.6) ;
				\draw [line width =1pt] (-1.2 ,1.6) arc (180:360:0.4);
		\end{tikzpicture}}
		=
		\sum_{1\leq i,j\leq n}c_{\bar{i},v}^{-1}c_{\bar{j},v}^{-1}
		\raisebox{-.90in}{
			\begin{tikzpicture}
				\tikzset{->-/.style=
					{decoration={markings,mark=at position #1 with
							{\arrow{latex}}},postaction={decorate}}}
				\filldraw[draw=white,fill=gray!20] (-2.4,0.4) rectangle (0,2.8);
				\draw [line width =1.5pt,decoration={markings, mark=at position 1 with {\arrow{>}}},postaction={decorate}](0,2.8)--(0,0.4);
				\filldraw[draw=white,fill=gray!20] (-2.4,0.2) rectangle (0,-2.2);
				\draw [line width =1.5pt,decoration={markings, mark=at position 1 with {\arrow{>}}},postaction={decorate}](0,0.2)--(0,-2.2);
				\node [right]  at(0,2) {$i$};
				\node [right] at(0,1.2){$j$};
				\draw [line width =1pt,decoration={markings, mark=at position 1 with {\arrow{>}}},postaction={decorate}](-1.2,2.8)--(-1.2,2.4);
				\draw [line width =1pt,decoration={markings, mark=at position 1 with {\arrow{<}}},postaction={decorate}](-0.4,2.8)--(-0.4,2.4);
				\draw [line width =1pt] (-0.4 ,2.4) arc (180:270:0.4);
				\draw [line width =1pt] (-1.2 ,2.4) arc (180:270:1.2);
				\draw [line width =1pt,decoration={markings, mark=at position 1 with {\arrow{>}}},postaction={decorate}](0 ,-0.4) -- (-0.3 ,-0.4);
				\draw [line width =1pt,decoration={markings, mark=at position 1 with {\arrow{<}}},postaction={decorate}](-0 ,-1.4) -- (-0.3 ,-1.4);
				\draw [line width =1pt] (-0.3 ,-0.4) arc (90:270:0.5);
				\node [right]  at(0,-0.4) {$\bar{j}$};
				\node [right] at(0,-1.4){$\bar{i}$};
		\end{tikzpicture}}
	\end{align}

	\begin{align}\label{eq3}
		\raisebox{-.90in}{
			\begin{tikzpicture}
				\tikzset{->-/.style=
					{decoration={markings,mark=at position #1 with
							{\arrow{latex}}},postaction={decorate}}}
				\filldraw[draw=white,fill=gray!20] (-2.4,0.4) rectangle (0,2.8);
				\draw [line width =1.5pt,decoration={markings, mark=at position 1 with {\arrow{>}}},postaction={decorate}](0,2.8)--(0,0.4);
				\filldraw[draw=white,fill=gray!20] (-2.4,0.2) rectangle (0,-2.2);
				\draw [line width =1.5pt,decoration={markings, mark=at position 1 with {\arrow{>}}},postaction={decorate}](0,0.2)--(0,-2.2);
				\node [right] at(0,1.2){$j$};
				\draw [line width =1pt,decoration={markings, mark=at position 1 with {\arrow{>}}},postaction={decorate}](-1.2,2.8)--(-1.2,2.4);
				\draw [line width =1pt] (-1.2 ,2.4) arc (180:270:1.2);
		\end{tikzpicture}}
		=
		\sum_{1\leq i\leq n}c_{\bar{i},v}^{-1}
		\raisebox{-.90in}{
			\begin{tikzpicture}
				\tikzset{->-/.style=
					{decoration={markings,mark=at position #1 with
							{\arrow{latex}}},postaction={decorate}}}
				\filldraw[draw=white,fill=gray!20] (-2.4,0.4) rectangle (0,2.8);
				\draw [line width =1.5pt,decoration={markings, mark=at position 1 with {\arrow{>}}},postaction={decorate}](0,2.8)--(0,0.4);
				\filldraw[draw=white,fill=gray!20] (-2.4,0.2) rectangle (0,-2.2);
				\draw [line width =1.5pt,decoration={markings, mark=at position 1 with {\arrow{>}}},postaction={decorate}](0,0.2)--(0,-2.2);
				\node [right] at(0,1.2){$i$};
				\draw [line width =1pt,decoration={markings, mark=at position 1 with {\arrow{>}}},postaction={decorate}](-1.2,2.8)--(-1.2,2.4);
				\draw [line width =1pt] (-1.2 ,2.4) arc (180:270:1.2);
				\draw [line width =1pt,decoration={markings, mark=at position 1 with {\arrow{>}}},postaction={decorate}](0 ,-0.4) -- (-0.3 ,-0.4);
				\draw [line width =1pt,decoration={markings, mark=at position 1 with {\arrow{<}}},postaction={decorate}](-0 ,-1.4) -- (-0.3 ,-1.4);
				\draw [line width =1pt] (-0.3 ,-0.4) arc (90:270:0.5);
				\node [right]  at(0,-0.4) {$\bar{i}$};
				\node [right] at(0,-1.4){$j$};
		\end{tikzpicture}}
	\end{align}

Equation \eqref{eq1} and its counterpart follow from equation (53) in \cite{le2021stated} and relation \eqref{wzh.five}.  
Equation \eqref{eq2} and its counterpart arise from relations \eqref{wzh.six} and \eqref{wzh.seven}.  
Equation \eqref{eq3} and its counterpart are derived from relation \eqref{wzh.six}.  

By applying equations \eqref{eq1}–\eqref{eq3} along with the techniques used in the proof of Theorem 5.1 in \cite{CL2022TQFT}, we can easily establish Theorem \ref{TQFT}.

\end{proof}

A {\bf decorated manifold} is 5-tuple $\mathbb{M}= (M,\partial^{+}M,\partial^{-}M,\partial^{s}M,\mathcal{N})$ with the following properties: (1) $\MN$ is a generalized marked  3-manifold; (2) $\partial^{+}M, \partial^{-}M, \partial^{s}M\subset \partial M$ are compact surfaces with boundaries with disjoint interiors and
oriented as induced by the orientation of $M$, such that $\partial^{+}M\cup\partial^{-}M\cup \partial^{s}M= \partial M$ and $\partial^{+}M\cap\partial^{-}M=\emptyset$; (3) $(\partial^{\pm1}M,\overline{\mathcal{N}}\cap \partial^{\pm1}M)$ are marked surfaces, where the signs of the marked points are determined by the orientation of $\mathcal{N}$ and $\partial^{\pm}M$. For a detailed definition of decorated manifolds, see Definition 6.1 in \cite{CL2022TQFT}.

In Definition 6.2 of \cite{CL2022TQFT}, Costantino and L{\^e} introduced the {\bf category of decorated cobordisms}, denoted by \(\text{DeCob}\). Here we briefly recall its definition.  
The objects of \(\text{DeCob}\) are marked surfaces. A morphism from \(\Sigma_-\) to \(\Sigma_+\) is given by the diffeomorphism class of a decorated manifold  
\[
\mathbb{M} = (M, \partial^+ M, \partial^- M, \partial^s M, \mathcal{N})
\]  
endowed with diffeomorphisms  
\[
\phi_{\pm} : \partial^{\pm} M \to \Sigma_{\pm}
\]  
such that \(\phi_+\) is orientation-preserving and \(\phi_-\) is orientation-reversing. Morphisms are composed by gluing decorated manifolds along a common marked surface. The identity morphism is given by the thickening of the corresponding marked surface. The category \(\text{DeCob}\) is symmetric monoidal, with the monoidal operation \(\otimes\) given by disjoint union.  

Let \(\text{Mor}\) be the {\bf Morita category}, whose objects are \( R \)-algebras and whose morphisms are isomorphism classes of bimodules in the category of \( R \)-modules. The composition of morphisms is given by the tensor product over the intermediate algebra. The identity morphism for an algebra \( A \) is the isomorphism class of \( A \) as a bimodule over itself via left and right multiplication. The category \(\text{Mor}\) is symmetric monoidal, with the monoidal operation given by the tensor product \(\otimes_R\).  

For any object \( \MP \in \text{DeCob} \), we define \( S_n(\MP) \) as \( S_n(\P, v) \), which is an object in \(\text{Mor}\). For any morphism  
\[
\mathbb{M} = (M, \partial^+ M, \partial^- M, \partial^s M, \mathcal{N}): \Sigma_- \to \Sigma_+,
\]  
we define \( S_n(\mathbb{M}) \) as \( S_n(\M, v) \), which is a right module over \( S_n(\Sigma_-) \) and a left module over \( S_n(\Sigma_+) \). Thus, \( S_n(\mathbb{M}) \) defines a morphism from \( S_n(\Sigma_-) \) to \( S_n(\Sigma_+) \) in \(\text{Mor}\).  

The following theorem is the main result of this section. It generalizes Theorem 6.5 of Costantino and L{\^e} in \cite{CL2022TQFT}, where they proved the statement for \( n = 2 \).

\begin{theorem}\label{main}
	For each positive integer $n$, we have that
	$S_n:\text{DeCob}\rightarrow \text{Mor}$ is a symmetric monoidal functor.
\end{theorem}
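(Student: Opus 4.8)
The plan is to follow the argument Costantino and L\^e give for their Theorem 6.5 in \cite{CL2022TQFT}, using Theorem \ref{TQFT} as the $SL_n$-analogue of their gluing computation. Concretely, four things must be checked: that $S_n$ is well defined on morphisms (which are diffeomorphism classes of decorated manifolds), that it sends identity morphisms to identity bimodules and respects composition up to canonical bimodule isomorphisms, that it is monoidal on objects and morphisms, and finally that it is compatible with the symmetry and coherence isomorphisms. Well-definedness on morphisms is immediate from the functoriality of stated $SL_n$-skein modules recalled in the Functoriality subsection: a diffeomorphism of decorated manifolds induces an isomorphism of the associated bimodules.

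Monoidality and identities are structural and I would treat them first. For a disjoint union of marked surfaces (resp.\ of decorated manifolds), every stated $n$-web splits as a union of webs supported in the two pieces, giving natural isomorphisms $S_n(\Sigma_1\sqcup\Sigma_2)\simeq S_n(\Sigma_1)\otimes_R S_n(\Sigma_2)$ and $S_n(\mathbb{M}_1\sqcup\mathbb{M}_2)\simeq S_n(\mathbb{M}_1)\otimes_R S_n(\mathbb{M}_2)$; these respect the algebra and bimodule structures because stacking and the boundary actions are performed componentwise. The identity morphism $\mathrm{id}_{\Sigma}$ is the thickening $\Sigma\times[-1,1]$, whose skein module is $S_n(\P,v)$ by definition, and the two boundary actions coming from $\partial^{+}$ and $\partial^{-}$ are precisely left and right multiplication, so $S_n(\mathrm{id}_{\Sigma})$ is the identity bimodule in $\text{Mor}$.

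The core step is composition. Given $\mathbb{M}:\Sigma_-\to\Sigma_0$ and $\mathbb{M}':\Sigma_0\to\Sigma_+$, their composite $\mathbb{M}'\circ\mathbb{M}$ is obtained by gluing along $\Sigma_0$, inside which a pushed-in copy of $\Sigma_0$ sits as a strict subsurface with $\Sl_{\Sigma_0}(\mathbb{M}'\circ\mathbb{M})=\mathbb{M}\sqcup\mathbb{M}'$. Theorem \ref{TQFT}, applied to this strict subsurface, yields a canonical isomorphism of $S_n(\Sigma_0)$-bimodules
\[
S_n(\mathbb{M}'\circ\mathbb{M})\simeq \mathrm{HH}_0\big(S_n(\mathbb{M}\sqcup\mathbb{M}')\big).
\]
Combining this with the monoidal isomorphism $S_n(\mathbb{M}\sqcup\mathbb{M}')\simeq S_n(\mathbb{M})\otimes_R S_n(\mathbb{M}')$ and the elementary identity $\mathrm{HH}_0(N\otimes_R P)\simeq N\otimes_A P$ for a right $A$-module $N$ and a left $A$-module $P$, with $A=S_n(\Sigma_0)$, $N=S_n(\mathbb{M})$, and $P=S_n(\mathbb{M}')$, gives $S_n(\mathbb{M}'\circ\mathbb{M})\simeq S_n(\mathbb{M})\otimes_{S_n(\Sigma_0)}S_n(\mathbb{M}')$, which is exactly composition in $\text{Mor}$. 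I would then verify that this isomorphism also intertwines the outer $S_n(\Sigma_+)$- and $S_n(\Sigma_-)$-actions, which holds because those actions are supported near $\partial^{\pm}$, away from $\Sigma_0$, and hence commute with $\varphi_{\Sigma_0}$.

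The remaining work is symmetry and coherence. The symmetry isomorphisms are inherited from the factor-swap $S_n(\Sigma_1)\otimes_R S_n(\Sigma_2)\cong S_n(\Sigma_2)\otimes_R S_n(\Sigma_1)$, which corresponds to the swap of disjoint-union summands, and the associativity and unit coherence diagrams reduce to naturality of the disjoint-union isomorphisms together with associativity of the relative tensor product. I expect the main obstacle to lie in the composition step: namely, checking that the isomorphism produced by Theorem \ref{TQFT} is natural as a bimodule isomorphism, independent of the choices used to identify $\Sl_{\Sigma_0}(\mathbb{M}'\circ\mathbb{M})$ with $\mathbb{M}\sqcup\mathbb{M}'$, and that it simultaneously intertwines all three algebra actions. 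This is where the skein-theoretic input already verified in Theorem \ref{TQFT} (the equations \eqref{eq1}--\eqref{eq3} and their arrow-reversed counterparts) must be matched precisely with the Morita composition, and where following the bookkeeping of \cite{CL2022TQFT} carefully is essential.
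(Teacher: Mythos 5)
Your proposal is correct and takes essentially the same route as the paper: the paper's proof simply invokes Costantino--L{\^e}'s argument for Theorem 6.5 of \cite{CL2022TQFT}, observing that monoidality and identities are immediate and that compatibility with composition is exactly what Theorem \ref{TQFT} supplies. Your write-up spells out those same steps in more detail --- the disjoint-union splitting of webs, the thickening giving the identity bimodule, and the identification $S_n(\mathbb{M}'\circ\mathbb{M})\simeq \mathrm{HH}_0\bigl(S_n(\mathbb{M})\otimes_R S_n(\mathbb{M}')\bigr)\simeq S_n(\mathbb{M})\otimes_{S_n(\Sigma_0)}S_n(\mathbb{M}')$ via Theorem \ref{TQFT} --- which is precisely the bookkeeping the paper delegates to \cite{CL2022TQFT}.
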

\begin{proof}
	Costantino and L{\^e}'s proof for Theorem 6.5 in \cite{CL2022TQFT} works here. We briefly recall their proof. 
	
	It is obvious that $S_n$ preserves the symmetric monoidal structure and maps the identity morphism to the identity morphism.  
	It suffices to show that $S_n$ is compatible with the composition of the morphisms. This follows from Theorem \ref{TQFT}.
\end{proof}

\begin{rem}
For an oriented 3-manifold $M$, we denote by $\widetilde{M}$ the marked 3-manifold obtained by removing an open 3-dimensional ball from $M$ and placing a marking on the newly created spherical boundary component.  
For any two oriented 3-manifolds $M_1$ and $M_2$, Theorem 6.9 in \cite{CL2022TQFT} states that  
\begin{align}\label{eq-foundamental}
   S_n(\widetilde{M_1\# M_2}) = S_n(\widetilde{M_1}) \otimes_R S_n(\widetilde{M_2})
\end{align}  
when $n = 2$.

Let \( H_g^+ \) (resp. \( H_g^- \)) be the decorated manifold whose underlying 3-manifold is a handlebody of genus \( g \), with  
\[
\partial^- H_g^+ = (D^2, +) \quad \text{(resp. } \partial^+ H_g^- = (D^2, +) \text{)},
\]  
where \( D^2 \) is a disk with a single marked point. The surface \( \partial^s H_g^\pm \) is a regular neighborhood of \( \partial D^2 \), and  
\[
\partial^+ H_g^+ = \partial H_g^+ \setminus (\partial^- H_g^+ \cup \partial^s H_g^+)  
\quad \text{(resp. } \partial^- H_g^- = \partial H_g^- \setminus (\partial^+ H_g^- \cup \partial^s H_g^-) \text{)}.
\]  
Let \( M = H_g \cup H_g' \) be a Heegaard decomposition of a closed, oriented 3-manifold.
Theorem 6.10 in \cite{CL2022TQFT} states that 
\begin{align}\label{eq-Heegaard}
S_n(\widetilde{M})=S_n(H_g^+)\otimes_{S_n(\partial^+ H_g)}
   S_n((H_g')^{-})
\end{align}
when $n=2$.

Using Theorem \ref{main}, we can easily verify that equations \eqref{eq-foundamental} and \eqref{eq-Heegaard} hold for any general $n$.

\end{rem}

\section{Stated $SL_n$-skein modules at 4-th roots of unity}
In this section, we focus on the stated \( SL_n \)-skein modules at \( \epsilon \in R \) satisfying \( \epsilon^4 = 1 \). For example, when \( R = \mathbb{C} \), we have \( \epsilon = \pm 1, \pm i \), where \( i \in \mathbb{C} \) with \( i^2 = -1 \).  
Let \( \MN \) be a connected marked 3-manifold with \( \mathcal{N} \neq \emptyset \), and let \( (M, \mathcal{N}') \) be a marked 3-manifold obtained from \( \MN \) by adding one extra marking. We will prove that  
\[
S_n(M, \mathcal{N}', \epsilon) = S_n(\M, \epsilon) \otimes_R O_{q_{\epsilon}}(SL_n),
\]  
and that the \( R \)-linear map from \( S_n(\M, \epsilon) \) to \( S_n(M, \mathcal{N}', \epsilon) \), induced by the obvious embedding from \( \MN \) to \( (M, \mathcal{N}') \), is injective.

\def \cN {\mathcal{N}}
\def \SM{S_n(\M,\epsilon)}

In this section, we will always assume $\epsilon\in R$ with $\epsilon^4 = 1$. 
Then $q_{\epsilon}^{\frac{1}{n}} =\pm 1, q_{\epsilon} =\pm 1$.

Let $\MN$ be a  marked  3-manifold. In $S_n(\M,\epsilon)$, we have
\begin{align}\label{root}
\raisebox{-.20in}{	
	\begin{tikzpicture}
		\tikzset{->-/.style=
			{decoration={markings,mark=at position #1 with
					{\arrow{latex}}},postaction={decorate}}}
		\filldraw[draw=white,fill=gray!20] (-0,-0.2) rectangle (1, 1.2);
		\draw [line width =1pt,decoration={markings, mark=at position 0.5 with {\arrow{>}}},postaction={decorate}](0.6,0.6)--(1,1);
		\draw [line width =1pt,decoration={markings, mark=at position 0.5 with {\arrow{>}}},postaction={decorate}](0.6,0.4)--(1,0);
		\draw[line width =1pt] (0,0)--(0.4,0.4);
		\draw[line width =1pt] (0,1)--(0.4,0.6);
		\draw[line width =1pt] (0.4,0.6)--(0.6,0.4);
	\end{tikzpicture}
}=
\raisebox{-.20in}{
	\begin{tikzpicture}
		\tikzset{->-/.style=
			{decoration={markings,mark=at position #1 with
					{\arrow{latex}}},postaction={decorate}}}
		\filldraw[draw=white,fill=gray!20] (-0,-0.2) rectangle (1, 1.2);
		\draw [line width =1pt,decoration={markings, mark=at position 0.5 with {\arrow{>}}},postaction={decorate}](0.6,0.6)--(1,1);
		\draw [line width =1pt,decoration={markings, mark=at position 0.5 with {\arrow{>}}},postaction={decorate}](0.6,0.4)--(1,0);
		\draw[line width =1pt] (0,0)--(0.4,0.4);
		\draw[line width =1pt] (0,1)--(0.4,0.6);
		\draw[line width =1pt] (0.6,0.6)--(0.4,0.4);
	\end{tikzpicture}
},\;
\raisebox{-.20in}{	
	\begin{tikzpicture}
		\tikzset{->-/.style=			
			{decoration={markings,mark=at position #1 with					
					{\arrow{latex}}},postaction={decorate}}}
		\filldraw[draw=white,fill=gray!20] (-0,-0.2) rectangle (1, 1.2);
		\draw [line width =1.5pt,decoration={markings, mark=at position 1 with {\arrow{>}}},postaction={decorate}](1,1.2)--(1,-0.2);
		\draw [line width =1pt](0.6,0.6)--(1,1);
		\draw [line width =1pt](0.6,0.4)--(1,0);
		\draw[line width =1pt] (0,0)--(0.4,0.4);
		\draw[line width =1pt] (0,1)--(0.4,0.6);
		\draw[line width =1pt] (0.4,0.6)--(0.6,0.4);
		\filldraw[fill=white,line width =0.8pt] (0.2 ,0.2) circle (0.07);
		\filldraw[fill=white,line width =0.8pt] (0.2 ,0.8) circle (0.07);
		\node [right]  at(1,1) {$i$};
		\node [right]  at(1,0) {$j$};
	\end{tikzpicture}
} =q_{\epsilon}^{-\frac{1}{n}+\delta_{i,j}}\raisebox{-.20in}{
	\begin{tikzpicture}
		\tikzset{->-/.style=			
			{decoration={markings,mark=at position #1 with					
					{\arrow{latex}}},postaction={decorate}}}
		\filldraw[draw=white,fill=gray!20] (-0,-0.2) rectangle (1, 1.2);
		\draw [line width =1.5pt,decoration={markings, mark=at position 1 with {\arrow{>}}},postaction={decorate}](1,1.2)--(1,-0.2);
		\draw [line width =1pt](0,0.8)--(1,0.8);
		\draw [line width =1pt](0,0.2)--(1,0.2);
		\filldraw[fill=white,line width =0.8pt] (0.2 ,0.8) circle (0.07);
		\filldraw[fill=white,line width =0.8pt] (0.2 ,0.2) circle (0.07);
		\node [right]  at(1,0.8) {$j$};
		\node [right]  at(1,0.2) {$i$};
	\end{tikzpicture}
},\;
\raisebox{-.20in}{	
	\begin{tikzpicture}
		\tikzset{->-/.style=			
			{decoration={markings,mark=at position #1 with					
					{\arrow{latex}}},postaction={decorate}}}
		\filldraw[draw=white,fill=gray!20] (-0,-0.2) rectangle (1, 1.2);
		\draw [line width =1.5pt,decoration={markings, mark=at position 1 with {\arrow{>}}},postaction={decorate}](1,1.2)--(1,-0.2);
		\draw [line width =1pt](0.6,0.6)--(1,1);
		\draw [line width =1pt](0.6,0.4)--(1,0);
		\draw[line width =1pt] (0,0)--(0.4,0.4);
		\draw[line width =1pt] (0,1)--(0.4,0.6);
		\draw[line width =1pt] (0.4,0.6)--(0.6,0.4);
		\filldraw[fill=black,line width =0.8pt] (0.2 ,0.2) circle (0.07);
		\filldraw[fill=white,line width =0.8pt] (0.2 ,0.8) circle (0.07);
		\node [right]  at(1,1) {$i$};
		\node [right]  at(1,0) {$j$};
	\end{tikzpicture}
} =q_{\epsilon}^{\frac{1}{n}-\delta_{i,\bar{j}}}\raisebox{-.20in}{
	\begin{tikzpicture}
		\tikzset{->-/.style=			
			{decoration={markings,mark=at position #1 with					
					{\arrow{latex}}},postaction={decorate}}}
		\filldraw[draw=white,fill=gray!20] (-0,-0.2) rectangle (1, 1.2);
		\draw [line width =1.5pt,decoration={markings, mark=at position 1 with {\arrow{>}}},postaction={decorate}](1,1.2)--(1,-0.2);
		\draw [line width =1pt](0,0.8)--(1,0.8);
		\draw [line width =1pt](0,0.2)--(1,0.2);
		\filldraw[fill=white,line width =0.8pt] (0.2 ,0.8) circle (0.07);
		\filldraw[fill=black,line width =0.8pt] (0.2 ,0.2) circle (0.07);
		\node [right]  at(1,0.8) {$j$};
		\node [right]  at(1,0.2) {$i$};
	\end{tikzpicture}
}\;.
\end{align}

From left to  right in equation \eqref{root}, the first equality comes from relation \eqref{w.cross}, the second equality comes from relation \eqref{wzh.eight}, and the third equality comes from Proposition 3.2 in \cite{wang2023stated}.
Note that $q_{\epsilon}^{-\frac{1}{n}+\delta_{i,j}},\, q_{\epsilon}^{\frac{1}{n}-\delta_{i,\bar{j}}} =\pm 1$. 
If we set $a_{i,j} = q_{\epsilon}^{-\frac{1}{n}+\delta_{i,j}},\, b_{i,j} = q_{\epsilon}^{\frac{1}{n}-\delta_{i,\bar{j}}}$, then we have
\begin{align}\label{very}
a_{i,j}b_{i,\bar{j}} = 1.
\end{align} 

Let \( \MN \) be a connected marked 3-manifold. The following theorem shows that the \( R \)-linear structure of \( S_n(\M, \epsilon) \) depends only on \( M \) and the number of components of \( \mathcal{N} \), but is independent of the specific locations of these components in \( \partial M \).

Recall that
$D$ denotes the  2-dimensional closed disk.

\begin{theorem}\label{t6.1}
	Let $\MN$ be a connected marked 3-manifold. Assume $(M,\cN_1)$ (resp. $(M,\mathcal{N}_2)$) is obtained from $\MN$ by adding one extra marking $e_1$ (resp. $e_2$), and $l_1: S_n(M,\cN,\epsilon)\rightarrow S_n(M,\cN_1,\epsilon)$ (resp. $l_2: S_n(M,\cN,\epsilon)\rightarrow S_n(M,\cN_2,\epsilon)$) is the $R$-linear map induced by the embedding from $\MN$ to $(M,\cN_1)$ (resp. $(M,\cN_2)$). Then there exists an $R$-linear isomorphism $F: S_n(M,\cN_1,\epsilon)\rightarrow S_n(M,\cN_2,\epsilon)$ such that $F\circ l_1 =l_2$.
\end{theorem}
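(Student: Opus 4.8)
The plan is to construct $F$ geometrically, by \emph{relocating} the endpoints that lie on the extra marking from $e_1$ to $e_2$ along a fixed arc of $M$, and then to show that at a fourth root of unity $\epsilon$ this relocation is independent of all the choices involved, so that it descends to a well-defined $R$-linear isomorphism. Since $M$ is connected, I would first fix an embedded oriented arc $\gamma$ in $M$ running from a point of $e_1$ to a point of $e_2$ with interior in the interior of $M$, together with a framed tubular neighborhood $U\cong [0,1]\times D^2$ meeting $\partial M$ in half-disk neighborhoods of (the relevant portions of) $e_1$ and $e_2$.

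Given a stated $n$-web $\alpha$ representing a class in $S_n(M,\cN_1,\epsilon)$, I would isotope it so that all of its endpoints lying on $e_1$ are gathered into the end $\{0\}\times D^2$ of $U$, linearly ordered by their heights on $e_1$. I then define $F(\alpha)$ to be the web obtained by pushing each of these strands through $U$ so that it terminates on $e_2$, preserving the ordering and the states, and leaving $\alpha$ unchanged outside $U$; the endpoints on $\cN$ are left untouched. This produces a stated $n$-web in $(M,\cN_2)$, and $F$ is defined on the free module by $R$-linear extension.

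The core of the argument is \textbf{well-definedness}, and this is exactly where the hypothesis $\epsilon^4=1$ enters. I must check three things. (i) The defining relations \eqref{w.cross}--\eqref{wzh.eight} are preserved: relations supported away from $e_1$ are transported verbatim, while the relations that are local to a marking (the height-exchange and boundary relations) are carried to the identical relations at the marking $e_2$, since $e_2$ has the same local structure. (ii) The relocation does not depend on the order in which endpoints are gathered into $U$ nor on the gathering isotopy: reordering endpoints past a marking costs the signs $a_{i,j},b_{i,j}$ of equation \eqref{root}, the same reorderings occur identically at both ends of $U$, and by \eqref{very} (the identity $a_{i,j}b_{i,\bar j}=1$) these contributions cancel. (iii) The relocation is independent of the chosen arc $\gamma$ and framing: any two choices differ by braiding the transported strands among themselves and by framing twists inside $M$, and at $\epsilon$ a crossing equals its mirror by the first equality of \eqref{root} (equivalently $q_\epsilon^{1/n}=q_\epsilon^{-1/n}=\pm1$ annihilates relation \eqref{w.cross}), while the residual framing signs again cancel via \eqref{very}. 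Establishing (iii) cleanly is the main obstacle, and I expect to prove it by reducing to a local model in a ball where two routings differ by a single crossing or a single kink, and invoking these simplified relations.

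Finally, reversing the construction along $\gamma$ traversed from $e_2$ to $e_1$ gives an $R$-linear map $F'$; because crossings are trivial at $\epsilon$, the composite braiding produced by $F'\circ F$ and $F\circ F'$ is undone, so $F'=F^{-1}$ and $F$ is an isomorphism. Compatibility with the adding-marking maps is then immediate: for $\beta\in S_n(M,\cN,\epsilon)$ the class $l_1(\beta)$ is represented by a web with no endpoints on $e_1$, so $F$ fixes it and $F(l_1(\beta))$ is precisely $l_2(\beta)$; hence $F\circ l_1=l_2$, as required.
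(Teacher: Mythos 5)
Your geometric idea---transporting the endpoints on $e_1$ to $e_2$ through a tube along an arc---is the same as the paper's, but your implementation leaves the decisive step unproven. Defining $F$ on individual representatives (gather the endpoints, push through $U$, extend $R$-linearly) requires showing that the result is independent of the gathering isotopy and compatible with all defining relations simultaneously, and this is exactly the point you defer: your (ii) and (iii) are stated as expectations, with the reduction to local models left as the ``main obstacle.'' The paper's proof is organized precisely so that no such by-hand verification is needed: it cuts $M$ along the wall $c\times\partial D$ of the tube, invokes Theorem \ref{TQFT} to present $S_n(M,\cN_1,\epsilon)$ and $S_n(M,\cN_2,\epsilon)$ as quotients of $S_n(M',\cN_1',\epsilon)$ and $S_n(M',\cN_2',\epsilon)$ by explicitly described kernels (the sliding relations of Figure \ref{relation}), and realizes the transport as an honest isomorphism $f$ of marked $3$-manifolds $(M',\cN_1')\to(M',\cN_2')$ dragging $e_1'$ to $e_2'$ along the tube wall. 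Functoriality of stated skein modules then makes $f_*$ well defined with no choices, and the only computation left is $f_*(\mathrm{Ker}(h_1))=\mathrm{Ker}(h_2)$, carried out with \eqref{root} and \eqref{very}. In your scheme, the entire content of the theorem sits inside the unverified well-definedness claim.

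There is also a concrete error in your point (iii). At $\epsilon^4=1$ crossing changes are indeed free (the first equality of \eqref{root}), but kinks are not: by relation \eqref{w.twist} the twist coefficient is $t_\epsilon=(-1)^{n-1}$ when $\epsilon^2=1$ (it equals $1$ only when $\epsilon^2=-1$), so for even $n$ and $\epsilon=\pm1$ a framing twist contributes a genuine sign. Equation \eqref{very} is the identity $a_{i,j}b_{i,\bar j}=1$ for the \emph{boundary reordering} coefficients of \eqref{root}---it governs strands exchanging heights at a marking and says nothing about kinks---so ``residual framing signs cancel via \eqref{very}'' is unsupported, and any completion of your argument must actually track the framings of the relocated strands. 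Note finally that the theorem only asserts the \emph{existence} of an isomorphism $F$ with $F\circ l_1=l_2$, so independence of the arc $\gamma$ is not required at all: you may fix $\gamma$ and the framed tube once and for all, which removes most of your (iii) but leaves (ii) and compatibility with the relations as the real burden---precisely what the paper's presentation via Theorem \ref{TQFT} supplies for free.
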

\begin{proof}
 For each $i=1,2$, the oriented closed interval $\overline{e_i}$ is an embedding from $[0,1]$ to $\partial M$. Let $c:[0,1]\rightarrow M$ be a proper smooth embedding such that $c(0) =\overline{e_1}(1)$ and $c(1) = \overline{e_2}(0)$. Let $U(c)$ be a closed regular neighborhood of $c$, that is $U(c)$ is diffeomorphic to $c\times D$, such that $U(c)\cap \partial M = D\times \{c(0),c(1)\}$. We also require that $U(c)\cap \overline{\cN_1} = U(c)\cap\overline{e_1}\neq \overline{e_1}$ and $U(c)\cap \overline{\cN_2} = U(c)\cap\overline{e_2}\neq \overline{e_2}$ are  closed intervals. Then $c\times \partial D$ is a strict subsurface for both $(M,\cN_1)$ and $(M,\cN_2).$
 
  For each $i=1,2$,
 define $e_i' = e_i\setminus U(c)$, and the orientation of $e_i'$ is inherited from $e_i$.
 Let $M' = M\setminus (c\times \text{int}(D))$, $\cN_1' = \cN_1\setminus U(c) = (\cN_1\setminus e_1)\cup e_1'$,  $\cN_2' = \cN_2\setminus U(c)=(\cN_2\setminus e_2)\cup e_2'$, and $h_1:S_n(M',\cN_1',\epsilon)\rightarrow S_n(M,\cN_1,\epsilon)$ (resp. $h_2:S_n(M',\cN_2',\epsilon)\rightarrow S_n(M,\cN_2,\epsilon)$) be the $R$-linear map induced by the embedding from 
 $(M',\cN_1')$ to $(M,\cN_1)$ (resp. from $(M',\cN_2')$ to $(M,\cN_2)$). 
 Theorem \ref{TQFT} implies that $h_1$ (resp. $h_2$) is surjective and $\text{Ker}(h_1)$ (resp. $\text{Ker}(h_2)$) is generated by the relation depicted in the left (resp. right) picture in Figure \ref{relation}.

 
 \begin{figure}[h]  
 	\centering\includegraphics[width=14cm]{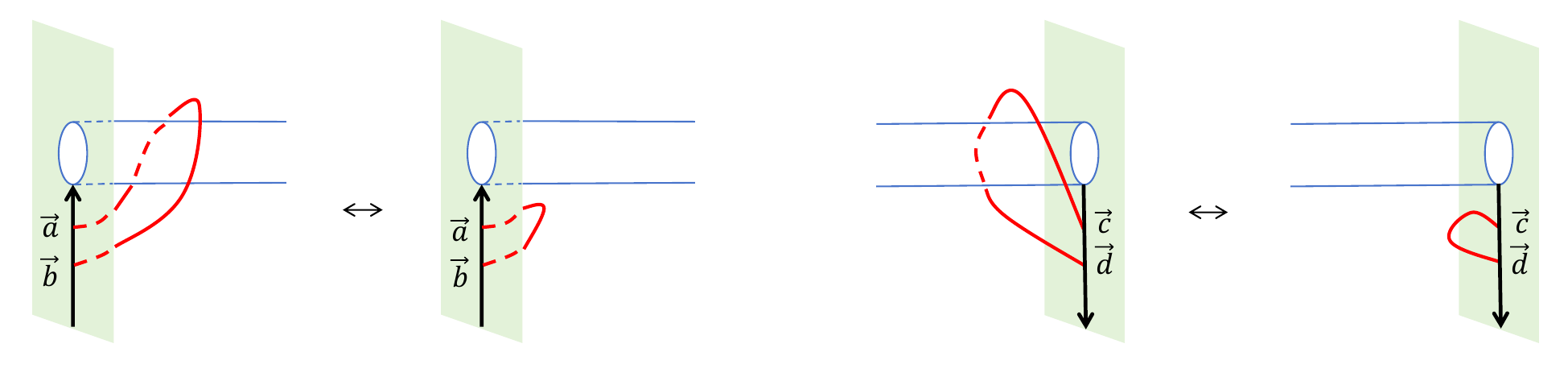} 
 	\caption{The faint green portions belong to $\partial M'$, and the blue sections pertain to $c\times \partial D$. The red curves have arbitrary orientations  and represent parallel copies of stated arcs. The states for these parallel copies of arcs are illustrated by vectors $\vec{a},\vec{b},\vec{c},\vec{d}$ whose entries are integers between $1$ and $n$. In the same relation, the left-hand side and the right-hand side are compatible with each other regarding the orientations  of the red curves.
 		 The black arrow in the left (resp. right) picture  is a part of $e_1'$ (resp. $e_2'$). 		 
 		 The left (resp. right) picture is intended for relations that generate $\text{Ker}(h_1)$ (resp. $\text{Ker}(h_2)$).}       
 	\label{relation}   
 \end{figure}
 
 There is a copy of $c$ in $c\times \partial D$, denoted as $c'$, such that 
 $c'(0) = \overline{e_1'}(1)$ and $c'(1) = \overline{e_2'}(0)$. Let $V(c)\in M'$ be a small open regular neighborhood of $e_1'\cup e_2'\cup c'$ (here the ambient 3-manifold is $M'$) such that $V(c)$ retracts to $e_1'\cup e_2'\cup c'$ and $V(c)\cap \overline{ \cN} =\emptyset$. Let $f$ be the isomorphism from $ (M',\cN_1')$ to $ (M',\cN_2')$ that drags $e_1'$ to $e_2'$ along $c'$ such that $f$ is identity on $M'\setminus V(c)$. The induced $R$-linear map $f_* :S_n(M',\cN_1',\epsilon)\rightarrow S_n (M',\cN_2',\epsilon)$ is illustrated in Figure \ref{map}.
 
  \begin{figure}[h]  
 	\centering\includegraphics[width=14cm]{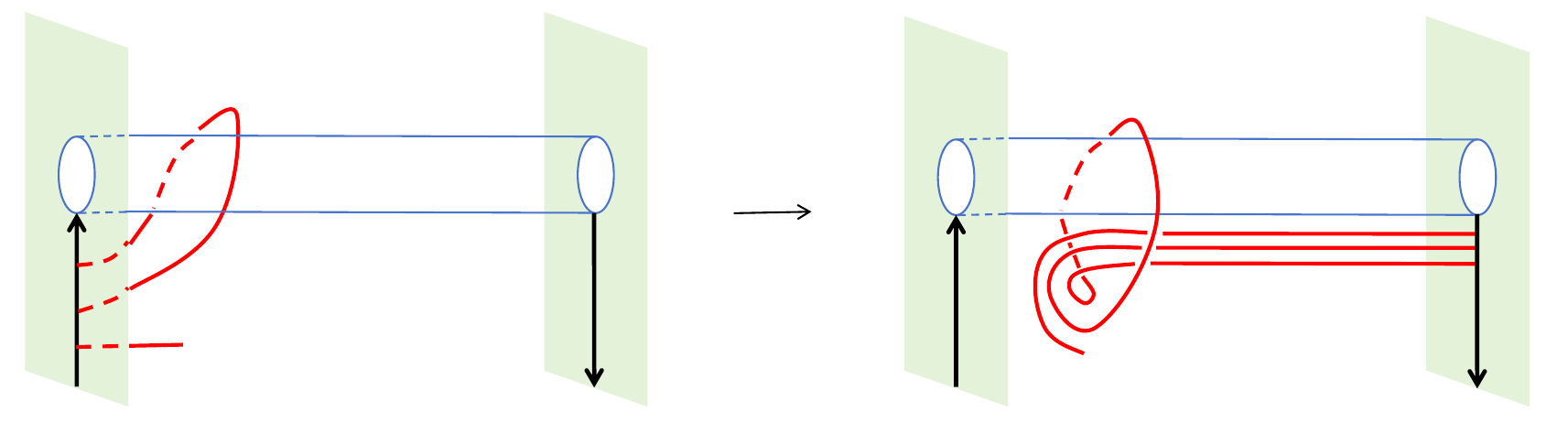} 
 	\caption{The picture illustrates the $R$-linear map $f_*$.}       
 	\label{map}   
 \end{figure}
 
 Using relations in \eqref{root} and equation \eqref{very}, we can easily show $f_{*}(\text{Ker}(h_1)) = \text{Ker}(h_2)$. Similarly, we have $(f^{-1})_{*}(\text{Ker}(h_2)) = \text{Ker}(h_1)$. Then $f_{*}$ (resp. $(f^{-1})_*$) induces an $R$-linear map $F:S_n(M,\cN_1,\epsilon)\rightarrow S_n(M,\cN_2,\epsilon)$ (resp. $G:S_n(M,\cN_2,\epsilon)\rightarrow S_n(M,\cN_1,\epsilon)$). Obviously, the $R$-linear maps $F$ and $G$ are inverse to each other.
 
 From the definition of $F$, it is easy to show $F\circ l_1 = l_2$.
 
\end{proof}

\begin{rem}
	The author proved Theorem \ref{t6.1} for  general quantum parameter (that is, $\epsilon$ is replaced with general invertible  element $v\in R$) when $e_1,e_2$ lie in the same  component of $\partial M$, please refer to Lemma 6.4 in \cite{wang2023stated}.
	
	Note that Theorem \ref{t6.1} is not ture for  general quantum parameter when $e_1,e_2$ lie in different  components of $\partial M$, please refer to \cite{CL2022TQFT} for counterexamples.
\end{rem}

The following Theorem is the main result of this section.

\begin{theorem}\label{inj}
  Let \( \MN \) be a connected marked 3-manifold with \( \mathcal{N} \neq \emptyset \), and let \( (M, \mathcal{N}') \) be another marked 3-manifold obtained from \( \MN \) by adding an extra marking \( e' \).  
Let \( l_{e'} \) denote the \( R \)-linear map  
  \[
  l_{e'}: S_n(\M, \epsilon) \to S_n(M, \mathcal{N}', \epsilon)
  \]  
  induced by the embedding from \( \MN \) into \( (M, \mathcal{N}') \). Then:  

  \begin{enumerate}
      \item The \( R \)-linear map \( l_{e'} \) is injective.  
      \item There is an isomorphism  
      \[
      S_n(M, \mathcal{N}', \epsilon) \simeq S_n(\M, \epsilon) \otimes_R O_{q_{\epsilon}}(SL_n).
      \]
  \end{enumerate}
\end{theorem}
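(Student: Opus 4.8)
The plan is to establish the isomorphism in part~(2) first and then read off the injectivity in part~(1). The starting point is Theorem~\ref{t6.1}: up to a canonical $R$-linear isomorphism, neither $S_n(M,\cN',\epsilon)$ nor the map $l_{e'}$ depends on where the extra marking $e'$ is located. Since $\cN\neq\emptyset$ I may therefore fix a marking $e\subset\cN$ and slide $e'$ so that it is parallel and very close to $e$. In this position $e$ and $e'$ are the two edge-markings of a thickened bigon $B$, realized as a half-ball in $M$ whose flat face lies in $\partial M$ and whose round face is a disk $D$ properly embedded in $\operatorname{int}(M)$; cutting $M$ along $D$ exhibits it as $M=M_1\cup_D B$ with $M_1\cong M$. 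Because $O_{q_\epsilon}(SL_n)\simeq S_n(\text{bigon})$ (subsection~\ref{bigon}), the identification $B\cong(\text{bigon})\times[-1,1]$ records the part of any web living inside $B$ as an element of $O_{q_\epsilon}(SL_n)$.

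Next I would compute $S_n(M,\cN',\epsilon)$ by the same strict-subsurface technology used to prove Theorem~\ref{t6.1}. Taking the cylinder wall around a short arc joining $e$ to $e'$ as a strict subsurface $\Sigma$ and applying Theorem~\ref{TQFT}, one presents $S_n(M,\cN',\epsilon)$ as the zeroth Hochschild homology of the $S_n(\P,\epsilon)$-bimodule $S_n(\mathsf{Sl}_\Sigma(M,\cN'),\epsilon)$, where the cut-open manifold is a disjoint union of a copy of $(M,\cN)$ (after invoking Theorem~\ref{t6.1} once more to normalize its marking) and a ball whose marking data is exactly that of the bigon. The object is then to show that this Hochschild homology collapses to the \emph{free} tensor product $S_n(M,\cN,\epsilon)\otimes_R O_{q_\epsilon}(SL_n)$.

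This is where the hypothesis $\epsilon^4=1$ is indispensable. By~\eqref{root} positive and negative crossings coincide and pushing a strand across the wall $D$ merely permutes its states by the scalars $a_{i,j},b_{i,j}$, while the compatibility $a_{i,j}b_{i,\bar j}=1$ of~\eqref{very} guarantees that the two ways of moving a strand agree. These are precisely the relations that control $\operatorname{Ker}(h_1),\operatorname{Ker}(h_2)$ in the proof of Theorem~\ref{t6.1}, and the same bookkeeping shows here that the coinvariants impose no relation coupling the $M_1$-part of a web to its bigon-part: the bigon contributes a factor of $O_{q_\epsilon}(SL_n)$ that splits off freely over $R$. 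Combining this with the identification of the complementary piece with $S_n(M,\cN,\epsilon)$ yields the isomorphism of part~(2).

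Finally, part~(1) is formal. Under the isomorphism of part~(2) the adding-marking map $l_{e'}$ is identified with $\mathrm{id}\otimes u$, where $u\colon R\to O_{q_\epsilon}(SL_n)$ sends $1$ to the empty web; since $O_{q_\epsilon}(SL_n)$ is a free $R$-module having $1$ as a basis element, $\mathrm{id}\otimes u$ is a split monomorphism, whence $l_{e'}$ is injective. I expect the genuine obstacle to be the freeness claim of the previous paragraph --- that the gluing of the bigon $B$ back along the edge $e$ does not secretly entangle the bigon degrees of freedom with $S_n(M,\cN,\epsilon)$. Surjectivity onto the tensor product is a direct isotopy-and-resolution argument, and the reduction via Theorem~\ref{t6.1} as well as the deduction of~(1) are routine; the crux is certifying, through the Hochschild-homology description of Theorem~\ref{TQFT} together with the $\epsilon^4=1$ simplifications~\eqref{root}--\eqref{very}, that the $O_{q_\epsilon}(SL_n)$-factor is free.
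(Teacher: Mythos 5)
Your outer reduction is sound and matches the paper's Case 2: use Theorem \ref{t6.1} to relocate the extra marking $e'$ (this is exactly how the paper handles the case where $e'$ lies in a different boundary component from every marking of $\mathcal{N}$, and it is the only place $\epsilon^4=1$ is genuinely new in this paper). But the core of the theorem --- the case where $e'$ sits in the same boundary component as some $e\subset\mathcal{N}$ --- is precisely what you have not proved. The paper disposes of it by citing Theorem 6.10 of \cite{wang2023stated}, which holds there for a general quantum parameter; you instead attempt to rederive it from Theorem \ref{TQFT}, and this is where your argument breaks. First, your geometric setup is inconsistent: the round face $D$ of a half-ball containing both $e$ and $e'$ is disjoint from $\mathcal{N}'$, so it is not a strict subsurface (every component of a strict subsurface must meet $\mathcal{N}$, and the associated marked surface must carry at least one marked point), and Theorem \ref{TQFT} does not apply to it. Second, cutting along the cylinder wall $c\times\partial D$ around a short arc joining $e$ to $e'$, as in the proof of Theorem \ref{t6.1}, does \emph{not} produce a disjoint union of a copy of $(M,\mathcal{N})$ and a marked ball: drilling the boundary-parallel tunnel yields a \emph{connected} manifold homeomorphic to $M$, with both half-markings $e_1',e_2'$ on it. So the decomposition on which your Hochschild-homology computation rests does not exist in either form you describe.

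Even granting a correct decomposition, the decisive step --- that $\text{HH}_0$ of the bimodule collapses to the \emph{free} product $S_n(M,\mathcal{N},\epsilon)\otimes_R O_{q_\epsilon}(SL_n)$ --- is exactly the content of the cited Theorem 6.10, and you explicitly defer it (``I expect the genuine obstacle to be the freeness claim''). The relations \eqref{root} and \eqref{very} are used in the paper only to show $f_*(\operatorname{Ker}(h_1))=\operatorname{Ker}(h_2)$ when dragging a marking, i.e.\ to prove Theorem \ref{t6.1}; they do not by themselves show that the coinvariants impose no relations coupling the two factors. Finally, your deduction of (1) from (2) via $l_{e'}=\mathrm{id}\otimes u$ presupposes that the isomorphism in (2) is compatible with the adding-marking map; an abstract isomorphism does not suffice. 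The paper secures exactly this compatibility through the identity $F\circ l_1=l_2$ in Theorem \ref{t6.1} (and, in the same-component case, through the cited result), so you would need to build and verify the analogous naturality in your construction. In sum: the reduction and the formal endgame are fine, but the central same-component case is unproved and the proposed route to it is geometrically flawed.
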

\begin{proof}
	Since $\cN$ is not empty, we suppose $e$ is a component of $\cN$.
	
	Case 1: $e$ and $e'$ lie in a same component of $\partial M$. Theorem 6.10 in \cite{wang2023stated} implies this case. 
	
	Case 2: $e$ and $e'$  lie in different components of $\partial M$. Assume that $(M,\cN'')$ is obtained from $\MN$ by adding one extra marking $e''$ with $e''$ and $e$ lying in a same component of $\partial M$. We use $l_{e''}$ to denote the $R$-linear map from $S_n(\M,\epsilon)\rightarrow S_n(M,\cN'',\epsilon)$ induced by the embedding from $\MN$ to $(M,\cN'')$. From Case 1, we know that $l_{e''}$ is injective and $S_n(M,\cN'',\epsilon)\simeq S_n(\M,\epsilon)\otimes_{R} O_{q_{\epsilon}}(SL_n)$. Then Theorem \ref{t6.1} implies $\text{Ker}(l_{e'})=\text{Ker}(l_{e''}) = 0$ and $S_n(M,\cN',\epsilon)\simeq S_n(M,\cN'',\epsilon)\simeq S_n(\M,\epsilon)\otimes_{R} O_{q_{\epsilon}}(SL_n)$. 
\end{proof}

\begin{rem}
	The author proved Theorem \ref{inj} for $\epsilon=1$ using a different and more complicated technique in \cite{wang2023stated}.  
\end{rem}

\section{Applications: Injectivity of the splitting map in special cases}


L{\^e} and Sikora formuated the following Conjecture regarding the injectivity of the splitting map.

\begin{conjecture}[Conjecture 7.12 in \cite{le2021stated}]\label{conj}
	For any pb surface $\Sigma$ and any ideal arc $c$, the
	splitting map $\Theta_c:S_n(\Sigma,v)\rightarrow S_n(\text{Cut}_c(\Sigma),v)$ is injective.
\end{conjecture}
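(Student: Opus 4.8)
The plan is to reduce the conjecture to its genuinely hard case and then to indicate why that case lies beyond the techniques assembled in this paper. The first reduction is to a connected surface: an ideal arc $c$ joins two ideal points of a single connected component $\Sigma_0$ of $\Sigma$, and cutting along $c$ leaves the remaining components untouched, so $\Theta_c$ on $S_n(\Sigma,v)$ is the tensor product over $R$ of $\Theta_c$ on $S_n(\Sigma_0,v)$ with the identity on the passive factors. Granting that the passive factors behave well under $\otimes_R$, it suffices to prove injectivity for connected $\Sigma_0$. I would then split on whether $\Sigma_0$ has boundary. If $\partial\Sigma_0\neq\emptyset$, injectivity is already known by the theorem of L\^e and Sikora for connected pb surfaces with non-empty boundary \cite{le2021stated}, whose proof leverages the non-empty boundary (via an ideal triangulation and the associated quantum trace into a quantum torus, a domain, in which cutting becomes an honest inclusion). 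Thus the entire content of the conjecture concentrates in the case $\partial\Sigma_0=\emptyset$: a closed surface carrying only interior punctures, for which $\mathcal{N}=\emptyset$ and $S_n(\Sigma_0,v)=S_n(\Sigma_0\mid v^2)$.

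For this closed case the strategy realized in this paper is to transport injectivity from the single parameter value at which it is already known, namely $v=1$ \cite{wang2023stated}, to other special values. The rescaling isomorphisms $\varphi_\epsilon$ of Theorem \ref{c_iso} intertwine the splitting maps $S_n(\Sigma_0,v)\to S_n(\text{Cut}_c\Sigma_0,v)$ and $S_n(\Sigma_0,\epsilon v)\to S_n(\text{Cut}_c\Sigma_0,\epsilon v)$ up to the bookkeeping of the states and heights of the new endpoints created by the cut, and therefore carry injectivity at $v$ to injectivity at $\epsilon v$ whenever $\epsilon^{2m}=1$ and $m\mid n$; specializing to the orbit of $v=1$ yields the root-of-unity cases. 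Independently, the spin-structure isomorphism $\Phi_n$ of Theorem \ref{2} identifies $S_n(M,1)$ with $S_n(M,-1)$ compatibly with cutting, upgrading the $v=1$ result to parameter $-1$. What makes both mechanisms work is that the isomorphisms $\varphi_\epsilon$ and $\Phi_n$ exist only because the relevant scalars ($\epsilon^{2m}=1$, resp. the $v\mapsto -v$ symmetry) collapse the defining relations up to signs; there is no analogous isomorphism relating a generic $v$ to $v=1$.

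Consequently the hard part is to prove injectivity of the first cut of a closed punctured surface at a generic parameter, with no parameter symmetry available to import the answer from $v=1$. The two routes I would attempt are: \emph{(i)} to establish an injective $SL_n$ quantum trace for closed punctured surfaces that is compatible with $\Theta_c$, so that injectivity of the latter follows as in the boundary case; and \emph{(ii)} a specialization argument, viewing $S_n(\Sigma_0\mid v^2)$ as a family over the parameter and trying to propagate the vanishing of $\ker\Theta_c$ from the dense set of roots of unity to a generic value. Route \emph{(i)} is obstructed by the fact that, unlike the bordered case, no faithful quantum-torus embedding is presently available in this generality; the traces that exist need not be injective, which is exactly why L\^e--Sikora's argument stops at non-empty boundary. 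Route \emph{(ii)} is obstructed by non-flatness of skein modules in the parameter: the fibers at roots of unity tend to be \emph{larger} than the generic fiber, so injectivity on a dense set of special parameters does not descend to the generic parameter by semicontinuity. The central difficulty of the conjecture at arbitrary $v$ is therefore to control the $q_v$-deformed height-exchange relation \eqref{wzh.eight} together with the defining relations of $O_{q_v}(SL_n)$ intrinsically on a boundaryless surface --- precisely the point at which every known method, including those developed here, stops short of the full statement.
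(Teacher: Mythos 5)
You have correctly recognized that this statement is a \emph{conjecture}: the paper does not prove it, and neither do you. There is therefore no ``paper proof'' to match your argument against; what the paper actually establishes is the special case where the quantum parameter satisfies $\epsilon^{2m}=1$ with $m\mid n$ (plus the $v=\pm 1$ and $\epsilon^4=1$ statements for marked $3$-manifolds), and your proposal accurately reconstructs that state of affairs: reduction to connected $\Sigma$, the L\^e--Sikora result for connected surfaces with non-empty boundary, transport of the $v=1$ case of \cite{wang2023stated} to roots of unity via $\varphi_\epsilon$, and a candid account of why neither a quantum-trace argument nor a specialization argument closes the generic case. As a survey of the problem this is sound.

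One step in your reconstruction is, however, not what the paper does and would not work as stated. You claim that $\varphi_\epsilon$ ``intertwines the splitting maps \ldots up to the bookkeeping of the states and heights of the new endpoints,'' i.e.\ that $\varphi_\epsilon$ commutes with $\Theta_c$ for an arbitrary ideal arc $c$. This is delicate: the lifts $\alpha(h,s)$ acquire $2|\alpha\cap c|$ new endpoints, so the scalar $\epsilon^{2w(\alpha)+e(\alpha)/2}$ changes by $\epsilon^{|\alpha\cap c|}$ across the cut, and the geometric intersection number $|\alpha\cap c|$ is not an invariant of the skein class (isotoping $\alpha$ across $c$ changes it by $2$, and $\epsilon^{2}$ may equal $-1$). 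Moreover the lifts need not be negatively ordered near the cut, so applying the defining formula of $\varphi_\epsilon$ to them term-by-term is not legitimate before re-ordering heights via relation \eqref{wzh.eight}, which introduces state-dependent correction terms. The paper sidesteps exactly this: in Lemma \ref{key} it commutes $\varphi_\epsilon$ only with the map $\Theta_p$ associated to a trivial arc $c_p$ bounding a monogon --- there every diagram can be isotoped off the monogon, so $\Theta_p(\alpha)=\alpha$ and diagram \eqref{com} commutes on the nose --- and then invokes Lemma \ref{lll} (Corollary 8.2 of \cite{le2021stated}), $\operatorname{Ker}(\Theta_p)=\operatorname{Ker}(\Theta_c)$, to convert injectivity of $\Theta_p$ into injectivity of $\Theta_c$. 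If you want your root-of-unity transport step to be rigorous, you should route it through $\Theta_p$ and Lemma \ref{lll} in the same way, rather than asserting a direct intertwining with $\Theta_c$.

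A second, more minor misattribution: the parameter $-1$ results for marked $3$-manifolds are obtained from the stated isomorphism $\Psi_n$ of Theorem \ref{5.1} together with Corollary \ref{minus} (compatibility with the adding-marking map) and Theorem 6.7 of \cite{wang2023stated}, not from the spin-structure isomorphism $\Phi_n$ of Theorem \ref{2}, which lives on $SL_n$-skein modules of unmarked $3$-manifolds ($S_n(M,v^2)\to S_n(M,-v^2)$) and is not asserted by the paper to be compatible with splitting along a disk. With these two corrections your account of what is known, how it is proved, and where the genuine open difficulty lies agrees with the paper.
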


In this section, we recall and utilize some results from \cite{wang2023stated}. Although the author worked with \( \mathbb{C} \) in \cite{wang2023stated} instead of a general commutative domain, it is straightforward to verify that all referenced results remain valid for a general commutative domain \( R \), as the proving techniques in \cite{wang2023stated} apply in this broader setting.  

L{\^e} proved Conjecture \ref{conj} for \( n=2 \) \cite{le2018triangular}, while Higgins established the case \( n=3 \) \cite{higgins2020triangular}. L{\^e} and Sikora further proved Conjecture \ref{conj} when \( \Sigma \) is connected and has a non-empty boundary. The author demonstrated that the splitting map is injective for all marked 3-manifolds when the quantum parameter \( v = 1 \) \cite{wang2023stated}.  

Let \( \MN \) be a marked 3-manifold, let \( D \) be a properly embedded disk in \( M \), and let \( e \) be an open oriented interval in \( D \). The author proved that \( \Theta_{(D,e)} \) is injective if the component of \( \partial M \) containing \( \partial D \) has at least one marking (Corollary 6.11 in \cite{wang2023stated}).  

Using the results from the previous sections, we will prove Conjecture \ref{conj} for \( v^{2m} = 1 \) when \( m \) divides \( n \). Additionally, we will establish that any splitting map for the marked 3-manifold \( \MN \) is injective when every component of \( M \) contains at least one marking and \( v^4 = 1 \). Finally, we will show that the splitting map is injective for any marked 3-manifold and any splitting disk when \( v = -1 \).

We call the pb surface, obtained from $D$ by removing one ideal point in $\partial D$, as {\bf monogon}, denoted as $D_1$. Then it is easy to show $S_n(D_1,v) \simeq R$ as $R$-algebras.

Let $\Sigma$ be a pb surface, and $p$ be an ideal point of $\Sigma$. Suppose $c_p$ is a trivial ideal arc at $p$, that is, the two endpoints of $c_p$ are both $p$ and $c_p$ bounds an embedded monogon. Then $\text{Cut}_{c_p}(\Sigma)$ is the disjoint union of a monogon and $\Sigma_p$.  We use $\Theta_p$ to denote the following composition:
$$S_n(\Sigma,v)\rightarrow S_n(\Sigma_p,v)\otimes_{R} S_n(D_1,v)\simeq S_n(\Sigma_p,v),$$
where the map from $S_n(\Sigma,v)$ to $S_n(\Sigma_p,v)\otimes_{R} S_n(D_1,v)$ is the splitting map $\Theta_{c_p}$. For any stated $n$-web diagram $\alpha$ in $\Sigma$, we can isotope $\alpha$ such that there is no intersection between $\alpha$ and the monogon bounded by $c_p$. Then we have $\Theta_p(\alpha) = \alpha$. Actually this is another way to define $\Theta_p$.

\begin{lemma}[Corollary 8.2 in \cite{le2021stated}]\label{lll}
	Let $\Sigma$ be a pb surface with an ideal point $p$ and an  ideal arc $c$. We have $\text{Ker}(\Theta_{p}) = \text{Ker}(\Theta_c)$.
\end{lemma}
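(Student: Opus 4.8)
The plan is to prove the two inclusions $\text{Ker}(\Theta_{p})\subseteq \text{Ker}(\Theta_{c})$ and $\text{Ker}(\Theta_{c})\subseteq \text{Ker}(\Theta_{p})$ separately, using only the functoriality of the splitting map under cutting along \emph{disjoint} ideal arcs together with the triviality $S_n(D_1,v)\simeq R$, which lets us discard any monogon factor. The guiding principle is that cutting $\Sigma$ along $c$ and collapsing a monogon at $p$ are performed along disjoint loci, so the two operations commute; the real content is to upgrade this commutation into a comparison of the two kernels, and this general-$v$ structural statement is exactly Corollary 8.2 of \cite{le2021stated}.

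First I would fix a trivial ideal arc $c_p$ at $p$ bounding a monogon $U$ chosen so small that $U\cap c=\emptyset$; even when $p$ is an endpoint of $c$ this is possible, with $c$ and $c_p$ meeting only at the removed ideal point $p$. Since $c$ and $c_p$ are disjoint, $\text{Cut}_c\Sigma$ still contains the monogon $U$, bounding the image $\tilde c_p$ of $c_p$ at an ideal point $p'$, while $\Sigma_p$ still contains the arc $c$; moreover $(\text{Cut}_c\Sigma)_{p'}\simeq \text{Cut}_c(\Sigma_p)$. Writing $\Theta_{p'}$ for the monogon-splitting at $p'$ inside $\text{Cut}_c\Sigma$ and $\Theta^{\Sigma_p}_{c}$ for the splitting along $c$ inside $\Sigma_p$, I would evaluate both composites on a diagram $\alpha$ isotoped off $U$ and made transverse to $c$. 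Using the identities $\Theta_p(\alpha)=\alpha$, $\Theta_{p'}(\beta)=\beta$, and $\Theta_c(\alpha)=\sum_{s}\alpha(h,s)$, both sides collapse to $\sum_{s}\alpha(h,s)$ read in $S_n(\text{Cut}_c(\Sigma_p),v)$ after discarding the $S_n(D_1,v)\simeq R$ factor, yielding the intertwining identity $\Theta_{p'}\circ\Theta_c=\Theta^{\Sigma_p}_{c}\circ\Theta_p$.

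From this identity one only obtains $\Theta_c(x)=0\Rightarrow \Theta_p(x)\in \text{Ker}(\Theta^{\Sigma_p}_{c})$ and $\Theta_p(x)=0\Rightarrow \Theta_c(x)\in \text{Ker}(\Theta_{p'})$, so neither inclusion is automatic: the main obstacle is precisely that the auxiliary maps $\Theta^{\Sigma_p}_{c}$ and $\Theta_{p'}$ could enlarge the kernel, and I expect this to be the crux of the whole argument. To remove it I would observe that each auxiliary map is again a puncture-splitting or an arc-splitting of the very same type, so the assertion for $\Sigma$ should follow from the same assertion applied to the cut surfaces; organizing this as a reduction that peels off one disjoint arc at a time, so that one always compares a single arc-cut against a single monogon-cut, is the natural way to close the loop, and it is this functoriality-and-reduction scheme that is carried out in \cite{le2021stated}. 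I emphasize that the target here is kernel \emph{equality}, not injectivity, so the reduction must never be pushed to the point of computing the kernels to be zero (which would be the full Conjecture \ref{conj}); rather, it is only used to transport $\text{Ker}(\Theta_c)$ onto $\text{Ker}(\Theta_p)$. Once Lemma \ref{lll} is available in this form, it serves its intended purpose in the applications: combined with the injectivity of the adding-marking map (to which $\Theta_p$ reduces via $S_n(D_1,v)\simeq R$) established in Theorem \ref{inj}, it forces $\text{Ker}(\Theta_c)=\text{Ker}(\Theta_p)=0$ whenever $v^4=1$.
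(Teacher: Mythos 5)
You have correctly identified where the difficulty lies, but your proposal does not actually contain a proof, and the step you sketch to close it would fail. First, note that the paper itself offers no proof of this lemma: it is imported wholesale as Corollary 8.2 of \cite{le2021stated}, so the only meaningful comparison is against that reference. Your commutation square is fine: with the monogon $U$ chosen disjoint from $c$, both composites $\Theta_{p'}\circ\Theta_c$ and $\Theta_c^{\Sigma_p}\circ\Theta_p$ send a diagram $\alpha$ (isotoped off $U$, transverse to $c$) to $\sum_s \alpha(h,s)$ in $S_n(\mathrm{Cut}_c(\Sigma_p),v)$, so the identity $\Theta_{p'}\circ\Theta_c=\Theta_c^{\Sigma_p}\circ\Theta_p$ holds. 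But, as you say yourself, this gives only $\Theta_c(x)\in\mathrm{Ker}(\Theta_{p'})$ when $\Theta_p(x)=0$, and $\Theta_p(x)\in\mathrm{Ker}(\Theta_c^{\Sigma_p})$ when $\Theta_c(x)=0$. The fix you propose --- apply ``the same assertion'' to the cut surfaces and peel off one arc at a time --- is circular: kernel equality on $\mathrm{Cut}_c\Sigma$ or on $\Sigma_p$ merely trades $\mathrm{Ker}(\Theta_{p'})$ (resp.\ $\mathrm{Ker}(\Theta_c^{\Sigma_p})$) for the kernel of yet another splitting map of the same kind; what the square actually demands is injectivity of the auxiliary map on the relevant image, and no amount of kernel-to-kernel transport supplies that. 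There is also no decreasing complexity parameter that would make the ``reduction'' terminate, and if such a scheme did close, it would equally well prove injectivity of every $\Theta_c$, i.e.\ Conjecture \ref{conj} itself --- a sign that it proves too much.

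The actual mechanism behind Corollary 8.2 in \cite{le2021stated} is of a different kind and is hinted at in the paper's remark following the lemma: one reinterprets $\Theta_p$ as the adding-marking map in the thickening, and the equality of its kernel with that of the splitting map is established by arguments specific to that setting (see Theorem 8.1 in \cite{le2021stated}, and Lemma 6.4 and Theorem 6.7 in \cite{wang2023stated} for the marked-3-manifold analogue); none of this is recoverable from the disjoint-cuts commutation alone. Your closing sentence also misstates how the lemma is used downstream: the paper's Lemma \ref{key} obtains injectivity of $\Theta_p$ for $\epsilon^{2m}=1$ with $m\mid n$ by conjugating with the isomorphisms $\varphi_\epsilon$ of Theorem \ref{c_iso} and invoking the $v=1$ case from \cite{wang2023stated}, not Theorem \ref{inj}; and Theorem \ref{inj} requires $\mathcal{N}\neq\emptyset$, so it cannot serve this purpose for pb surfaces with empty boundary, which are exactly the interesting cases. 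In short: your setup and your diagnosis of the crux are accurate, but the argument you offer to resolve it is circular, so as a standalone proof the proposal has a genuine gap --- it ultimately defers, as the paper does, to \cite{le2021stated}.
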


\begin{rem}
	If we analyze \( \Theta_p \) in the thickening of pb surfaces, it becomes evident that \( \Theta_p \) is, in fact, the adding marking map. Consequently, Lemma \ref{lll} asserts that the kernel of the adding marking map coincides with the kernel of the splitting map. The author established this result for general marked 3-manifolds. Specifically, the kernel of the adding marking map equals the kernel of the splitting map when the added marking and the boundary of the splitting disk belong to the same component of \( \partial M \) (which is always the case for the thickening of connected pb surfaces), see Theorem 6.7 in \cite{wang2023stated}.  

L{\^e} and Sikora further proved that \( \text{Ker}(\Theta_p) = \text{Ker}(\Theta_{p'}) \) for any two ideal points \( p, p' \) of a connected pb surface (Theorem 8.1 in \cite{le2021stated}). If we interpret \( \Theta_p \) and \( \Theta_{p'} \) as adding marking maps, the reason behind the equality \( \text{Ker}(\Theta_p) = \text{Ker}(\Theta_{p'}) \) becomes particularly transparent; see Lemma 6.4 in \cite{wang2023stated}. In that lemma, the author proved that the kernels of two adding marking maps are identical if the two added markings lie on the same boundary component of the marked 3-manifold.

\end{rem}

\begin{lemma}\label{key}
	Let $\Sigma$ be a pb surface with an ideal point $p$, let $m,n$ be two positive integers with $m\mid n$, and let $\epsilon\in R$ with $\epsilon^{2m} = 1$. Then $\Theta_{p}:S_n(\Sigma,\epsilon)\rightarrow S_n(\Sigma_p,\epsilon)$ is injective.
\end{lemma}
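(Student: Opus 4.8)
The plan is to transport the already-established injectivity of $\Theta_p$ at quantum parameter $1$ up to the parameter $\epsilon$, using the $R$-linear isomorphisms $\varphi_{\epsilon}$ furnished by Theorem \ref{c_iso}. First I would record the base case: by the remark following Lemma \ref{lll}, the map $\Theta_p$ is nothing but the adding marking map and satisfies $\text{Ker}(\Theta_p) = \text{Ker}(\Theta_{c_p})$; since the author proved that every splitting map is injective when the quantum parameter equals $1$ \cite{wang2023stated}, we obtain $\text{Ker}(\Theta_{c_p}) = 0$ and hence $\Theta_p \colon S_n(\Sigma,1) \to S_n(\Sigma_p,1)$ is injective.

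Next I would form the square
\begin{equation*}
\begin{tikzcd}
S_n(\Sigma,1) \arrow[r, "\Theta_p"] \arrow[d, "\varphi_{\epsilon}"'] & S_n(\Sigma_p,1) \arrow[d, "\varphi_{\epsilon}"] \\
S_n(\Sigma,\epsilon) \arrow[r, "\Theta_p"] & S_n(\Sigma_p,\epsilon),
\end{tikzcd}
\end{equation*}
whose vertical arrows are the isomorphisms provided by Theorem \ref{c_iso} with base parameter $v=1$, applied to the two pb surfaces $\Sigma$ and $\Sigma_p$. This is legitimate precisely because $\epsilon^{2m}=1$ and $m \mid n$, the hypotheses of that theorem.

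The heart of the argument, and the step I expect to be the only genuine obstacle, is to verify that this square commutes. Because $\Theta_p(\alpha)=\alpha$ for every negatively ordered stated $n$-web diagram $\alpha$ — the map merely reinterprets $\alpha$ in $\Sigma_p$ after isotoping it off the monogon bounded by $c_p$ — the four quantities entering the definition of $\varphi_{\epsilon}$, namely $w(\alpha)$, $e(\alpha)$, $t(\alpha)$ and $p(\alpha)$, are all unchanged in passing from $\Sigma$ to $\Sigma_p$: the new ideal point carries no endpoints of $\alpha$, the crossings and the sinks and sources are untouched, and the height order of the original endpoints is unaffected. Consequently the scalar prefactor produced by Theorem \ref{c_iso} is identical whether $\alpha$ is regarded in $\Sigma$ or in $\Sigma_p$, so $\varphi_{\epsilon}\circ\Theta_p$ and $\Theta_p\circ\varphi_{\epsilon}$ agree on each diagram, and hence on all of $S_n(\Sigma,1)$.

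With commutativity in hand the conclusion is immediate: the top horizontal arrow is injective and both vertical arrows are isomorphisms, which forces the bottom arrow $\Theta_p \colon S_n(\Sigma,\epsilon) \to S_n(\Sigma_p,\epsilon)$ to be injective as well. The delicate point throughout is that $w$, $e$, $t$ and $p$ are honest invariants of the underlying web that $\Theta_p$ leaves fixed; once that is pinned down, everything else is formal.
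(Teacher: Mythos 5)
Your proposal is correct and is essentially the paper's own argument: the paper likewise reduces to the known injectivity of $\Theta_p$ at quantum parameter $1$ (via Corollary 6.8 of \cite{wang2023stated} together with Lemma \ref{lll}) and transports it through the commutative square whose vertical arrows are the isomorphisms $\varphi_{\epsilon}$ of Theorem \ref{c_iso}. Your only addition is spelling out why the square commutes (that $\Theta_p(\alpha)=\alpha$ leaves $w(\alpha)$, $e(\alpha)$, $t(\alpha)$ and $p(\alpha)$ unchanged, so the scalar prefactors agree), which the paper asserts directly from the definition of $\varphi_{\epsilon}$.
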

\begin{proof}
	From Theorem \ref{c_iso}, we know that there exist  linear isomorphisms
	$\varphi_{\epsilon}:S_n(\Sigma,1)\rightarrow S_n(\Sigma,\epsilon)$ and
	$\varphi_{\epsilon}:S_n(\Sigma_p,1)\rightarrow S_n(\Sigma_p,\epsilon)$. The definition of $\varphi_{\epsilon}$ shows that the following  diagram commutes: 
	\begin{equation}\label{com}
		\begin{tikzcd}
			S_n(\Sigma,1)  \arrow[r, "\varphi_{\epsilon}"]
			\arrow[d, "\Theta_p"]  
			&  S_n(\Sigma,\epsilon) \arrow[d, "\Theta_p"] \\
			S_n(\Sigma_p,1)  \arrow[r, "\varphi_{\epsilon}"] 
			&  S_n(\Sigma_p,\epsilon)\\
		\end{tikzcd}.
	\end{equation}
	Corollary 6.8 in \cite{wang2023stated} and Lemma \ref{lll} imply that $\Theta_p:S_n(\Sigma,1)\rightarrow S_n(\Sigma_p,1)$ is injective. 
	Then $\Theta_p:S_n(\Sigma,\epsilon)\rightarrow S_n(\Sigma_p,\epsilon)$ is also injective
	because the two $\varphi_{\epsilon}$ in diagram \eqref{com} are $R$-linear isomorphisms. 
\end{proof}

The following Theorem provides affirmative examples for Conjecture \ref{conj}.

\begin{theorem}
		Let $\Sigma$ be a pb surface with an ideal arc $c$, let $m,n$ be two positive integers with $m\mid n$, and let $\epsilon\in R$ with $\epsilon^{2m} = 1$. Then the splitting map $\Theta_{c}:S_n(\Sigma,\epsilon)\rightarrow S_n(\text{Cut}_c(\Sigma),\epsilon)$ is injective.
\end{theorem}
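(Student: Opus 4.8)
The plan is to reduce the statement to the injectivity of the point-splitting map $\Theta_p$, for which the work has already been carried out in Lemma~\ref{key}. Since $c$ is an ideal arc of $\Sigma$, its two endpoints lie in the set of ideal points of $\Sigma$; in particular $\Sigma$ possesses at least one ideal point $p$. This lets me bring both $\Theta_c$ and $\Theta_p$ into play as $R$-linear maps out of the common domain $S_n(\Sigma,\epsilon)$, so that comparing their kernels is meaningful.

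First I would invoke Lemma~\ref{lll} (Corollary 8.2 in \cite{le2021stated}), which asserts $\mathrm{Ker}(\Theta_p)=\mathrm{Ker}(\Theta_c)$ whenever $\Sigma$ has an ideal point $p$ and an ideal arc $c$. Thus it suffices to show $\mathrm{Ker}(\Theta_p)=0$. This is exactly the content of Lemma~\ref{key}, which under the hypotheses $m\mid n$ and $\epsilon^{2m}=1$ gives that $\Theta_p:S_n(\Sigma,\epsilon)\to S_n(\Sigma_p,\epsilon)$ is injective. Combining the two facts yields $\mathrm{Ker}(\Theta_c)=\mathrm{Ker}(\Theta_p)=0$, so $\Theta_c$ is injective, completing the argument.

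I do not expect a genuine obstacle at this stage: the essential difficulty has been displaced into Lemma~\ref{key}, whose proof transports the known injectivity at quantum parameter $1$ (Corollary 6.8 in \cite{wang2023stated}, together with Lemma~\ref{lll}) to the root-of-unity parameter $\epsilon$ through the commuting square built from the $R$-linear isomorphisms $\varphi_{\epsilon}$ of Theorem~\ref{c_iso}. The only point requiring a moment's care is confirming that $\Sigma$ indeed has an ideal point so that Lemma~\ref{lll} applies; this is automatic, since an ideal arc by definition has its endpoints among the ideal points of $\Sigma$.
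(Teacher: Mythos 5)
Your proposal is correct and follows exactly the paper's own argument: the paper proves this theorem by citing Lemmas \ref{lll} and \ref{key}, i.e., it identifies $\mathrm{Ker}(\Theta_c)$ with $\mathrm{Ker}(\Theta_p)$ and then uses the injectivity of $\Theta_p$ established via the isomorphisms $\varphi_\epsilon$ and the known case at quantum parameter $1$. Your additional remark that $\Sigma$ automatically has an ideal point since $c$ is an ideal arc is a valid (if routine) observation the paper leaves implicit.
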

\begin{proof}
	Lemmas \ref{lll} and \ref{key}.
\end{proof}

\begin{theorem}
	Let $\MN$ be a connected marked  3-manifold with $\cN\neq\emptyset$,  let $D$ be a properly embedded disk in $M$, and let $e$ be an open oriented interval in $D$. Suppose $\epsilon\in R$ such that $\epsilon^4 = 1$. We have that
	$\Theta_{(D,c)}:S_n(\M,\epsilon)\rightarrow S_n( \text{Cut}_{(D,e)}\MN,\epsilon$) is injective.
\end{theorem}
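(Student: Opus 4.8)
The plan is to reduce the assertion to Theorem \ref{inj}, exploiting the identification of the kernel of a splitting map with the kernel of an adding marking map. Since the substantive content has already been established in Theorem \ref{inj}, the argument here is short.

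First I would fix the location of an auxiliary marking. The boundary $\partial D$ lies in a single connected component $S$ of $\partial M$. I would choose an oriented open interval $e'\subset S$ whose closure is disjoint from $\overline{\cN}$, so that $(M,\cN')=(M,\cN\cup e')$ is again a marked $3$-manifold obtained from $\MN$ by adding one extra marking; this is always possible because a marking is an open interval that may be isotoped away from $\overline{\cN}$ inside $S$. Let $l_{e'}:S_n(\M,\epsilon)\to S_n(M,\cN',\epsilon)$ be the associated adding marking map.

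Next I would invoke Theorem 6.7 of \cite{wang2023stated}, recalled in the Remark following Lemma \ref{lll}: the kernel of the adding marking map coincides with the kernel of the splitting map whenever the added marking and the boundary of the splitting disk lie in the same component of $\partial M$. By construction $e'$ and $\partial D$ both lie in $S$, so $\text{Ker}(\Theta_{(D,e)})=\text{Ker}(l_{e'})$.

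Finally, since $\MN$ is connected with $\cN\neq\emptyset$ and $\epsilon^4=1$, part (1) of Theorem \ref{inj} shows that $l_{e'}$ is injective, i.e.\ $\text{Ker}(l_{e'})=0$. Combining the two equalities gives $\text{Ker}(\Theta_{(D,e)})=0$, so $\Theta_{(D,e)}$ is injective. The only steps requiring attention are the placement of $e'$ in the component $S$ containing $\partial D$ and the verification that Theorem 6.7 of \cite{wang2023stated} applies at the root of unity $\epsilon$; both are routine, the former being a small isotopy and the latter being already recorded in the Remark after Lemma \ref{lll}. The genuine difficulty of the result is therefore entirely absorbed into Theorem \ref{inj}, which in turn rests on Theorem \ref{t6.1} and the relations in \eqref{root}.
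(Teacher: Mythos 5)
Your proposal is correct and follows exactly the route of the paper, whose proof is the one-line citation ``Theorem \ref{inj} and Theorem 6.7 in \cite{wang2023stated}'': you place an auxiliary marking $e'$ in the boundary component containing $\partial D$, identify $\text{Ker}(\Theta_{(D,e)})$ with $\text{Ker}(l_{e'})$ via Theorem 6.7 of \cite{wang2023stated}, and conclude by the injectivity of $l_{e'}$ from Theorem \ref{inj}(1). The only difference is that you spell out the placement of $e'$ and the applicability checks that the paper leaves implicit, which is a faithful elaboration rather than a different argument.
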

\begin{proof}
	Theorem \ref{inj} and Theorem 6.7 in \cite{wang2023stated}.
\end{proof}

\begin{lemma}\label{lmis}
	Let $\MN$ be a marked 3-manifold, and let $(M,\mathcal{N}')$ be another marked 3-manifold obtained from $\MN$ by adding one extra marking $e$. Then the adding marking map $l_e: S_n(\M,-1)\rightarrow S_n(M,\cN',-1)$ is injective.
\end{lemma}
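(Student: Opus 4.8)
The plan is to reduce the statement at quantum parameter $v=-1$ to the already-known injectivity at $v=1$, transporting it through the linear isomorphism $\Psi_n$ of Theorem \ref{5.1}. Write $\cN' = \cN \cup e$. By Theorem \ref{5.1} there are $R$-linear isomorphisms $\Psi_n: S_n(\M,1) \to S_n(\M,-1)$ and $\Psi_n: S_n(M,\cN',1) \to S_n(M,\cN',-1)$, so the whole question becomes a formal comparison of the two adding marking maps across these isomorphisms.

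First I would invoke Corollary \ref{minus} with $v=1$, which furnishes the commutative square relating the adding marking maps at $v=1$ and $v=-1$ through the two vertical isomorphisms $\Psi_n$. Reading off commutativity (going right-then-down equals down-then-right), the adding marking map $l_e: S_n(\M,-1) \to S_n(M,\cN',-1)$ factors as
$$ l_e = \Psi_n \circ l_e^{(1)} \circ \Psi_n^{-1}, $$
where $l_e^{(1)}: S_n(\M,1) \to S_n(M,\cN',1)$ denotes the adding marking map at $v=1$ and the two outer $\Psi_n$'s are the isomorphisms above.

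Next I would appeal to the results of \cite{wang2023stated}: the splitting map is injective for every marked 3-manifold when $v=1$, and by Theorem 6.7 in \cite{wang2023stated} the kernel of the adding marking map coincides with the kernel of the associated splitting map. Hence $l_e^{(1)}$ is injective. Since $\Psi_n$ and $\Psi_n^{-1}$ are $R$-linear isomorphisms, the displayed factorization exhibits $l_e$ at $v=-1$ as a composition of injective maps, so it is injective, which is exactly the assertion of the lemma. This mirrors the structure of the proof of Lemma \ref{key}, with the isomorphism $\Psi_n$ of Theorem \ref{5.1} playing the role that $\varphi_{\epsilon}$ of Theorem \ref{c_iso} played there.

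The argument is essentially formal once these two ingredients are in place, so there is no serious obstacle; the only point requiring care is that both the isomorphism $\Psi_n$ (Theorem \ref{5.1}, together with its extension to generalized marked 3-manifolds recorded in the remark following it) and the $v=1$ injectivity hold in the generality of \emph{arbitrary} marked 3-manifolds rather than merely thickenings of pb surfaces. This is precisely the setting in which Theorem \ref{5.1}, Corollary \ref{minus}, and the cited results of \cite{wang2023stated} are stated, so no additional work is needed.
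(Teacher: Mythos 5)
Your proof is correct and takes essentially the same route as the paper: the commutative square of Corollary \ref{minus}, together with the $R$-linear isomorphisms $\Psi_n$ of Theorem \ref{5.1}, transports the known injectivity of the adding marking map at $v=1$ to $v=-1$. The only cosmetic difference is that the paper cites the $v=1$ injectivity of the adding marking map directly (Corollary 6.6 of \cite{wang2023stated}), whereas you rederive it from the $v=1$ injectivity of the splitting map via Theorem 6.7 of \cite{wang2023stated}; both routes are legitimate.
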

\begin{proof}
	From Corollary \ref{minus}, we have the following commutative diagram:
	\begin{equation*}
		\begin{tikzcd}
			S_n(\M,1)  \arrow[r, "l_e"]
			\arrow[d, "\Psi_n"]  
			&  S_n(M,\mathcal{N}\cup e,1) \arrow[d, "\Psi_n"] \\
			S_n(\M,-1)  \arrow[r, "l_e"] 
			&  S_n(M,\mathcal{N}\cup e,-1)\\
		\end{tikzcd},
	\end{equation*}
	where $\Psi_n$ is the linear isomorphism constructed in Theorem \ref{5.1}. Then $$l_e: S_n(\M,-1)\rightarrow S_n(M,\cN',-1)$$ is injective because $l_e: S_n(\M,1)\rightarrow S_n(M,\cN',1)$ is injective, Corollary 6.6 in \cite{wang2023stated}.
\end{proof}

\begin{theorem}
	Let $\MN$ be marked  3-manifold,  let $D$ be a properly embedded disk in $M$, and let $e$ be an open oriented interval in $D$.
	We have 
	$\Theta_{(D,c)}:S_n(\M,-1)\rightarrow S_n( \text{Cut}_{(D,e)}\MN,-1$) is injective.
\end{theorem}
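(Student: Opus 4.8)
The plan is to reduce the injectivity of the splitting map $\Theta_{(D,e)}$ to the injectivity of an adding marking map, following exactly the strategy used for the $\epsilon^4=1$ case above, but substituting Lemma \ref{lmis} for Theorem \ref{inj}. The advantage of Lemma \ref{lmis} is that it holds for \emph{arbitrary} marked 3-manifolds at $v=-1$, which is precisely what lets us discard the connectedness and $\cN\neq\emptyset$ hypotheses that were needed in the $\epsilon^4=1$ theorem.

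First I would observe that, since $D$ is properly embedded, its boundary $\partial D$ lies in a single component $X$ of $\partial M$. On the surface $X$ there is always room for a small oriented open interval $e''$ whose closure is disjoint from both $\overline{\cN}$ and $\partial D$; let $(M,\cN\cup e'')$ be the marked 3-manifold obtained from $\MN$ by adding $e''$, and let
\[
l_{e''}\colon S_n(\M,-1)\to S_n(M,\cN\cup e'',-1)
\]
be the corresponding adding marking map. Because $e''$ and $\partial D$ lie in the same component $X$ of $\partial M$, Theorem 6.7 in \cite{wang2023stated} applies and identifies the two kernels,
\[
\text{Ker}\big(\Theta_{(D,e)}\big)=\text{Ker}(l_{e''}).
\]

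It then remains only to show $\text{Ker}(l_{e''})=0$, which is exactly the content of Lemma \ref{lmis}: at $v=-1$ every adding marking map is injective. The reason Lemma \ref{lmis} is unconstrained is that it transports the known injectivity at $v=1$ (Corollary 6.6 in \cite{wang2023stated}) across the $R$-linear isomorphism $\Psi_n$ of Theorem \ref{5.1}, using the commuting square of Corollary \ref{minus}, and this transport imposes no hypothesis whatsoever on $\MN$. Hence $l_{e''}$ is injective, so $\text{Ker}(\Theta_{(D,e)})=0$ and $\Theta_{(D,e)}$ is injective.

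I do not expect a genuine obstacle, since both ingredients are already available; the only point requiring any care is verifying that the hypothesis of Theorem 6.7 in \cite{wang2023stated} can be satisfied, namely that the auxiliary marking $e''$ may always be placed on the \emph{same} boundary component as $\partial D$. This is immediate, as any boundary surface accommodates a small arc avoiding the finitely many existing constraints. In this sense the entire difficulty of the $v=-1$ statement has been absorbed into Lemma \ref{lmis}, whose reduction to $v=1$ is exactly what removes the connectedness and $\cN\neq\emptyset$ assumptions that the $\epsilon^4=1$ version must retain.
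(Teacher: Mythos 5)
Your proposal is correct and is essentially the paper's own proof, which cites exactly the two ingredients you use: Lemma \ref{lmis} for the injectivity of the adding marking map at $v=-1$, combined with Theorem 6.7 of \cite{wang2023stated} to identify $\mathrm{Ker}(\Theta_{(D,e)})$ with the kernel of an adding marking map for a marking placed on the same boundary component as $\partial D$. Your elaboration of the placement of the auxiliary marking $e''$ simply spells out what the paper leaves implicit.
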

\begin{proof}
	Lemma \ref{lmis} and Theorem 6.7 in \cite{wang2023stated}.
\end{proof}

\begin{conjecture}\label{Conj}
	Let $\MN$ be marked  3-manifold,  let $D$ be a properly embedded disk in $M$, and let $e$ be an open oriented interval in $D$. Suppose $\epsilon\in R$ such that $\epsilon^4 = 1$. We have that
	$\Theta_{(D,c)}:S_n(\M,\epsilon)\rightarrow S_n( \text{Cut}_{(D,e)}\MN,\epsilon$) is injective.
\end{conjecture}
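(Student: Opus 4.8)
The plan is to deduce this statement from two facts that are already available: the coincidence of the kernel of a splitting map with the kernel of an adding-marking map, and the injectivity of the adding-marking map at $v=-1$ (Lemma \ref{lmis}). Morally, the splitting map $\Theta_{(D,e)}$ detects how stated $n$-webs cross the disk $D$, and this information is already captured by the far simpler operation of pushing one extra marking onto the boundary component carrying $\partial D$. Once the problem is reduced to an adding-marking map, I would transport it to the injective $v=1$ situation through the linear isomorphism $\Psi_n$ of Theorem \ref{5.1}.

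First I would fix an extra marking $e''$ on the component of $\partial M$ containing $\partial D$, chosen disjoint from $\overline{\cN}$ and from $\partial D$, and form the adding-marking map $l_{e''}: S_n(\M,-1)\to S_n(M,\cN'',-1)$. Since $e''$ and $\partial D$ lie in the same component of $\partial M$, Theorem 6.7 in \cite{wang2023stated} applies and gives $\text{Ker}(\Theta_{(D,e)})=\text{Ker}(l_{e''})$, so that the injectivity of $\Theta_{(D,e)}$ is equivalent to that of $l_{e''}$. It therefore suffices to show $l_{e''}$ is injective at $v=-1$, which is precisely Lemma \ref{lmis}. For completeness I would recall its mechanism: specialising Corollary \ref{minus} at $v=1$ produces a commutative square whose horizontal arrows are the adding-marking maps at $v=1$ and at $v=-1$ and whose two vertical arrows are the $R$-linear isomorphisms $\Psi_n$ of Theorem \ref{5.1}. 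Because the adding-marking map at $v=1$ is injective by Corollary 6.6 in \cite{wang2023stated}, and the two vertical $\Psi_n$ are isomorphisms, a direct diagram chase forces $l_{e''}$ at $v=-1$ to be injective. Combining the two conclusions yields $\text{Ker}(\Theta_{(D,e)})=0$.

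The main point to stress is that this theorem carries no genuine obstacle of its own; all of the real work has been invested upstream. The substantive input is the construction of $\Psi_n$ for an arbitrary marked $3$-manifold in Theorem \ref{5.1}, whose verification requires the delicate sign bookkeeping through $e(\alpha)$, $t(\alpha)$ and $p(\alpha)$ so that each defining relation \eqref{w.cross}--\eqref{wzh.eight} is preserved under $v\mapsto -v$; this is complemented by the kernel-equality Theorem 6.7 and the $v=1$ injectivity Corollary 6.6 of \cite{wang2023stated}. The only care needed at this stage is to confirm that these cited inputs hold in full generality, with $M$ possibly disconnected and $\cN$ possibly empty, and to check that the passage from $v=1$ to $v=-1$ is compatible with the adding-marking map, which is exactly why Corollary \ref{minus} is recorded as an explicit intermediate step rather than used implicitly.
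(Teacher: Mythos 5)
There is a genuine gap: the statement you are asked to prove is Conjecture \ref{Conj}, which covers \emph{every} $\epsilon$ with $\epsilon^4=1$, including primitive fourth roots of unity ($\epsilon^2=-1$), for an arbitrary marked 3-manifold $\MN$ --- in particular one where $\cN=\emptyset$ or where some component of $M$ carries no marking. Your argument only treats $v=-1$ (and implicitly $v=1$): the reduction via Theorem 6.7 of \cite{wang2023stated} is fine, and the injectivity of the adding-marking map at $v=-1$ is exactly Lemma \ref{lmis}, proved by conjugating with $\Psi_n$. But $\Psi_n$ of Theorem \ref{5.1} only relates $S_n(\M,v)$ and $S_n(\M,-v)$, so specializing at $v=1$ reaches only $\epsilon=-1$; there is no analogous linear isomorphism in the paper from $S_n(\M,1)$ to $S_n(\M,\pm i)$ for a general marked 3-manifold. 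The isomorphisms $\varphi_\epsilon$ of Section 3, which do handle $\epsilon^{2m}=1$, are constructed only for thickenings of pb surfaces (they are defined via writhe-type counts on diagrams and have no stated extension to arbitrary $\MN$), and Theorem \ref{inj}, which handles primitive fourth roots, requires $M$ connected with $\cN\neq\emptyset$ so that the TQFT argument of Theorem \ref{t6.1} has an existing marking to work with.

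This is why the paper records the statement as a conjecture rather than a theorem: the known confirmations are precisely $\epsilon=\pm1$ (your argument, which coincides with the paper's proof of the preceding theorem) and $\epsilon$ a primitive fourth root of unity when every component of $M$ already carries a marking (via Theorem \ref{inj} and Theorem 6.7 of \cite{wang2023stated}). The genuinely open case --- and the one your proposal silently skips by conflating ``$\epsilon^4=1$'' with ``$v=\pm1$'' --- is the injectivity of the adding-marking map when a single marking is added to a component with no existing markings and $\epsilon^2=-1$; the paper itself points this out immediately after stating the conjecture. As written, your proof establishes a special case already proved in the paper, not the statement itself.
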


We have the following confirmative examples for Conjecture \ref{Conj}:
(1) $\epsilon = \pm 1$ (2) $\epsilon$ is a primitive 4-th root of unity and every component of $M$ contains at least marking. Thus, to prove Conjecture \ref{Conj}, it suffices to show that the adding marking map is injective when adding a single marking to a 3-manifold without any existing markings, assuming \( \epsilon \) is a primitive fourth root of unity.  

\bibliographystyle{plain}

\bibliography{ref.bib}

\end{document}